\theoremstyle{plain}
\newtheorem{theorem}[equation]{Theorem}
\newtheorem{lemma}[equation]{Lemma}
\newtheorem{proposition}[equation]{Proposition}
\theoremstyle{definition}
\newtheorem{definition}[equation]{Definition}
\newtheorem{condition}[equation]{Condition}
\theoremstyle{remark}
\newtheorem{remark}[equation]{Remark}
\newcommand{\capacity}{\operatorname{cap}}
\newcommand{\dist}{\operatorname{dist}}
\newcommand{\diam}{\operatorname{diam}}
\newcommand{\intr}{\operatorname{int}}
\numberwithin{equation}{section}
\newcommand{\bC}{\mathbb{C}}
\newcommand{\bR}{\mathbb{R}}
\providecommand{\set}[1]{\{#1\}}
\providecommand{\Set}[1]{\left\{#1\right\}}
\providecommand{\abs}[1]{\lvert#1\rvert}
\providecommand{\norm}[1]{\lVert#1\rVert}
\renewcommand{\vec}[1]{\boldsymbol{#1}}
\DeclareMathOperator*{\osc}{osc}
\begin{document}
\title[Dirichlet problem for elliptic equations in non-divergence form in $\bR^2$]
{The Dirichlet problem for second-order elliptic equations in non-divergence form with continuous coefficients: The two-dimensional case}

\author[H. Dong]{Hongjie Dong}
\address[H. Dong]{Division of Applied Mathematics, Brown University,
182 George Street, Providence, RI 02912, United States of America}
\email{Hongjie\_Dong@brown.edu}
\thanks{H. Dong was partially supported by the NSF under agreement DMS-2350129.}

\author[D. Kim]{Dong-ha Kim}
\address[D. Kim]{Department of Mathematics, Chung-Ang University, 84 Heukseok-ro, Dongjak-gu, Seoul 06974, Republic of Korea}
\email{kimdongha91@cau.ac.kr}

\author[S. Kim]{Seick Kim}
\address[S. Kim]{Department of Mathematics, Yonsei University, 50 Yonsei-ro, Seodaemun-gu, Seoul 03722, Republic of Korea}
\email{kimseick@yonsei.ac.kr}
\thanks{S. Kim is supported by the National Research Foundation of Korea (NRF) under agreement NRF-2022R1A2C1003322.}

\subjclass[2010]{Primary 35J25, 35J67}

\keywords{Green's function; Wiener test; elliptic equations in non-divergence form}

\begin{abstract}
This paper investigates the Dirichlet problem for a non-divergence form elliptic operator $L$ in a bounded domain of $\mathbb{R}^2$.
Assuming that the principal coefficients satisfy the Dini mean oscillation condition, we establish the equivalence between regular points for $L$ and those for the Laplace operator.
This result closes a gap left in the authors' recent work on higher-dimensional cases (Math. Ann. 392(1): 573--618, 2025).
Furthermore, we construct the Green's function for $L$ in regular two-dimensional domains, extending a result by Dong and Kim (SIAM J. Math. Anal. 53(4): 4637--4656, 2021).
\end{abstract}
\maketitle

\section{Introduction}

This article extends our recent work \cite{DKK25} to the two-dimensional setting.
In $\bR^2$, we consider the elliptic operator  $L$ given by
\[
Lu=  \sum_{i,j=1}^2 a^{ij} D_{ij}u+ \sum_{i=1}^2 b^i D_i u+cu.
\]
We assume that the coefficient matrix $\mathbf{A}=(a^{ij})$ is symmetric, satisfies the uniform ellipticity condition, and has Dini mean oscillation.
Additionally, we assume that the lower-order coefficients satisfy $\vec b=(b^1,b^2) \in L^{p_0}_{\rm loc}(\bR^2)$,  $c \in L^{p_0/2}_{\rm loc}(\bR^2)$, for some $p_0>2$, with $c \le 0$.

This paper focuses on the Dirichlet problem for the equation $Lu=0$ in $\Omega \subset \mathbb{R}^2$ with continuous boundary data $u=\varphi$ on $\partial \Omega$.
The two-dimensional case was not considered in \cite{DKK25}, primarily due to the fundamentally different nature of the Green's function in two dimensions. Compared to the case  $d \geq 3$, the Green's function in $\bR^2$ exhibits a logarithmic singularity at the pole, requiring a different approach (see \cite{DK09, DK21}).
Another key difference arises when comparing Green's functions in two and higher dimensions.
Consider the Green's function for the Laplace operator.
In $\bR^d$ for $d\ge 3$, the Green's function for the whole space provides a uniform upper bound for the Green's function in any bounded domain $\Omega \subset \bR^d$:
\[
G_\Omega(x,y) \le G_{\bR^d}(x,y) =\frac{1}{d(d-2)\omega_d} \,\frac{1}{\abs{x-y}^{d-2}}.
\]
This function, $G_{\bR^d}(x,y)$, is often referred to as the fundamental solution.
In contrast, no Green's function exists for the entire space $\bR^2$.
The well-known function,
 \[
 -\frac{1}{2\pi} \log \,\abs{x-y},
 \]
 is not actually the Green's function for the Laplace operator in $\bR^2$, as it changes sign and fails to provide a uniform upper bound for the Green's function in a domain $\Omega \subset \bR^2$.
This added complexity makes the analysis in the two-dimensional setting significantly more delicate.
On the other hand, since $\bR^2$ can be identified with $\bC$, many techniques from complex function theory, which are unavailable in higher dimensions, can be employed to study the Dirichlet problem for Laplace equations in $\bR^2$. See, for instance, the excellent book by Garnett and Marshall \cite{GM2005}.

A foundational result in two-dimensional potential theory for harmonic functions is rooted in the Riemann mapping theorem: if $\Omega \subset \mathbb{R}^2$ is the interior of a closed Jordan curve $\Gamma$, then every boundary point of $\Omega$ is regular. Riemann initially believed that this result held for all simply connected domains. However, his proof relied on the Dirichlet principle, which requires specific assumptions about the boundary (e.g., that it is a Jordan curve).
In 1900, Osgood provided a more precise topological criterion for regularity. He showed that a boundary point $z_0 \in \partial\Omega$ is regular if it belongs to a connected component of $\mathbb{R}^2 \setminus \Omega$ that contains at least one other point distinct from $z_0$.
His proof relies on the construction of a barrier function involving a branch of $\log (z-z_0)$.

A major breakthrough came in 1924 when Wiener established a celebrated necessary and sufficient condition for the regularity of a boundary point with respect to the Laplace operator. This condition, now known as Wiener's test, applies to general domains in $\bR^n$, including the two-dimensional case.
The study of regularity was further extended to more general elliptic equations. For divergence form elliptic equations, Littman, Stampacchia, and Weinberger \cite{LSW63} made a fundamental contribution by demonstrating the equivalence of regular boundary points for equations with bounded and measurable coefficients.
A comprehensive treatment of divergence form elliptic operators with possibly non-symmetric coefficients was later given by Gr\"uter and Widman \cite{GW82}.

However, the situation for non-divergence form equations is more delicate. For non-divergence form elliptic equations, the equivalence of regular boundary points with those of the Laplace equation has been established under progressively weaker assumptions on the coefficients.
Oleinik \cite{Oleinik} first proved this equivalence for $C^{3,\alpha}$ coefficients, while Herv\'e \cite{Herve} extended the result to H\"older continuous coefficients. Later, Krylov \cite{Krylov67} generalized the result to the case of Dini continuous coefficients.
Miller \cite{Miller} provided a counterexample demonstrating that the equivalence of regular boundary points can fail even when the coefficients are uniformly continuous (see also \cite{Bauman84a, KY17}).
Bauman \cite{Bauman85} extended Wiener's result to elliptic operators in non-divergence form with continuous coefficients, providing a Wiener-type criterion for the regularity of boundary points. However, this work did not offer new insights into the alignment of regular points for the operator and the Laplacian.
This highlights that, for non-divergence form equations, some form of modulus of continuity assumption on the coefficients is necessary to ensure equivalence.

We show that under the assumption that the coefficients of $L$ satisfy Conditions \ref{cond1} and \ref{cond2}, the regular points for $L$ coincide with those for the Laplace operator, as established in Theorem \ref{thm0800sat}.
This result enables us to prove the unique solvability of the Dirichlet problem with continuous boundary data in regular domains (Theorem \ref{thm0802sat}).
Furthermore, we construct the Green's function for $L$ in regular two-dimensional domains and establish pointwise bounds for it (Theorem \ref{thm1127sat}).
This represents a significant advancement, as the existence and estimates of Green's function were previously known only for operators without lower-order terms in $C^{2,\alpha}$ domains.
In $C^{1,1}$ domains, earlier results required the principal coefficients $\mathbf A$ to satisfy an $L^2$-Dini mean oscillation condition (see \cite{DK21}), which is more restrictive than the Dini mean oscillation condition used here and previously studied in \cite{DK17, DEK18}.
We also note that in higher dimensions, such a restrictive condition was not required (see \cite{HK20}), underscoring the additional technical challenges in the two-dimensional case.

The article is organized as follows. In Section \ref{sec2}, we introduce key concepts and construct Green's functions for two-dimensional balls, along with useful estimates. In Section \ref{sec3}, we define relative potential and capacity, establishing some of their fundamental properties. This section corresponds to Section 5 of \cite{DKK25}, but with notable adaptations for the two-dimensional setting. Finally, in Section \ref{sec4}, we establish the Wiener criterion (Theorem \ref{thm_wiener}) and provide the proofs of our main results.

\section{Main results}
The following theorems summarize our main results. While they are repeated from the main text, we present them here for improved readability and convenience.

\begin{theorem}[Theorem \ref{thm0800sat}]
Assume Conditions \ref{cond1} and \ref{cond2}, and let $\Omega$ be a bounded open domain in $\mathbb{R}^2$.
A point $x_0 \in \partial \Omega$ is a regular point for $L$ if and only if $x_0$ is a regular point for the Laplace operator.
\end{theorem}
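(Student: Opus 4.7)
The strategy is to obtain both implications simultaneously by comparing Wiener-type criteria for $L$ and for the Laplacian. By Theorem~\ref{thm_wiener}, which is established in Section~\ref{sec4}, the point $x_0 \in \partial\Omega$ is regular for $L$ if and only if a Wiener-type series of the form
\[
\sum_{k=0}^{\infty} \frac{\capacity_L\bigl(\Omega^c \cap A_k(x_0),\, B_{2^{-k+1}}(x_0)\bigr)}{\capacity_L\bigl(A_k(x_0),\, B_{2^{-k+1}}(x_0)\bigr)} = \infty
\]
diverges, where $A_k(x_0) = B_{2^{-k+1}}(x_0)\setminus \overline{B_{2^{-k}}(x_0)}$ and $\capacity_L$ is the relative $L$-capacity developed in Section~\ref{sec3}. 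The classical Wiener test gives the analogous characterization for the Laplacian in terms of the logarithmic capacity $\capacity_\Delta$. The theorem therefore reduces to proving that these two capacities are comparable, uniformly on small scales near $x_0$.

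The main step is to establish the two-sided bound
\[
C^{-1}\,\capacity_\Delta(K, B) \;\le\; \capacity_L(K, B) \;\le\; C\,\capacity_\Delta(K, B)
\]
for every compact $K \subset B = B_r(x_0)$ with $r$ below a fixed scale, with $C$ depending only on the ellipticity constants and the Dini modulus of $\mathbf A$. Such estimates are read off from matching pointwise bounds on the Green's functions $G_L$ and $G_\Delta$ of the ball $B$, together with the variational characterization of $\capacity_L$ in terms of $G_L$ proved in Section~\ref{sec3}. The construction and pointwise upper and lower bounds for $G_L$ are provided by Section~\ref{sec2}. The crucial input is Condition~\ref{cond2}: the Dini mean oscillation of $\mathbf A$ allows us to freeze the coefficients at $x_0$ and replace $L$ locally by the constant-coefficient operator $L_0 = a^{ij}(x_0) D_{ij}$, with an error term controlled by the Dini modulus. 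The Green's function of $L_0$ coincides, up to a positive multiplicative constant and the invertible linear change of variables $y \mapsto \mathbf A(x_0)^{-1/2} y$, with the two-dimensional logarithmic kernel, so the capacity associated to $L_0$ is uniformly comparable to $\capacity_\Delta$. The Dini estimate then propagates this comparison from $L_0$ to $L$ itself.

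The genuinely two-dimensional obstacle is that, unlike when $d \ge 3$, the logarithmic kernel is not a global upper bound for $G_L$ and $G_L$ fails to be positive in the whole plane, so one cannot work with a single fundamental solution. Instead the whole argument must be executed ball by ball, matching the Wiener scale $2^{-k}$ to the ball in which $G_L$ is constructed in Section~\ref{sec2}, and the comparison with the frozen operator must be kept quantitative in terms of the Dini modulus rather than via homogeneity. This is where the added delicacy of the Dini mean oscillation hypothesis is used, and it is the main reason the higher-dimensional arguments of \cite{DKK25} do not transfer verbatim. Once the capacity equivalence above is in place, the two Wiener series diverge simultaneously, yielding the asserted equivalence between regular points for $L$ and for the Laplace operator.
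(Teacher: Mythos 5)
Your proposal follows essentially the same route as the paper: invoke the Wiener test for $L$ from Theorem \ref{thm_wiener}, invoke the classical Wiener test for the Laplacian, and reduce to the two-sided comparability $\capacity(K,B_{4r}) \simeq \capacity^\Delta(K,B_{4r})$ of Theorem \ref{thm1939}, itself a consequence of the logarithmic two-sided Green's function bounds in Theorem \ref{thm_green_function}. Two small notational remarks: the paper's Wiener sum uses no normalization in the denominator (harmless in 2D since $\capacity(\overline B_{2^{-k}r_0},B_{2^{2-k}r_0}) \simeq 1$ by Proposition \ref{capa_B_r}), and the compact set must be compactly contained in the reference ball, so your annulus $A_k \subset B_{2^{-k+1}}$ should be paired with a strictly larger reference ball such as $B_{2^{2-k}}$ as the paper does.
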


\begin{theorem}[Theorem \ref{thm0802sat}]
Assume that Conditions \ref{cond1} and \ref{cond2} hold.
Let $\Omega$ be a bounded regular domain in $\mathbb{R}^2$.
For $f \in C(\partial\Omega)$, the Dirichlet problem,
\[
Lu=0 \;\text{ in }\;\Omega, \quad u=f \;\text{ on }\;\partial\Omega,
\]
has a unique solution $u \in W^{2, p_0/2}_{\rm loc}(\Omega) \cap C(\overline\Omega)$.
\end{theorem}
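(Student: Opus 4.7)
The plan is to deduce this theorem from Theorem \ref{thm0800sat} by combining the Perron method for existence, the Alexandrov--Bakelman--Pucci (ABP) maximum principle for uniqueness, and the continuous attainment of boundary values via barriers for $L$. Uniqueness is the easier half: given two solutions $u_1, u_2 \in W^{2,p_0/2}_{\rm loc}(\Omega) \cap C(\overline\Omega)$ with the same boundary data, their difference $w$ satisfies $Lw = 0$ in $\Omega$, vanishes continuously on $\partial\Omega$, and lies in $W^{2,p_0/2}_{\rm loc}(\Omega) \cap C(\overline\Omega)$. Since $c \le 0$, $\vec b \in L^{p_0}_{\rm loc}$ with $p_0 > 2$, and uniform ellipticity holds, the ABP maximum principle applied on a smooth exhaustion of $\Omega$ forces $w \equiv 0$.

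For existence I would use Perron's method. Let $\cS_f$ denote the family of continuous $L$-subsolutions in $\Omega$ that lie below $f$ on $\partial\Omega$, and define the Perron envelope $u(x) = \sup_{v \in \cS_f} v(x)$. The family is nonempty since any sufficiently negative constant is a subsolution (using $c \le 0$), and $u$ is bounded above via comparison against any $L$-supersolution dominating $f$ on $\partial\Omega$, so $u$ is finite on $\Omega$. The main step is to show that $u$ is $L$-harmonic in $\Omega$: for any $v \in \cS_f$ and ball $B \Subset \Omega$, the balayage $\tilde v$ obtained by replacing $v$ inside $B$ with the $L$-solution in $B$ carrying boundary data $v|_{\partial B}$ --- whose solvability and continuity up to $\partial B$ rest on the Green's function construction for balls in Section \ref{sec2} --- again belongs to $\cS_f$. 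A standard diagonal extraction, combined with the interior $W^{2,p}$ theory for $L$ under the Dini mean oscillation condition (from \cite{DK17, DEK18}) and the Harnack inequality, then yields $u \in W^{2,p_0/2}_{\rm loc}(\Omega)$ with $Lu = 0$ almost everywhere.

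Continuous attainment of the boundary data is where the equivalence of regular points is indispensable. Fix $x_0 \in \partial\Omega$; since $\Omega$ is regular with respect to the Laplacian, Theorem \ref{thm0800sat} yields that $x_0$ is also regular for $L$, so a local barrier $\beta$ for $L$ at $x_0$ exists, i.e., $\beta \in C(\overline\Omega \cap \overline{B_r(x_0)})$ with $L\beta \le 0$, $\beta(x_0) = 0$, and $\beta > 0$ elsewhere on $\overline\Omega \cap \overline{B_r(x_0)}$. Given $\varepsilon > 0$, the uniform continuity of $f$ together with a sufficiently large constant $K$ allow one to form the comparison functions $f(x_0) \pm \varepsilon \pm K\beta$, which sandwich the Perron envelope near $x_0$ and yield $\lim_{x \to x_0} u(x) = f(x_0)$. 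The main obstacle in the whole argument is the interior step --- justifying that $u$ has $W^{2,p_0/2}_{\rm loc}$ regularity and solves $Lu = 0$ in the strong sense --- since this requires passage to the limit in the non-divergence form equation and leans crucially on the Dini mean oscillation hypothesis via the estimates of \cite{DK17, DEK18}.
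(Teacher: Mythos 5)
Your proposal takes essentially the same route as the paper (which itself defers to the companion paper \cite{DKK25}, Theorem 6.16, but whose framework — Perron envelopes $\overline H_f = \underline H_f$, regular points, barriers — is exactly the one you invoke): existence via Perron's method with balayage in interior balls, continuous boundary attainment through barriers obtained from Theorem \ref{thm0800sat}, interior $W^{2,p_0/2}_{\rm loc}$ regularity from the Dini mean oscillation condition, and uniqueness from the maximum principle on an exhaustion. One small correction: for the interior $W^{2,p_0/2}$ estimates with $\vec b \in L^{p_0}_{\rm loc}$ and $c \in L^{p_0/2}_{\rm loc}$, the relevant reference is \cite{Krylov21} (as used in the proof of Theorem \ref{thm2.3}), not \cite{DK17, DEK18}, which concern Schauder-type $C^1/C^2$ and weak $(1,1)$ estimates.
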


\begin{theorem}[Theorem	\ref{thm1127sat}]
Under Conditions \ref{cond1} and \ref{cond2}, let $\Omega \subset \bR^2$ be a bounded regular domain contained in $\mathcal{B}$.
Then, there exists a Green's function $G(x,y)$ in $\Omega$, which satisfies the following pointwise bound:
\[
0\le G(x,y) \le C \left\{1+ \log \left(\frac{\diam \Omega}{\abs{x-y}}\right)\right\},\quad x\neq y \in \Omega.
\]
where $C$ is a constant depending only on $\lambda$, $\Lambda$, $p_0$, $\diam \mathcal{B}$.
\end{theorem}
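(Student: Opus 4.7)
\emph{Proof plan.} The plan is to construct $G$ by an interior exhaustion of $\Omega$, comparing with the Green's function on the ambient ball $\mathcal{B}$ built in Section~\ref{sec2}, and then passing to a monotone limit. This parallels the construction in \cite{DK21, DKK25}, but must respect the logarithmic nature of the 2D setting, where no global fundamental solution on $\bR^2$ is available.

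First, I would exhaust $\Omega$ from the inside by a sequence of smooth regular subdomains $\Omega_m \Subset \Omega_{m+1}$ with $\bigcup_m \Omega_m = \Omega$. For each $m$ and each pole $y \in \Omega_m$, define the averaged Green's function $v_\rho^{m,y}$ by solving the adjoint Dirichlet problem
\[
L^* v_\rho^{m,y} = \abs{B_\rho(y)}^{-1} \mathbbm{1}_{B_\rho(y)} \text{ in } \Omega_m, \qquad v_\rho^{m,y} = 0 \text{ on } \partial\Omega_m,
\]
whose solvability follows from Theorem~\ref{thm0802sat} applied to $L^*$. Interior $W^{2,p}$-theory and ABP-type estimates, together with the Dini mean oscillation of $\mathbf{A}$, yield uniform-in-$\rho$ bounds on $v_\rho^{m,y}$ away from $y$, and the limit $\rho \to 0$ produces $G_m(x,y)$ on $\Omega_m$ satisfying $L_x G_m(\cdot,y) = \delta_y$ distributionally with vanishing boundary values.

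To obtain the logarithmic upper bound, let $\tilde G$ denote the Green's function for $L$ on $\mathcal{B}$ from Section~\ref{sec2}, which satisfies $0 \le \tilde G(x,y) \le C\{1+\log(\diam\mathcal{B}/\abs{x-y})\}$. Since $\Omega_m \subset \mathcal{B}$ and $c \le 0$, the maximum principle applied to $\tilde G(\cdot,y) - G_m(\cdot,y)$—an $L$-supersolution away from $y$, nonnegative on $\partial\Omega_m$—yields $G_m \le \tilde G$ on $\Omega_m \times \Omega_m$. Similarly, comparison gives $G_m \le G_{m+1}$, so the monotone limit $G(x,y) := \lim_m G_m(x,y)$ exists and inherits both the nonnegativity and the desired pointwise bound (absorbing $\log(\diam\mathcal{B}/\diam\Omega)$ into the constant). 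That $G$ is the Green's function on $\Omega$, with vanishing trace at every boundary point, follows from Theorem~\ref{thm0800sat} (equivalence of regular points) and the uniqueness in Theorem~\ref{thm0802sat}.

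The principal obstacle is establishing the logarithmic upper bound on $\tilde G$ on $\mathcal{B}$ itself, which is carried out in Section~\ref{sec2}. Unlike the case $d \ge 3$, where the explicit fundamental solution on $\bR^d$ serves as a direct majorant, in two dimensions the sign-changing logarithm cannot play that role, so $\tilde G$ must be constructed and estimated intrinsically. Here the Dini mean oscillation assumption on $\mathbf{A}$ is essential: it pins down the precise logarithmic singularity at the pole through pointwise $C^1$-type estimates on solutions (cf.\ \cite{DEK18}). A secondary difficulty is accommodating the lower-order terms in the comparison step; the assumptions $c \le 0$ and $\vec b \in L^{p_0}_{\loc}$ with $p_0 > 2$ are precisely what permit the ABP estimate to preserve the maximum-principle comparison between $G_m$ and $\tilde G$, as well as the monotonicity $G_m \le G_{m+1}$.
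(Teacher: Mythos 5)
Your overall strategy (construct $G$ by comparison with a Green's function on a larger set, using the ball estimate from Section~\ref{sec2}) is on the right track, and the exhaustion argument and maximum-principle comparison are reasonable. But there is a concrete gap in how you obtain the stated bound. You compare $G_m$ with $\tilde G$, the Green's function on the \emph{big} ball $\mathcal{B}$, which gives $G(x,y) \le C\bigl(1+\log(\diam\mathcal{B}/\abs{x-y})\bigr)$, and then propose to ``absorb $\log(\diam\mathcal{B}/\diam\Omega)$ into the constant.'' This step fails: the theorem requires $C$ to depend only on $\lambda$, $\Lambda$, $p_0$, $\diam\mathcal{B}$ (and implicitly $\omega_{\mathbf A}$) — \emph{not} on $\diam\Omega$. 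Since
\[
1+\log\!\left(\frac{\diam\mathcal{B}}{\abs{x-y}}\right) = 1+\log\!\left(\frac{\diam\mathcal{B}}{\diam\Omega}\right)+\log\!\left(\frac{\diam\Omega}{\abs{x-y}}\right),
\]
the additive term $\log(\diam\mathcal{B}/\diam\Omega)$ is unbounded over families of small $\Omega \subset \mathcal{B}$, so it cannot be swept into a constant that is uniform in $\Omega$. In particular, your bound degenerates precisely when $\abs{x-y}$ is comparable to $\diam\Omega$, where the stated estimate must read $G(x,y)\le C$.

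The fix is the one the paper uses: choose a ball $B_r = B_r(x_0)\subset\mathcal{B}$ with $\Omega \subset B_r$ and $r$ comparable to $\diam\Omega$ (the paper takes $2r=\diam\Omega$), and compare $G_m$ against the Green's function of $L$ on \emph{that} ball $B_r$, using Theorem~\ref{thm2.3}. The decisive point — which your proposal does not exploit — is that the constant in the estimate $0\le G_{B_r}(x,y)\le C\bigl(1+\log(2r/\abs{x-y})\bigr)$ from \eqref{1553thu} is uniform in $r$ for all $B_r\subset\mathcal{B}$, depending only on $\lambda$, $\Lambda$, $p_0$, $\omega_{\mathbf A}$, and $\diam\mathcal{B}$; this uniformity is established in the proof of Theorem~\ref{thm2.3} by a scaling argument. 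With $2r \simeq \diam\Omega$, the comparison $G \le G_{B_r}$ then delivers exactly $G(x,y) \le C\bigl(1+\log(\diam\Omega/\abs{x-y})\bigr)$ with an $\Omega$-independent $C$. A secondary, minor point: you invoke Theorem~\ref{thm0802sat} ``applied to $L^*$'' to solve the adjoint problem on $\Omega_m$, but that theorem is about $L$, and the double-divergence-form adjoint requires the separate solvability theory of \cite{DEK18} (already cited in the proof of Theorem~\ref{thm2.3}); on smooth $\Omega_m$ this is available, so the issue is one of attribution rather than substance.
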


\begin{theorem}[Theorem	\ref{thm0738}]
Assume that Conditions \ref{cond1} and \ref{cond2} hold.
Let $\Omega$ be a bounded regular domain in $\mathbb{R}^2$, and let $f \in C(\partial \Omega)$.
Consider the Dirichlet problem
\[
Lu=0 \;\text{ in }\;\Omega, \quad u=f \;\text{ on }\;\partial\Omega,
\]
where $u \in W^{2, p_0/2}_{\rm loc}(\Omega) \cap C(\overline\Omega)$ is the solution.
\begin{enumerate}[leftmargin=*]
\item[(i)]
Suppose $x_0 \in \partial \Omega$ is in a connected component of $\mathbb{R}^2 \setminus \Omega$ that contains at least one other point distinct from $x_0$.
If $f$ is H\"older continuous at $x_0$, then $u$ is also H\"older continuous at $x_0$, possibly with a different H\"older exponent.
\item[(ii)]
Suppose there exists a constant $r_0>0$ such that every point $x_0 \in \partial \Omega$ is in a connected component of $\mathbb{R}^2 \setminus \Omega$ that contains a point at least  $r_0$ away from $x_0$.
If $f \in C^{\beta}(\partial\Omega)$ for some $\beta \in (0,1)$, then $u \in C^\alpha(\overline\Omega)$ for some $\alpha \in (0,\beta)$.
\end{enumerate}
\end{theorem}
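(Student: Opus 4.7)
The plan is to combine Theorems \ref{thm0800sat} and \ref{thm0802sat} (which already deliver a unique solution $u\in W^{2,p_0/2}_{\rm loc}(\Omega)\cap C(\overline\Omega)$) with the quantitative form of the Wiener criterion Theorem \ref{thm_wiener}. Only a H\"older modulus of continuity at boundary points needs to be established, and the key input is a Frostman-type lower bound for the relative $L$-capacity arising from the geometric hypotheses.

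Let $K$ denote the connected component of $\mathbb{R}^2\setminus\Omega$ containing $x_0$ and set $\delta=\diam K$. Since $K$ is a planar continuum containing $x_0$, for every $r\in(0,\delta]$ the component of $K\cap\overline{B(x_0,r)}$ containing $x_0$ has diameter at least $r$, so its logarithmic capacity is $\gtrsim r$. I plan to transfer this bound to the relative capacity associated with the adjoint of $L$ (introduced in Section \ref{sec3}) using the two-sided Green's function estimates from Section \ref{sec2} and Theorem \ref{thm1127sat}. Inserted into the dyadic Wiener sum at scales $r_k=2^{-k}r_0$ with $r_0\le\delta$, this yields a lower bound $\gtrsim 1$ on each term, which by the quantitative Wiener criterion translates into a geometric oscillation decay
\[
\osc\bigl(u,\Omega\cap B(x_0,r_k)\bigr)\le C\,2^{-\alpha k}\osc\bigl(u,\Omega\cap B(x_0,r_0)\bigr)+\sup_{\partial\Omega\cap B(x_0,2r_k)}\abs{f-f(x_0)},
\]
with $\alpha>0$ depending on $\lambda,\Lambda,p_0,\diam\Omega$, and the Dini modulus of $\mathbf A$.

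For part (i), since $f$ is H\"older at $x_0$ with exponent $\beta$, the boundary term is $O(r_k^\beta)$; iterating gives $\abs{u(x)-f(x_0)}\lesssim\abs{x-x_0}^{\min(\alpha,\beta)}$, and the final exponent is controlled by the operator rather than by $f$, explaining the ``possibly different'' clause. For part (ii), the uniform geometric hypothesis produces the same capacity lower bound with constants uniform in $x_0\in\partial\Omega$ and in scales $r\le r_0$, so the boundary H\"older estimate holds uniformly with a single exponent $\alpha$ and constant $C$. Pairing this uniform boundary H\"older estimate with the scale-invariant interior Krylov--Safonov H\"older estimate on $B(x,\tfrac12\dist(x,\partial\Omega))$ and chaining interior points to their nearest boundary projections yields $u\in C^{\alpha'}(\overline\Omega)$ for any $\alpha'\in(0,\min(\alpha,\beta))$.

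The main obstacle is the quantitative step: upgrading the qualitative Wiener criterion to a dyadic geometric decay requires showing that each relative capacity of $K\cap\overline{B(x_0,2^{-k}r_0)}$ adapted to $L$ is bounded below by a \emph{fixed} positive constant independent of $k$. Carrying the planar logarithmic-capacity lower bound through the comparison with Green's functions that themselves carry a $\log$-type singularity demands careful tracking of constants and is the step in which the two-dimensional setting differs most sharply from the higher-dimensional case of \cite{DKK25}. Once this capacity bound is secured, the iteration producing pointwise H\"older regularity and the passage to global H\"older regularity are standard.
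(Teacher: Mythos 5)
Your high-level scaffolding matches the paper's: boundary H\"older regularity is extracted from the quantitative dyadic oscillation decay (the paper's Lemma \ref{lem2119thu} and Lemma \ref{lem_bdyest}, repackaged as Proposition \ref{prop_holder}) once a scale-uniform lower bound on the relative capacities $\capacity(\overline{B_{2^{-k}r_0}}\setminus\Omega, B_{2^{2-k}r_0})$ is in hand, and the global statement then follows by chaining boundary and interior estimates. One small misstatement: the relative capacity in Section \ref{sec3} is built from the Green's function of $L$, not of $L^*$. Also, the paper's interior estimate in Proposition \ref{prop_holder} comes from $W^{2,p_0/2}_{\rm loc}$ regularity and Sobolev embedding (exponent $2-4/p_0$) rather than Krylov--Safonov; either works for the chaining, though the Sobolev route is natural here since the Dini condition already puts the solution in $W^{2,p}_{\rm loc}$.

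Where you diverge substantively from the paper, and where your argument is genuinely incomplete, is the key capacity lower bound for a continuum. You propose to import the classical estimate that a plane continuum of diameter $r$ has logarithmic capacity $\gtrsim r$, then convert logarithmic capacity to the relative capacity $\capacity^\Delta(\cdot, B_{4r})$ (giving a constant of order $1/\log 16$), and finally use Theorem \ref{thm1939} to pass to the $L$-capacity. This chain of comparisons is sound in principle but none of it is carried out, and you explicitly flag it as ``the main obstacle.'' The paper instead gives a self-contained proof that bypasses the Robin-constant-to-relative-capacity conversion entirely: it shows via the circular-projection contraction inequality $G(f(x),f(y))\ge G(x,y)$ (a Green's function analogue of \cite[Theorem 5.5.9]{AG2001}, verified here by elementary plane geometry for the explicit ball Green's function) that the relative $\Delta$-capacity of a continuum joining $x_0$ to a point at distance $r$ dominates that of the projected line segment $I$ of length $r$, and then Proposition \ref{prop0115} bounds $\capacity^\Delta(I,B_{4r})$ from below by placing three congruent copies of $I$ as sides of an equilateral triangle and comparing with the inscribed disk via subadditivity, boundary invariance, and the ball capacity estimate of Proposition \ref{capa_B_r}. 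To complete your proof you would need either to reproduce that projection argument (or cite the continuum capacity bound and carry out the Robin-constant comparison) and then invoke Theorem \ref{thm1939}; as written, the heart of the theorem is still missing.
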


\section{Preliminary}			\label{sec2}

We choose an open ball $\mathcal{B}=B_{2R}(0)$ such that $B_R(0) \Supset \overline \Omega$, where $\Omega$ is a domain under consideration.
Throughout the paper, this ball $\mathcal{B}$ remains fixed. 
Also, we adopt the standard summation convention over repeated indices.

\begin{condition}			\label{cond1}
The coefficients of $L$ are measurable and defined in the whole space $\bR^2$.
The principal coefficients matrix $\mathbf{A}=(a^{ij})$ is symmetric and satisfies the ellipticity condition:
\[
\lambda \abs{\xi}^2 \le \mathbf{A}(x) \xi  \cdot \xi \le \lambda^{-1} \abs{\xi}^2,\quad \forall x \in \bR^2,\;\;\forall \xi \in \bR^2,
\]
where $\lambda \in (0,1]$ is a constant.
The lower-order coefficients $\vec b=(b^1, b^2)$ and $c$ belong to $L^{p_0/2}_{\rm loc}(\bR^2)$ and $L^{p_0}_{\rm loc}(\bR^2)$ for some $p_0>2$, and
\[
\norm{\vec b}_{L^{p_0}(\mathcal{B})}+ \norm{c}_{L^{p_0/2}(\mathcal{B})} \le \Lambda,
\]
where $\Lambda=\Lambda(\mathcal{B})<\infty$.
Additionally, we assume that $c\le 0$.
\end{condition}

\begin{condition}			\label{cond2}
The mean oscillation function $\omega_{\mathbf A}: \bR_+ \to \bR$ defined by
\[
\omega_{\mathbf A}(r):=\sup_{x\in \mathcal{B}} \fint_{\mathcal{B} \cap B_r(x)} \,\abs{\mathbf A(y)-\bar {\mathbf A}_{x,r}}\,dy, \;\; \text{where }\bar{\mathbf A}_{x,r} :=\fint_{\mathcal{B} \cap B_r(x)} \mathbf A,
\]
satisfies the Dini condition, i.e.,
\[
\int_0^1 \frac{\omega_{\mathbf A}(t)}t \,dt <+\infty.
\]
\end{condition}

We begin by constructing the Green's function for the operator $L$ and establishing its pointwise estimate in two-dimensional balls. Notably, the following theorem applies to Green's function for all balls contained in $\mathcal{B}$, including $\mathcal{B}$ itself.  

\begin{theorem}	\label{thm2.3}
Assume that Conditions \ref{cond1} and \ref{cond2} hold.
Let $B_r=B_r(x_0) \subset \mathcal{B}$.
Then, there exists a Green's function $G(x,y)$ of $L$ in $B_r$ and the Green's function is unique in the following sense: if $v$ is the unique adjoint solution of the problem
\begin{equation}				\label{eq1738sun}
L^*v=D_{ij}(a^{ij}v)-D_i(b^iv)+cv=f\;\text{ in }\;B_r,\quad v=0\;\text{ on }\;\partial B_r,
\end{equation}
where $f \in L^p(B_r)$ with $p>1$, then $v$ is represented by
\[
v(y)=-\int_{B_r} G(x,y) f(x)\,dx.
\]
Green's function $G(x,y)$ satisfies the pointwise estimate:
\begin{equation}			\label{1553thu}
0 \le G(x,y) \le C\left(1+ \log \frac{2r}{\abs{x-y}}\right),\quad  x \neq y \in B_r,
\end{equation}
where $C$ depends only on $\lambda$, $\Lambda$, $p_0$, $\omega_{\mathbf A}$, and $\diam \mathcal{B}$.
Moreover, the function
\[
G^*(x,y):=G(y,x)
\]
is the Green's function for the adjoint operator $L^*$.
\end{theorem}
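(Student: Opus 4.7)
The plan is to follow the strategy of \cite{DK21}, but adapted so that only the Dini mean oscillation condition (Condition \ref{cond2}) is required rather than the stronger $L^2$-Dini condition used there. The construction proceeds by approximation, with the adjoint equation \eqref{eq1738sun} serving as the defining relation.

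\textbf{Step 1 (Approximation and definition via the adjoint).} Mollify the coefficients to obtain smooth $(\mathbf{A}_k,\vec b_k,c_k)$ that retain the ellipticity bounds, the $L^{p_0}$/$L^{p_0/2}$ bounds, the sign condition $c_k\le 0$, and the Dini mean oscillation modulus $\omega_{\mathbf A}$ up to a uniform constant. Denote the corresponding operator by $L_k$. For $y\in B_r$ and small $\rho>0$, let $v^{(k)}_{y,\rho}\in W^{2,p}(B_r)\cap W^{1,p}_0(B_r)$ solve
\[
L_k^* v^{(k)}_{y,\rho}=\frac{1}{\abs{B_\rho(y)}}\chi_{B_\rho(y)}\text{ in }B_r,\qquad v^{(k)}_{y,\rho}=0\text{ on }\partial B_r,
\]
where solvability and the duality pairing with $L_k u=f$ is standard for smooth coefficients. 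Since $c_k\le 0$, the ABP/maximum principle for $L_k$ together with the duality formula $\int v^{(k)}_{y,\rho}f=\fint_{B_\rho(y)}u$ applied to $Lu=f\ge 0$ gives $v^{(k)}_{y,\rho}\ge 0$. Define the approximate Green's function $G_k(x,y):=\lim_{\rho\to 0}v^{(k)}_{y,\rho}(x)$, the existence of the limit being part of the argument.

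\textbf{Step 2 (Uniform logarithmic bound --- the main step).} The heart of the proof is to show
\[
0\le v^{(k)}_{y,\rho}(x)\le C\Bigl(1+\log\frac{2r}{\abs{x-y}}\Bigr)\qquad \text{for }\abs{x-y}\ge 2\rho,
\]
uniformly in $k$ and $\rho$. Two ingredients are combined. First, one needs a non-scale-invariant local $L^\infty$ bound for nonnegative adjoint solutions: on any ball $B_s(z)\subset B_r$ with $4s\le \abs{z-y}$ one has $v^{(k)}_{y,\rho}(z)\le C\fint_{B_{2s}(z)}v^{(k)}_{y,\rho}$. For operators with Dini mean oscillation, this follows from the adjoint pointwise/$L^\infty$ results of \cite{DK17, DEK18} (transferred to $L_k^*$ uniformly in $k$ via $\omega_{\mathbf A}$). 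Second, one needs an integral growth estimate on the annuli $A_j=B_{2^{-j}r}(y)\setminus B_{2^{-j-1}r}(y)$, obtained by testing the adjoint equation against a solution of $Lu=\chi_{A_j}$ in $B_r$ with zero boundary data and using the ABP bound $\norm{u}_\infty\le C$. Iterating the local $L^\infty$ bound over a geometric chain of balls from a point $x$ with $\abs{x-y}=2^{-j}r$ out to $\partial B_r$ where $v^{(k)}_{y,\rho}=0$ produces the additive logarithmic growth, because each doubling step contributes a bounded additive constant (a standard Harnack-chain accounting). Equivalently, and this is the version that makes the Dini MO hypothesis strictly necessary, one freezes $\mathbf A_k$ at $y$ and compares $v^{(k)}_{y,\rho}$ with an explicit log-kernel built from $\bar{\mathbf A}_{y,\rho}$, absorbing the error $\mathbf A_k-\bar{\mathbf A}_{y,\rho}$ via dyadic summation of $\omega_{\mathbf A}(2^{-j}r)$ --- finiteness of this sum is precisely Condition \ref{cond2}. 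This is where the two-dimensional situation is more delicate than $d\ge 3$: the model kernel $-\frac{1}{2\pi}\log\abs{x-y}$ changes sign, so the comparison must be done inside $B_r$ against $\log(2r/\abs{x-y})$, and the perturbation error must be summable without an $L^2$ moment on $\omega_{\mathbf A}$.

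\textbf{Step 3 (Limit, uniqueness, and adjoint).} The uniform bound from Step 2 together with interior $W^{2,p}$ and $C^1$ estimates for $L_k^*$ (with $p<p_0$) give equicontinuity of $v^{(k)}_{y,\rho}$ on compact subsets of $B_r\setminus\{y\}$. A diagonal extraction as $k\to\infty$ and $\rho\to 0$ produces $G(x,y)$ satisfying \eqref{1553thu}. Passing to the limit in the duality identity $\int_{B_r}v^{(k)}_{y,\rho}f=\fint_{B_\rho(y)}u_k$, where $L_k u_k=f$, $u_k=0$ on $\partial B_r$, yields the representation formula $v(y)=-\int_{B_r}G(x,y)f(x)\,dx$ (the sign coming from the adjoint sign convention). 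Uniqueness of $v$ in \eqref{eq1738sun} under Condition \ref{cond1}--\ref{cond2} forces uniqueness of $G$. For the adjoint Green's function, run the same construction with $L$ and $L^*$ interchanged to obtain $\tilde G(x,y)$; integrating by parts in the pair of representation formulas (justified since $W^{2,p}$-solutions are admissible test functions and boundary terms vanish) gives $\tilde G(x,y)=G(y,x)$.

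\textbf{Main obstacle.} Step 2 is clearly the hard part: in two dimensions the model kernel is sign-indefinite, so a direct domination of $v^{(k)}_{y,\rho}$ by a single explicit function is unavailable, and one must carefully track how the perturbation error accumulates dyadically around the pole. Achieving the bound using only the Dini (and not $L^2$-Dini) mean oscillation condition requires that the adjoint $L^\infty$ bound from \cite{DK17,DEK18} be strong enough on annuli with the Dini modulus alone; this is the point where the restriction to balls $B_r\subset\mathcal{B}$ (rather than general $C^{1,1}$ domains) is used.
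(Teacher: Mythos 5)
Your proposal takes a genuinely different route from the paper, but it is built on a misreading of the scope of \cite{DK21} that makes the proposed Step 2 unnecessarily heavy. The paper's proof does \emph{not} re-derive the logarithmic bound for the full operator $L$. Instead, it observes that \cite{DK21} already constructs the Green's function $G_0$ for the leading-order operator $L_0=a^{ij}D_{ij}$ in $C^{2,\alpha}$ domains --- which includes balls --- under the Dini mean oscillation assumption alone (the $L^2$-Dini hypothesis in \cite{DK21} is only needed for $C^{1,1}$ domains). With $G_0$ and its bounds \eqref{bound1939mon} in hand, the paper treats the lower-order terms as a perturbation: for each $y$ it solves $Lu^y=-b^iD_iG_0(\cdot,y)-cG_0(\cdot,y)$ in $B_r$ with zero boundary data (using Krylov's $W^{2,p}$ theory \cite{Krylov21}), gets a uniform $L^\infty$ bound on $u^y$ from \eqref{eq1818sat} and Sobolev embedding, sets $G:=G_0+u^y$, and verifies the representation by pairing the adjoint solution $v$ against $u^y$ and the approximate Green's function $G_0^\epsilon$. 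The only weak limit that needs to be passed is for $G_0^\epsilon$ (Lemma \ref{lem1631wed}), which is imported from \cite{DK21,DKK25}.

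By contrast, you propose to mollify all the coefficients, solve the adjoint with concentrated right-hand side, and re-establish the logarithmic bound for the full operator $L$ directly by a dyadic freezing argument around the pole. That is not wrong in spirit, but your Step 2 --- which you correctly identify as the hard part --- is essentially a re-proof of the main theorem of \cite{DK21}, and it is the only part of your sketch that is not actually filled in. In particular, the claim that a Harnack-chain/additive-log accounting combined with freezing and Dini summation produces the bound is plausible but would need a full argument along the lines of \cite{DK21}; without it the proof is incomplete precisely where it matters. The paper's perturbation route avoids this entirely: the logarithmic singularity and the Dini-vs-$L^2$-Dini subtlety are confined to the coefficient-frozen operator $L_0$ in a ball, where \cite{DK21} already applies, and the lower-order terms only contribute a bounded correction $u^y$. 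If you want to keep your construction-from-scratch approach, you should at minimum note that the two-dimensional logarithmic bound for $L_0$ in balls under Dini MO is already in \cite{DK21} (so Step 2 can simply cite that result, reducing your argument to the paper's perturbation scheme), or else you must supply the full dyadic comparison argument, which would considerably lengthen the proof.
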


\begin{proof}
We adapt the proof of \cite[Theorem 4.1]{DKK25}.
In \cite{DK21}, Green's functions for the operator $L_0:= a^{ij}D_{ij}$  are constructed in 
$C^{2,\alpha}$ domains, which in particular include $B_r$.
To construct the Green's function for the operator $L=a^{ij}D_{ij}+b^i D_i+c$, we consider the following problem for each $y \in B_r$:
\begin{equation}			\label{eq1123thu}
Lu= -b^iD_i G_0(\cdot,y) -c G_0(\cdot,y)\;\text{ in }\;B_{r},\quad u=0 \;\text{ on }\;\partial B_{r},
\end{equation}
where $G_0(x, y)$ is the Green's function for $L_0$ in $B_r$.
We invert the sign of the Green's function $G_0$ so that
\[
L_0 G_0(\,\cdot,y)=-\delta_y\;\text{ in }\;B_r \quad\text{and}\quad G_0(\,\cdot,y)=0 \;\text{ on }\;\partial B_r.
\]
This ensures that $G_0$ is nonnegative.

Note that the Green's function estimates from \cite{DK21} yield
\begin{equation}	\label{bound1939mon}
G_0(x,y) \le C\left(1+ \log \frac{2r}{\abs{x-y}}\right) \le C\left(1+ \log \frac{\diam \mathcal B}{\abs{x-y}}\right),\quad
\abs{D_xG_0(x,y)} \le \frac{C}{\abs{x-y}},
\end{equation}
where $C$ depends only on $\lambda$, $\omega_{\mathbf A}$, and $\diam \mathcal{B}$.
While \cite{DK21} states that $C$ also depends on the domain (which, in our case, is $B_r$), a scaling argument shows that this dependence arises solely through $\diam \mathcal{B}$, which provides an upper bound for $r$.

Note that $G(\,\cdot,y) \in L^p(B_r)$ for all $p<\infty$ and $D G(\,\cdot, y) \in L^p(B_r)$ for all $p<2$. 
Thus, there exists $p_1>1$, determined by $p_0$, such that for each $y \in B_r$, we have
\begin{equation}			\label{eq1818sat}
\norm{b^i D_iG_0(\,\cdot,y)}_{L^{p_1}(B_r)} + \norm{cG_0(\,\cdot, y)}_{L^{p_1}(B_r)} \le C,
\end{equation}
where $C$ depends only on $\lambda$, $\Lambda$, $p_0$, $\omega_{\mathbf A}$, and $\diam \mathcal{B}$.
Therefore, by \cite[Theorem 4.2]{Krylov21}, there exists a unique solution
\[
u=u^y \in W^{2,p_1}(B_r)\cap W^{1,p_1}_0(B_r)
\]
to the problem \eqref{eq1123thu}.
Applying the Sobolev embedding theorem and $L^p$ estimates, we deduce from \eqref{eq1818sat} that 
\begin{equation}	\label{eq1410thu}
\norm{u^y}_{L^\infty(B_r)} \le \norm{u^y}_{W^{2,p_1}(B_r)}\le C,
\end{equation}
where $C$ depends only on $\lambda$, $\Lambda$, $p_0$, $\omega_{\mathbf A}$, and $\diam \mathcal{B}$.
In particular, $C$ is uniform for all $r$ and $y$.
Now, we will demonstrate that
\[
G(x,y):=G_0(x,y)+u^y(x)
\]
serves as the Green's function for $L$ in $B_r$.
For any $f \in C^\infty_c(B_r)$, let $v \in L^{p_1'}(B_r)$ be the solution of \eqref{eq1738sun}.
By \cite[Theorem 1.8]{DEK18}, we find that $v \in C(\overline B_r)$.
Moreover, from the definition of the solution to the problem \eqref{eq1738sun}, we have
\begin{equation}			\label{eq1428wed}
\int_{B_r} f w = \int_{B_r} v Lw,\quad \forall w \in  W^{2,p_1}(B_r)\cap W^{1,p_1}_0(B_r).
\end{equation}
Since $u^y \in W^{2,p_1}(B_r)\cap W^{1,p_1}_0(B_r)$ is a solution of \eqref{eq1123thu}, it follows that
\begin{equation}			\label{eq1805sun}
\int_{B_r} f u^y = \int_{B_r} v Lu^y = -\int_{B_r} b^i D_i G_0(\,\cdot, y)v - \int_{B_r} c G_0(\,\cdot, y)v.
\end{equation}
On the other hand, taking  $w=G_0^\epsilon(\,\cdot, y)$ in \eqref{eq1428wed}, where $G_0^\epsilon(\,\cdot, y)$ is the approximate Green's function of $L_0$ in $B_r$, i.e.,
\[
L_0 G_0^\epsilon(\,\cdot,y)=-\frac{1}{\abs{B_{\epsilon}(y)}} \mathbbm{1}_{B_{\epsilon}(y)}\;\text{ in }\;B_{r},\quad G^{\epsilon}_0(\,\cdot,y)=0 \;\text{ on }\;\partial B_{r},
\]
as considered in \cite{DK21} (with the sign inverted), we obtain 
\begin{multline}			\label{eq2103sat}
\int_{B_r} f G_0^\epsilon(\,\cdot,y)=\int_{B_r} v L G_0^\epsilon(\,\cdot,y)\\
=-\fint_{B_\epsilon(y)\cap B_r} v + \int_{B_r} b^i D_i G_0^\epsilon(\,\cdot, y)v+\int_{B_r} c G_0^\epsilon(\,\cdot, y)v.
\end{multline}

Next, we present a lemma analogous to \cite[Lemma 4.8]{DKK25}.
\begin{lemma}		\label{lem1631wed}
There exists a sequence $\{\epsilon_k\}$ converging to zero such that
\begin{align*}
G^{\epsilon_k}_0(\,\cdot, y) &\rightharpoonup G_0(\,\cdot, y) \;\text{ weakly in } L^p(B_r)\; \text{ for }\; 1<p<\infty,\\
D G^{\epsilon_k}_0(\,\cdot, y) &\rightharpoonup DG_0(\,\cdot, y) \;\text { weakly in } L^p(B_r)\; \text{ for }\; 1<p<2.
\end{align*}
\end{lemma}

\begin{proof}
The first part follows from the following fact from \cite{DK21}:
\[
G^{\epsilon_k}_0(\,\cdot, y) \rightharpoonup G_0(\,\cdot, y) \;\text{in the weak-$*$ topology of }\; \mathrm{BMO}(B_r).
\]
For the proof for the second part, refer to \cite[Lemma 4.8]{DKK25}.
\end{proof}

Therefore, by taking the limit $\epsilon_k \to 0$ in \eqref{eq2103sat}, we obtain
\begin{equation}			\label{eq1806sun}
\int_{B_r} f G_0(\,\cdot,y) =-v(y) + \int_{B_r} b^i D_i G_0(\,\cdot, y)v+\int_{B_r} c G_0(\,\cdot, y)v,
\end{equation}
where we utilized Lemma \ref{lem1631wed} and the fact that  $v \in C(\overline{\mathcal{B}})$.
By combining \eqref{eq1805sun} and \eqref{eq1806sun}, we deduce
\[
\int_{B_r} f G(\,\cdot, y)= \int_{B_r} f G_0(\,\cdot, y) + \int_{B_r} f u^y= -v(y).
\]
Therefore, we conclude that $G(x, y)$ is the Green's function for $L$ in $B_r$.

Combining \eqref{bound1939mon} with the uniform bound of $u^y$ from \eqref{eq1410thu}, we obtain the pointwise estimate \eqref{1553thu}.
Moreover,
\[
G^*(x,y):=G(y,x)
\]
is the Green's function for $L^*$ in $B_r$.
See \cite[Theorem 4.1]{DKK25} for the details.
\end{proof}

We also obtain the lower bound for the Green's function when $x$ and $y$ are sufficiently far away from the boundary.

\begin{theorem}	\label{thm_green_function}
Assume that Conditions \ref{cond1} and \ref{cond2} hold.
Let $B_{4r}=B_{4r}(x_0) \subset \mathcal{B}$, and let $G(x, y)$ be the Green's function for $L$ in $B_{4r}$.
There exists a constant $C_0>1$, depending only on $\lambda$, $\Lambda$, $\omega_{\mathbf A}$, and $\diam \mathcal B$, such that the following estimate holds:
\begin{equation}		\label{estimate_green}
\frac{1}{C_0} \log \left(\frac{3r}{\abs{x-y}}\right) \le G(x,y)\le C_0 \log \left(\frac{3r}{\abs{x-y}}\right),\quad x \neq y \in \overline B_r.
\end{equation}
\end{theorem}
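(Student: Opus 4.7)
This is immediate from Theorem~\ref{thm2.3} applied to the ball $B_{4r}$: that theorem gives
\[
G(x, y) \le C\Bigl(1 + \log\frac{8r}{\abs{x-y}}\Bigr), \qquad x \neq y \in B_{4r}.
\]
For $x, y \in \overline{B_r}$ one has $\abs{x-y}\le 2r$, hence $\log(3r/\abs{x-y}) \ge \log(3/2) > 0$. Writing $1 + \log(8r/\abs{x-y}) = (1+\log\tfrac{8}{3}) + \log(3r/\abs{x-y})$, the bracketed constant can be absorbed into a multiplicative constant times $\log(3r/\abs{x-y})$, yielding the right-hand inequality of \eqref{estimate_green}.

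\textbf{Lower bound.} The plan is to reduce to the principal-part operator $L_0 := a^{ij}D_{ij}$. From the construction in the proof of Theorem~\ref{thm2.3}, the Green's function $G$ of $L$ in $B_{4r}$ admits the decomposition
\[
G(x, y) = G_0(x, y) + u^y(x),
\]
where $G_0$ is the Green's function of $L_0$ in $B_{4r}$ and $u^y$ solves \eqref{eq1123thu}. By the uniform bound \eqref{eq1410thu}, $\norm{u^y}_{L^\infty(B_{4r})} \le C_1$. Therefore $G(x, y) \ge G_0(x, y) - C_1$, and it suffices to establish a logarithmic lower bound for $G_0$ on $\overline{B_r}\times \overline{B_r}$.

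To prove such a bound for $G_0$, I proceed in two stages. \emph{Near the diagonal}, i.e., when $\abs{x-y} \le r/K$ for a large constant $K$, I freeze the coefficients at $y$ to obtain $L_0^y := a^{ij}(y) D_{ij}$, and compare $G_0(\cdot, y)$ with the explicit fundamental solution of $L_0^y$, namely $\Phi_y(x) := -\frac{1}{2\pi\sqrt{\det \mathbf A(y)}}\log\bigabs{\mathbf A(y)^{-1/2}(x - y)}$. Condition~\ref{cond2} makes $(L_0 - L_0^y)\Phi_y$ integrable with a Dini-controlled norm, and $L^p$-theory combined with the boundary behavior of $G_0$ on $\partial B_{4r}$ (as in \cite{DK21, DK17}) yields $G_0(x,y) \ge c \log(3r/\abs{x-y}) - C_2$ in this regime. \emph{Away from the diagonal}, i.e., for $r/K \le \abs{x-y} \le 2r$, the function $G_0(\cdot, y)$ is positive and $L_0$-harmonic in $B_{3r}(y)\setminus\{y\}$; Harnack's inequality along a chain of uniformly boundedly many balls connects $x$ to a reference point at distance $r/K$ from $y$, transporting the near-diagonal lower bound to all of $\overline{B_r}$ (where $\log(3r/\abs{x-y})$ is bounded above).

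Combining the resulting bound $G_0(x,y) \ge c\log(3r/\abs{x-y}) - (C_1 + C_2 + C_3)$ for $G$, I finish as follows. When $\abs{x-y}$ is small enough that the logarithm dominates the additive constant, a direct division gives $G(x,y) \ge \tfrac{c}{2}\log(3r/\abs{x-y})$. For $\abs{x-y}$ bounded below by a fixed multiple of $r$, the right-hand side of \eqref{estimate_green} is bounded, so it suffices to show $G(x,y) \ge c' > 0$; this follows from a final Harnack-chain argument for the positive $L$-harmonic function $G(\cdot,y)$ in $B_{4r}\setminus\{y\}$, using the Safonov--Krylov Harnack inequality (valid since $\vec b \in L^{p_0}$, $c \in L^{p_0/2}$ with $p_0 > 2$ and $c \le 0$), starting from a point where the preceding near-diagonal bound already provides a positive lower bound.

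\textbf{Main obstacle.} The technical heart is the near-diagonal lower bound for $G_0$. The companion upper bound is stated in \eqref{bound1939mon}, but the lower bound requires quantitatively reversing that comparison: one must show that the error $(L_0 - L_0^y)\Phi_y$ does not swamp the logarithmic singularity. This is exactly where the Dini integrability of $\omega_{\mathbf A}$ in Condition~\ref{cond2} is essential; without it, the perturbative argument against the frozen fundamental solution $\Phi_y$ would fail to produce a clean logarithmic lower bound.
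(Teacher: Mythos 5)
Your upper bound is correct and matches the paper's (which simply reuses \eqref{1553thu}). For the lower bound, however, your route is genuinely different from the paper's, and it contains a real gap. The paper does \emph{not} go through a frozen-coefficient perturbation for $G_0$. Instead it runs a telescoping argument entirely at the level of $L$ itself: writing $\rho = \abs{x-y}$, one introduces the Green's functions $G_{3^i\rho}$ of $L$ on the nested balls $B_{3^i\rho}(y)$ for $i = 1,\dots,N$ (with $N+1\simeq\log(3r/\rho)$), proves via an adjoint Harnack inequality that $G_{3^i\rho}(\cdot,y)\ge C_1$ on $\partial B_{3^{i-1}\rho}(y)$, and then uses the comparison principle on each telescoping difference $G_{3^{i+1}\rho}(\cdot,y)-G_{3^i\rho}(\cdot,y)$ (which is $L$-harmonic and nonnegative) to conclude $G(x,y)\ge C_1(N+1)$. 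This produces the logarithm directly with a uniform constant and never requires a lower bound on $G_0$ alone.

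The gap in your proposal is precisely the step you flag as the ``technical heart,'' the near-diagonal lower bound for $G_0$. You propose to solve $L_0 w = -(L_0 - L_0^y)\Phi_y$ and absorb $w$ into a bounded error, but the source term $(L_0-L_0^y)\Phi_y$ behaves like $\omega_{\mathbf A}(\abs{\cdot-y})/\abs{\cdot-y}^2$, which under the Dini condition is in $L^1$ but not in any $L^p$ with $p>1$; so the standard $L^p$/Sobolev route to an $L^\infty$ bound on $w$ does not directly apply, and a careful argument via the Green's function representation and the Dini integral would be required. That is essentially as hard as the statement you are trying to prove, and indeed the paper cites \cite{DK21} only for the \emph{upper} bound on $G_0$, not a matching lower bound. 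In short: your plan is coherent in outline but its central lemma (the $G_0$ lower bound near the diagonal) is neither proved here nor available off the shelf, whereas the paper's telescoping-plus-Harnack scheme circumvents this difficulty entirely and is the approach you should adopt.
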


\begin{proof}
The upper bound is already provided in \eqref{1553thu} as $\abs{x-y} \le 2r$.
To establish the lower bound, we fix $x \neq y \in B_r$, and set $\rho=\abs{x-y}$.
We consider a collection of balls $B_{3^{j}\rho}(y)$ for $i = 1, \ldots, N$, such that $B_{3^{N}\rho}(y) \subset B_{4r}$ but $B_{3^{N+1}\rho}(y)\not \subset B_{4r}$.
Note that the chosen integer $N$ satisfies
\begin{equation}\label{mon1543}
N+1 \simeq \log \left(\frac{3r}{\abs{x-y}}\right).
\end{equation}

We use the notation $G_{3^i \rho}$ for the Green's function for $L$ on $B_{3^i \rho}(y)$.
Our first claim is that there exist a positive constant $C_1$ such that we have
\begin{equation}		\label{lower_1229}
C_1 \le G_{3^i \rho}(z,y),\quad \forall z \in \partial B_{3^{i-1}\rho}(y),\quad i=1,2,\ldots, N.
\end{equation}
Here, the constant $C_1$ only depends on $\lambda$, $\Lambda$, $\omega_{\mathbf{A}}$, and $\diam \mathcal{B}$.

Assume for now that the claim is proven.
We rewrite $G(x,y)$ as
\begin{equation}		\label{mon_240818}
G(x,y) = \left(G(x,y) - G_{3^N \rho}(x,y)\right) + \sum_{i=1}^{N-1}\left(G_{3^{i+1}\rho}(x,y) -G_{3^i \rho}(x,y)\right) + G_{3\rho}(x,y).
\end{equation}
For $i=1, \ldots, N-1$ and for every $z \in \partial B_{3^i \rho}(y)$, we have
\[
G_{3^{i+1}\rho}(z,y) -G_{3^i \rho}(z,y) = G_{3^{i+1}\rho}(z, y) \ge C_1
\]
since we assumed the claim.

Note that $u:=G_{3^{i+1}\rho}(\,\cdot, y) -G_{3^i \rho}(\,\cdot,y)$ satisfies $L u =0$ in $B_{3^i \rho}(y)$.
By the comparison principle, we deduce that the previous inequality holds for all $z \in B_{3^i \rho}(y)$.
In particular, setting $z=x$ gives
\[
C_1 \le G_{3^{i+1}\rho}(x, y) -G_{3^i \rho}(x, y).
\]
A similar argument yields
\[
C_1 \le G(x, y) -G_{3^M \rho}(x, y).
\]
Therefore, using \eqref{mon1543} and \eqref{mon_240818}, we obtain
\[
C_2 \log\left(\frac{3r}{\abs{x-y}}\right) \le  C_1 (N+1) \le G(x, y). 
\]
This establishes the lower bound in \eqref{estimate_green} as well.

It remains only to prove the claim \eqref{lower_1229}.
Since the general case follows similarly, it suffices to consider $i=1$.
Consider the Green's function $G_{3 \rho}(\,\cdot, \cdot\,)$ for $L$ on $B_{3\rho}(y)$, and let $z \in \partial B_{\rho}(y)$.
Choose a nonnegative function $\eta \in C^{\infty}_c(B_{\rho}(z))$ such that
\[
\eta=1\;\text{ in }\;B_{3\rho/4}(z),\quad \norm{D\eta}_{L^\infty} \le 8/\rho,\quad \norm{D^2 \eta}_{L^\infty} \le 16/\rho^2.
\]

We employ the Harnack's inequality (\cite[Theorem 4.3]{GK24}) for nonnegative solutions of the double divergence form equation
\[
L^*u=D_{ij}(a^{ij}u)-D_i(b^i u)+cu=0.
\]
Using H\"older's inequality and the facts that $p_0>2$ and $c \le 0$, we obtain
\begin{align*}
1&=\eta(z)=\int_{B_{3 \rho}(y)} G_{3\rho}(z, \cdot\,) L \eta \leq \int_{B_{\rho}(z)\setminus B_{3\rho/4}(z)} G_{3\rho}^*(\,\cdot,z) \left(a^{ij}D_{ij}\eta + b^i D_i \eta\right)\\
&\le C \sup_{B_{\rho}\setminus B_{3\rho/4}} G_{3\rho}^*(\,\cdot,z) \left( \rho^{-2}\, \abs{B_{\rho}} + \rho^{-1} \,\norm{\vec b}_{L^{p_0}(B_{\rho}(z))} \abs{B_{\rho}}^{1-1/p_0}\right) \le C \sup_{B_{\rho}\setminus B_{3\rho/4}} G_{3\rho}^*(\,\cdot, z).
\end{align*}

Next, observe that any two points in $B_{\rho}(z)\setminus B_{3\rho/4}(z)$ can be connected by a chain of at most $\lceil 4\pi \rceil$ balls of radius $\rho/4$, all contained in $B_{5\rho/4}(z)\setminus B_{\rho/2}(z)$.

Applying Harnack's inequality iteratively to $G_{3\rho}^*(\,\cdot, z)$ on each balls, we deduce
\[
\sup_{B_{\rho}(z) \setminus B_{3\rho/4}(z)} G_{3\rho}^*(\,\cdot, z) \le C G_{3\rho}^*(y,z)= C G_{3\rho}(z,y).
\]
Thus, we conclude that
\[
G_{3\rho}(z,y) \ge C \quad \text{for all } z \in \partial B_{\rho}(y).
\]
Defining $C_1$ as the constant in the above inequality completes the proof.
\end{proof}

\begin{remark}			\label{rmk0807fri}
From the proof of Theorem \ref{thm_green_function}, it follows that 
\[
C_1 \log \left(\frac{5r}{2\abs{x-y}}\right) \le G(x,y)\le C_2 \log \left(\frac{4r}{\abs{x-y}}\right),\quad x \neq y \in \overline B_{3r/2},
\]
where $C_1$ and $C_2$ are positive constants depending only on $\lambda$, $\Lambda$, $\omega_{\mathbf A}$, and $\diam \mathcal B$.
We use this estimate in the proof of Theorem \ref{thm_wiener}.
\end{remark}

\section{Relative Potential and Capacity}		\label{sec3}
Throughout this section, we assume that
\[
c \equiv 0.
\]
We refer to \cite[Section 3.3]{DKK25} for definitions of $L$-supersolution, $L$-subsolution, etc.
We use the notation  $\overline H_f$ and $\underline H_f$ for the Perron upper and lower solutions, respectively, to the Dirichlet problem:
\[
Lu=0 \;\text{ in }\; \Omega,\qquad
u=f\; \text{ on }\;\partial \Omega.
\]
When $f$ is continuous, Wiener proved that for  $L=\Delta$,
\[
\overline H_f=\underline H_f.
\]
For a proof, see \cite[Theorem 3.6.16]{Helms}, which also applies to general operators $L$ as shown below.

\begin{lemma}
Let $\Omega \subset \bR^2$ be a bounded open set and $f \in C(\partial\Omega)$.
Then $\overline H_f=\underline H_f$. 
\end{lemma}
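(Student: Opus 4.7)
The plan is to adapt the classical Perron--Wiener--Brelot resolutivity argument of \cite[Theorem~3.6.16]{Helms} to the operator $L$. The easy inequality $\underline H_f \le \overline H_f$ is immediate from the comparison principle: given $u$ in the lower Perron class and $v$ in the upper Perron class for $f$, the function $v-u$ is an $L$-supersolution in $\Omega$ with $\liminf_{x\to y}(v-u)(x)\ge 0$ for every $y\in\partial\Omega$, so $v\ge u$ in $\Omega$ by the maximum principle (which applies since $c\le 0$).

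For the reverse inequality, I would first record basic structural properties. Because $c\equiv 0$ in this section, constants are $L$-harmonic, so $\overline H_{f+k}=\overline H_f+k$ for every $k\in\bR$. Combined with monotonicity and the subadditivity $\overline H_{f+g}\le \overline H_f+\overline H_g$ (together with the dual identity $\underline H_f=-\overline H_{-f}$), this yields
\[
\sup_\Omega \bigl|\overline H_f - \overline H_g\bigr| \le \|f-g\|_{L^\infty(\partial\Omega)}
\]
and the analogous bound for $\underline H$. Consequently, $R:=\{f\in C(\partial\Omega):\overline H_f=\underline H_f\}$ is a closed linear subspace of $C(\partial\Omega)$, so it suffices to exhibit a uniformly dense subset of $R$.

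By Tietze extension and mollification, every $f\in C(\partial\Omega)$ is a uniform limit of restrictions $\varphi|_{\partial\Omega}$ with $\varphi\in C^\infty_c(\bR^2)$; so I focus on such $\varphi$. Since $\mathcal{B}$ is a ball and hence $L$-regular, \cite[Theorem~4.2]{Krylov21} produces $v\in W^{2,p_0/2}(\mathcal{B})\cap C(\overline{\mathcal{B}})$ solving $Lv=0$ in $\mathcal{B}$ with $v=\varphi$ on $\partial\mathcal{B}$. Writing $L\varphi=g^+-g^-\in L^{p_0/2}(\mathcal{B})$ and using the Green's function $G_{\mathcal{B}}$ of Theorem~\ref{thm2.3}, one obtains the decomposition
\[
\varphi = v + \int_{\mathcal{B}} G_{\mathcal{B}}(\cdot,y)\,g^-(y)\,dy - \int_{\mathcal{B}} G_{\mathcal{B}}(\cdot,y)\,g^+(y)\,dy = v + p^- - p^+
\]
on $\overline{\mathcal{B}}$, where the Green potentials $p^\pm\ge 0$ are continuous on $\overline{\mathcal{B}}$, vanish on $\partial\mathcal{B}$, and satisfy $Lp^\pm=-g^\pm\le 0$, so are $L$-supersolutions on $\mathcal{B}$. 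The piece $v$ is continuous and $L$-harmonic on $\overline{\mathcal{B}}$, hence lies simultaneously in both Perron classes for $v|_{\partial\Omega}$, giving $v|_{\partial\Omega}\in R$.

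The main obstacle -- where the adaptation of Helms truly occurs -- is to show $p^\pm|_{\partial\Omega}\in R$, or more generally that every continuous $L$-supersolution $s$ on $\overline{\mathcal{B}}$ has resolutive restriction to $\partial\Omega$. I would argue by balayage on a smooth exhaustion $\Omega_n\Subset\Omega$ by $L$-regular subdomains: solving $Lh_n=0$ in $\Omega_n$ with $h_n=s$ on $\partial\Omega_n$ and extending $h_n$ by $s$ on $\Omega\setminus\Omega_n$ produces a continuous function $\tilde h_n$ that is an $L$-supersolution on $\Omega$, lies in the upper Perron class for $s|_{\partial\Omega}$, and (by comparison across nested $\Omega_n\subset\Omega_{n+1}$) decreases to the greatest $L$-harmonic minorant $h^*$ of $s$ on $\Omega$; thus $\overline H_{s|_{\partial\Omega}}\le h^*$. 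Since $h^*\le s$ on $\Omega$ and $s$ is continuous on $\overline\Omega$, $\limsup_{x\to y}h^*(x)\le s(y)$ for every $y\in\partial\Omega$, so $h^*$ belongs to the lower Perron class for $s|_{\partial\Omega}$, whence $h^*\le\underline H_{s|_{\partial\Omega}}$. Combining with $\underline H_{s|_{\partial\Omega}}\le\overline H_{s|_{\partial\Omega}}\le h^*$ forces equality, so $s|_{\partial\Omega}\in R$, and by linearity $\varphi|_{\partial\Omega}\in R$, which completes the proof.
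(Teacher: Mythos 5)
Your proof is correct and follows essentially the same route as the paper's: approximate $f$ uniformly by smooth functions (you use $C^\infty_c$, the paper uses polynomials), use the Green's function of Theorem~\ref{thm2.3} on $\mathcal{B}$ to write each approximant as a difference of continuous $L$-supersolutions, and then invoke the density/resolutivity mechanism of \cite[Theorem~3.6.16]{Helms}. The only real difference is that the paper defers the final step to Helms with a one-line citation, whereas you actually spell out the needed ingredient (resolutivity of continuous $L$-supersolutions via balayage on an exhaustion); that expansion is correct and matches what the cited argument does.
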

\begin{proof}
Let $p_n$ be a sequence of polynomials such that $p_n \to f$ uniformly on $\partial\Omega$.
Consider a ball $\mathcal{B}$ containing $\overline\Omega$, and let $G(x,y)$ be the Green's function for $L$ in $\mathcal{B}$.
Noting that $Lp_n \in L^p(\mathcal B)$ for some $p>1$, define
\[
v_n(x)=\int_{\mathcal B} G(x,y)\, \abs{L p_n(y)}\,dy.
\]
Clearly, $v_n$ is a continuous $L$-supersolution, and $p_n+v_n$ is also a continuous $L$-supersolution.
Thus, $u$ can be approximated uniformly on $\partial\Omega$ by the difference of two continuous $L$-supersolutions.
The remainder of the proof follows exactly as in \cite[Theorem 3.6.16]{Helms}.
\end{proof}

Thus, for continuous $f$, we will use $H_f$ to denote the Perron solution.

We recall that a point $x_0 \in \partial \Omega$ is called a regular point if for all $f \in C(\partial \Omega)$, the Perron solution $H_f$ satisfies
\[
\lim_{x \to x_0, \, x\in \Omega}\, H_f(x)=f(x_0).
\]

It is well known that a point $x_0$ is a regular point if and only if there exists a barrier at $x_0$.
A function $w$ is called a \textit{barrier} (with respect to $\Omega$) at $x_0$ if:
\begin{enumerate}[leftmargin=*]
\item[(i)]
$w$ is an $L$-supersolution in $\Omega$.
\item[(ii)]
For any $\delta>0$, there exists $\epsilon>0$ such that $w \ge \epsilon$ on $\partial\Omega \setminus B_\delta(x_0)$.
\item[(iii)]
$\lim_{x\to x_0,\,x \in \Omega} w(x)=0$.
\end{enumerate}

The following result establishes that being a regular point is a local property:
\begin{lemma}		\label{local_property_barrier}
A point $x_0 \in \partial \Omega$ is a regular point with respect to $\Omega$ if and only if $x_0$ is a regular point with respect to $B_r(x_0)\cap \Omega$ for some $r>0$.
\end{lemma}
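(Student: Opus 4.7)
The plan is to use the barrier characterization of regularity (stated just before the lemma): a point is regular if and only if it admits a barrier. Writing $\Omega' := B_r(x_0) \cap \Omega$, I will show that a barrier for $\Omega$ at $x_0$ restricts to a barrier for $\Omega'$, and conversely that a barrier for $\Omega'$ extends to a barrier for $\Omega$ by pasting it together with a small positive constant across the spherical interface $\partial B_r(x_0)$.

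For the forward direction, let $w$ be a barrier for $\Omega$ at $x_0$, and consider its restriction to $\Omega'$. Property (i) is immediate (the restriction of an $L$-supersolution is an $L$-supersolution), as is property (iii) since $\Omega' \subset \Omega$. For property (ii), fix $\delta \in (0,r)$ and decompose
\[
\partial\Omega' \setminus B_\delta(x_0) = \bigl[(\partial\Omega \cap \overline{B_r(x_0)}) \setminus B_\delta(x_0)\bigr] \cup \bigl[\partial B_r(x_0) \cap \overline\Omega\bigr].
\]
On the first piece, property (ii) for $w$ on $\Omega$ supplies a constant $\epsilon_1 > 0$ with $w \geq \epsilon_1$. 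The second piece is compact and lies at distance $r$ from $x_0$; on it, $w$ is strictly positive (by property (ii) for $\Omega$ applied with $\delta = r/2$ at its boundary points, and by the strong maximum principle at its interior points, using $c \equiv 0$), and lower semicontinuity then forces a strictly positive infimum.

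For the backward direction, let $w'$ be a barrier for $\Omega'$ at $x_0$. Applying property (ii) for $w'$ with $\delta = r/2$ yields $\epsilon_0 > 0$ with $w' \geq \epsilon_0$ on $\partial\Omega' \setminus B_{r/2}(x_0)$, and in particular on the entire spherical component $\partial B_r(x_0) \cap \overline\Omega$ of $\partial\Omega'$. Fix any $k \in (0,\epsilon_0)$ and define
\[
w(x) := \begin{cases} \min\{w'(x),\, k\} & x \in \Omega \cap B_r(x_0), \\ k & x \in \Omega \setminus B_r(x_0). \end{cases}
\]
Lower semicontinuity of $w'$ together with $w' \geq \epsilon_0 > k$ on $\partial B_r(x_0) \cap \overline\Omega$ produces a neighborhood $U$ of $\partial B_r(x_0) \cap \Omega$ in $\Omega$ on which $w' > k$; hence $w \equiv k$ on $U$, and the two branches of the definition agree there. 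On $\Omega \cap B_r(x_0)$, $w$ is the minimum of the $L$-supersolution $w'$ and the constant $k$ (which is an $L$-supersolution since $c \equiv 0$), hence itself an $L$-supersolution; on $\Omega \setminus \overline{B_r(x_0)}$, $w \equiv k$; on $U$, covering the interface, $w \equiv k$ as well, so no gluing obstruction arises. Property (iii) is inherited from $w'$, and property (ii) for $w$ on $\partial\Omega$ reduces to the bound for $w'$ on $\partial\Omega \cap B_r(x_0)$ combined with $w \equiv k$ on the exterior portion of $\partial\Omega$.

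The main obstacle is ensuring that the pasted function is a genuine $L$-supersolution across the spherical interface $\partial B_r(x_0) \cap \Omega$, where the two defining formulas meet. The choice $k < \epsilon_0$, together with lower semicontinuity of $w'$, is precisely what is needed to collapse the construction to the constant $k$ in a full open neighborhood of the interface, thereby avoiding any delicate matching argument at the gluing and reducing supersolution verification to the trivially clear cases.
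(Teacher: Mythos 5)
Your proof is correct and follows essentially the same strategy as the paper: restrict the barrier for the forward direction, and for the converse, paste $\min(w',k)$ with the constant $k$ across a sphere so that the two formulas agree identically in a neighborhood of the interface. The only (inessential) variation is that you paste at $\partial B_r(x_0)$ and justify $w'>k$ there via the barrier's boundary lower bound from property (ii), whereas the paper pastes at the interior sphere $\partial B_{r/2}(x_0)$ and invokes lower semicontinuity and positivity of $w'$ on that sphere inside $\Omega'$.
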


\begin{proof}
Suppose $x_0$ is regular with respect to $\Omega$, so that there exists a barrier $w$ at $x_0$ with respect to $\Omega$.
Then, the restriction of $w$ to $B_r(x_0)\cap \Omega$ is clearly a barrier at $x_0$ with respect to $B_r(x_0) \cap \Omega$. 
    
Conversely, suppose there exists a barrier $w$ at $x_0$ with respect to $B_r(x_0) \cap \Omega$.
Since $w$ is lower semicontinuous, it is positive on $\partial B_{r/2}(x_0) \cap \Omega$.
Define
\[
m:= \min_{\partial B_{r/2} \cap \Omega} w>0.
\]
We now define a function $\tilde w$ by
\[
\tilde w:=\begin{cases} \min (w, m) &\text{ in }\; B_{r/2}(x_0)\cap \Omega,\\
m&\text { in } \Omega\; \setminus B_{r/2}(x_0).
\end{cases}
\]
Then $\tilde w$ is a barrier at $x_0$ with respect to $\Omega$.
\end{proof}

\medskip

In \cite{DKK25}, we fixed a large ball $\mathcal{B}$ and for a set $E \Subset \mathcal{B}$, we introduced $\hat u_E$, the capacitary potential of $E$.
In particular, $\hat u_E$ vanishes on $\partial \mathcal{B}$.
In the current two-dimensional setting, it is convenient to consider a capacitary potential of $E$ relative to $\mathcal{D}$, where $\mathcal{D} \subset \mathcal{B}$ is not fixed.
In our main application, $\mathcal{D}$ will also be a ball.
We denote by $G_{\mathcal{D}}(x,y)$ the Green's function for $L$ in $\mathcal{D}$.

\begin{definition}			\label{def_rfn}
For $E \Subset \mathcal{D}$, we define
\[
u_{E,  \mathcal{D}}(x):=\inf \,\Set{v(x):  v \in \mathfrak{S}^+(\mathcal{D}),\; v\ge 0\text{ in }\mathcal{D},\; v \ge 1 \text{ in }E},\quad x \in \mathcal{D}.
\]
The lower semicontinuous regularization $\hat u_{E,\mathcal{D}}$, defined by
\[
\hat u_{E,\mathcal{D}}(x)=\sup_{r>0} \left( \inf_{\mathcal{D} \cap B_r(x)} u_{E,\mathcal{D}}\right),
\]
is called the capacitary potential of $E$ relative to $\mathcal{D}$.
\end{definition}

\begin{lemma}				\label{lem1013sat} 
The following are true:
\begin{enumerate}[leftmargin=*, label=(\alph*)]
\item		\label{item_a}
$0\le \hat u_{E,\mathcal{D}} \le u_{E,\mathcal{D}} \le 1$.
\item
$u_{E,\mathcal{D}}=1$ in $E$ and $\hat u_{E,\mathcal{D}} =1$ in $\intr (E)$.
\item
$\hat u_{E,\mathcal{D}}$ is a potential in $\mathcal{D}$.
That is, it is a nonnegative $L$-supersolution in $\mathcal{D}$, which finite at every point in $\mathcal{D}$, and vanishes continuously on $\partial \mathcal{D}$.
\item
$u_{E,\mathcal{D}}=\hat u_{E,\mathcal{D}}$ in $\mathcal{D}\setminus \overline{E}$ and
$L u_{E,\mathcal{D}}=L \hat u_{E,\mathcal{D}}=0$ in $\mathcal{D}\setminus \overline{E}$.
\item		\label{item_e}
If $x_0\in \partial E$, we have
\[
\liminf_{x\to x_0} \hat u_{E,\mathcal{D}}(x) =\liminf_{x\to x_0,\, x\in \mathcal{D} \setminus  E} \hat u_{E,\mathcal{D}}(x).
\]
\item		\label{item_f}
$\hat u_{E_1,\mathcal{D}} \le \hat u_{E_2,\mathcal{D}}$\, whenever $E_1 \subset E_2 \subset \mathcal{D}$.
\item		\label{item_g}
$\hat u_{E,\mathcal{D}_1} \le \hat u_{E,\mathcal{D}_2}$\, whenever $E \subset \mathcal{D}_1\subset \mathcal{D}_2$.
\end{enumerate}
\end{lemma}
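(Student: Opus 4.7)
The plan is to follow the potential-theoretic framework of \cite[Section~5]{DKK25}, where analogous properties are established in higher dimensions with the reference domain fixed at $\mathcal{B}$. The 2D case requires accommodating the variable reference domain $\mathcal{D}$ and the logarithmic nature of the Green's function.

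Parts \ref{item_a}, (b), \ref{item_f}, and \ref{item_g} are essentially formal. Since $c \equiv 0$ in this section, the constant function $1$ satisfies $L 1 = 0$ and is therefore an admissible competitor, yielding $u_{E,\mathcal{D}} \le 1$; nonnegativity of admissible functions gives $u_{E,\mathcal{D}} \ge 0$; and since $x \in \mathcal{D} \cap B_r(x)$ we have $\inf_{\mathcal{D} \cap B_r(x)} u_{E,\mathcal{D}} \le u_{E,\mathcal{D}}(x)$, so $\hat u_{E,\mathcal{D}} \le u_{E,\mathcal{D}}$, proving \ref{item_a}. For (b), every admissible $v$ satisfies $v \ge 1$ on $E$, hence $u_{E,\mathcal{D}} \equiv 1$ on $E$; if $x \in \intr(E)$ then $u_{E,\mathcal{D}} \equiv 1$ on a whole neighborhood of $x$, so $\hat u_{E,\mathcal{D}}(x) = 1$. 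For \ref{item_f}, any admissible competitor for $E_2$ is admissible for $E_1$ when $E_1 \subset E_2$. For \ref{item_g}, the restriction to $\mathcal{D}_1$ of any admissible competitor for $(E, \mathcal{D}_2)$ is admissible for $(E, \mathcal{D}_1)$. Each of these pointwise inequalities for $u_{E,\mathcal{D}}$ is preserved under lsc regularization.

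The substantive step is (c). Three properties must be verified: $\hat u_{E,\mathcal{D}}$ is a nonnegative $L$-supersolution on $\mathcal{D}$; it is finite at every point of $\mathcal{D}$; and it vanishes continuously on $\partial \mathcal{D}$. The supersolution property follows from the standard Choquet--Brelot fact that the lsc regularization of the infimum of a family of $L$-supersolutions is itself an $L$-supersolution; the requisite inputs (comparison principle, solvability of the Dirichlet problem for $L$ on balls) are available under Conditions \ref{cond1} and \ref{cond2}, so the argument from \cite[Section~5]{DKK25} transfers. Finiteness is immediate from the admissibility of $1$. For the boundary behavior on $\partial \mathcal{D}$, I plan to construct a continuous admissible competitor of Green-potential type: choose a slightly larger ball $\mathcal{D}' \Supset \mathcal{D}$ and define
\[
w(x) = C \int_{E} G_{\mathcal{D}'}(x,y)\,dy,
\]
with $C$ chosen via the lower bound of Theorem \ref{thm_green_function} so that $w \ge 1$ on $E$. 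Then $w$ is a continuous nonnegative $L$-supersolution on $\mathcal{D}$, and the upper bound \eqref{1553thu} shows $w(x) \to 0$ as $x \to \partial \mathcal{D}$; the inequality $\hat u_{E,\mathcal{D}} \le w$ then yields the continuous vanishing.

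For (d), a balayage argument: on any ball $B \Subset \mathcal{D} \setminus \overline{E}$, solve $Lv = 0$ in $B$ with boundary data $\hat u_{E,\mathcal{D}}$ on $\partial B$ and replace $\hat u_{E,\mathcal{D}}$ by this solution on $B$; the modified function is still a nonnegative $L$-supersolution, vanishes on $\partial \mathcal{D}$, equals $1$ on $E$, hence is admissible, and is $\le \hat u_{E,\mathcal{D}}$ by comparison. This forces $\hat u_{E,\mathcal{D}}$ to equal its $L$-harmonic lift on every such $B$, giving $L \hat u_{E,\mathcal{D}} = 0$ on $\mathcal{D} \setminus \overline{E}$; continuity of the lift then yields $u_{E,\mathcal{D}} = \hat u_{E,\mathcal{D}}$ there. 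Part \ref{item_e} follows since at $x_0 \in \partial E$ both liminfs are determined by the behavior of the continuous function $\hat u_{E,\mathcal{D}}|_{\mathcal{D} \setminus \overline{E}}$ near $x_0$, together with lower semicontinuity at $x_0$ from the $E$-side. The main obstacle I anticipate is the boundary-vanishing portion of (c): the logarithmic 2D Green's function precludes the uniform fundamental-solution upper bound available in higher dimensions, so the barrier must be tailored to $\mathcal{D}$ through Theorems \ref{thm2.3} and \ref{thm_green_function}.
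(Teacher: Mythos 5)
Your treatment of parts \ref{item_a}, (b), (d), \ref{item_e}, \ref{item_f}, \ref{item_g} is sound and in line with the paper, which simply cites \cite[Lemma 5.6]{DKK25} for \ref{item_a}--\ref{item_e} and observes that \ref{item_f}, \ref{item_g} are immediate from Definition \ref{def_rfn} and comparison.

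The barrier you construct for the boundary-vanishing portion of (c), however, does not work. You set $w(x)=C\int_E G_{\mathcal{D}'}(x,y)\,dy$ with $\mathcal{D}'$ a ball strictly containing $\mathcal{D}$. But $G_{\mathcal{D}'}(\cdot,y)$ vanishes on $\partial\mathcal{D}'$, not on $\partial\mathcal{D}$: for $x\in\partial\mathcal{D}$ and $y\in E\Subset\mathcal{D}\subsetneq\mathcal{D}'$ one has $G_{\mathcal{D}'}(x,y)>0$, so $w$ stays bounded away from zero near $\partial\mathcal{D}$. The estimate \eqref{1553thu} that you invoke is an off-diagonal upper bound for the Green's function, not a boundary decay estimate, so it cannot supply the vanishing. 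There is a second defect: integrating over $E$ itself gives $w\equiv 0$ whenever $\abs{E}=0$, and the measure-zero case (e.g.\ $E$ a line segment) is precisely the one used in Proposition \ref{prop0115} and Theorem \ref{thm0738}, so it cannot be excluded. The correct construction uses the Green's function of $\mathcal{D}$ itself, which by construction vanishes continuously on $\partial\mathcal{D}$, and fattens $E$: choose an open set $E'$ with $E\subset E'\Subset\mathcal{D}$ and set $w(x)=C\int_{E'}G_{\mathcal{D}}(x,y)\,dy$, so that $Lw=-C\mathbbm{1}_{E'}\le 0$ and $w\to 0$ on $\partial\mathcal{D}$. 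Since $B_\rho(x)\subset E'$ for all $x\in E$ and some fixed $\rho>0$, the lower Green's function bound of Theorem \ref{thm_green_function} (applied to a ball inside $\mathcal{D}$ and compared with $G_{\mathcal{D}}$ via the maximum principle, cf.\ part \ref{item_g}) yields a positive lower bound for $w$ on $E$, after which $C$ can be chosen to make $w\ge 1$ on $E$. Your underlying concern about the logarithmic Green's function is legitimate insofar as there is no universal fundamental-solution barrier as in $d\ge 3$; the remedy, though, is to tie the barrier to $G_{\mathcal{D}}$, not to enlarge the reference domain.
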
 
\begin{proof}
For \ref{item_a}--\ref{item_e}, refer to the proof of \cite[Lemma 5.6]{DKK25}.
Properties \ref{item_f} and \ref{item_g} follow directly from Definition \ref{def_rfn} by the comparison principle.
\end{proof}

The following lemma gives a convenient characterization of a regular point in terms of relative capacitary potentials.
\begin{lemma}		\label{lem_charact}
Let $\Omega \Subset \mathcal{B}$ and let $x_0 \in \partial \Omega$.
Denote
\[
B_r=B_r(x_0), \quad E_r = \overline{B_r}\setminus\Omega.
\]
Then, $x_0$ is regular if and only if
\[
\hat{u}_{E_r,B_{4r}}(x_0)=1
\]
for every $r$ satisfying $0<r<r_0:=\tfrac{1}{4}\dist(x_0,\partial\mathcal{B})$.
\end{lemma}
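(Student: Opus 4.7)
The plan is to treat the two directions separately.

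For the forward direction, I assume $x_0$ is regular. By Lemma~\ref{lem1013sat}(a), $\hat u_{E_r,B_{4r}}\le 1$, so it suffices to prove $u_{E_r,B_{4r}}(x)\to 1$ as $x\to x_0$ along $\Omega$; together with $u_{E_r,B_{4r}}\equiv 1$ on $E_r$ (Lemma~\ref{lem1013sat}(b)), the definition of the lower semicontinuous regularization then forces $\hat u_{E_r,B_{4r}}(x_0)=1$. To prove the convergence I fix any $L$-supersolution $v$ with $v\ge 0$ on $B_{4r}$ and $v\ge 1$ on $E_r$, and choose $\phi\in C(\partial(B_{4r}\cap\Omega))$ with $0\le\phi\le 1$, $\phi(x_0)=1$, $\phi\equiv 0$ on $\partial B_{4r}$, and supported close to $x_0$ on $\partial\Omega$ (this is continuous because $\partial B_{4r}\cap\partial\Omega$ lies outside $B_r$). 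Since $\partial\Omega\cap B_r\subset E_r$, the lower semicontinuity of $v$ makes it an upper Perron function for $\phi$ on $B_{4r}\cap\Omega$; regularity of $x_0$ for $B_{4r}\cap\Omega$ (Lemma~\ref{local_property_barrier}) then forces $H_\phi^{B_{4r}\cap\Omega}(x)\to 1$ at $x_0$, and the comparison $v\ge H_\phi^{B_{4r}\cap\Omega}$ yields $\liminf_{x\to x_0,\,x\in\Omega}v(x)\ge 1$. Taking an infimum over $v$ completes this direction.

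For the backward direction, I will construct a barrier at $x_0$ with respect to $B_R\cap\Omega$ for a fixed $R\in(0,r_0)$; by Lemma~\ref{local_property_barrier} this implies regularity for $\Omega$. The key observation is that for every $\rho\in(0,R]$, the set $B_R\cap\Omega$ is disjoint from $E_\rho=\overline{B_\rho}\setminus\Omega$, so Lemma~\ref{lem1013sat}(d) gives $L\hat u_{E_\rho,B_{4R}}=0$ on $B_R\cap\Omega$; since $c\equiv 0$ implies $L(1)=0$, the function $1-\hat u_{E_\rho,B_{4R}}$ is $L$-harmonic, hence an $L$-supersolution, on $B_R\cap\Omega$. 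Lemma~\ref{lem1013sat}(g) also upgrades the hypothesis to $\hat u_{E_{\rho},B_{4R}}(x_0)=1$ for every $\rho\in(0,R]$. Setting $\rho_k:=R\,2^{-k}$, the barrier candidate is
\[
w(x):=\sum_{k=1}^{\infty}2^{-k}\bigl(1-\hat u_{E_{\rho_k},B_{4R}}(x)\bigr),\qquad x\in B_R\cap\Omega.
\]
Each summand lies in $[0,1]$ and is $L$-harmonic on $B_R\cap\Omega$, so the series converges uniformly and defines an $L$-supersolution there. Pointwise, $\hat u_{E_{\rho_k},B_{4R}}(x)\to 1$ as $x\to x_0$ (from $\hat u\le 1$ and lower semicontinuity), and dominated convergence gives $w(x)\to 0$ at $x_0$.

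The main obstacle is verifying the lower-bound condition: for every $\delta>0$ one needs $\epsilon>0$ with $w\ge\epsilon$ on $\partial(B_R\cap\Omega)\setminus B_\delta(x_0)$. A single term $1-\hat u_{E_R,B_{4R}}$ cannot serve as a barrier, because $\hat u_{E_R,B_{4R}}\equiv 1$ throughout $\intr(E_R)$, which may cover much of $\partial\Omega\cap B_R$ and kill the difference there. The weighted series sidesteps this through a quantitative decay estimate: Lemma~\ref{lem1013sat}(f) gives $\hat u_{E_{\rho_k},B_{4R}}\le \hat u_{\overline{B_{\rho_k}},B_{4R}}$, and the latter is dominated by $G(\,\cdot,x_0)/\min_{\partial B_{\rho_k}}G(\,\cdot,x_0)$ via the maximum principle, where $G$ is the Green's function of $L$ in $B_{4R}$. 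Inserting the two-sided logarithmic bound from Theorem~\ref{thm_green_function} produces
\[
\hat u_{E_{\rho_k},B_{4R}}(y)\le C\,\frac{\log(3R/\delta)}{\log(3R/\rho_k)}\qquad\text{for }|y-x_0|\ge\delta.
\]
For $k\ge K(\delta)$ the right-hand side is at most $1/2$, so the tail of the series provides $w(y)\ge 2^{-K(\delta)}$ on the required boundary piece, completing the barrier construction.
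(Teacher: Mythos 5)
Your proof is correct, and the two directions differ from the paper's argument to different degrees.

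For the forward (``regular $\Rightarrow \hat u_{E_r,B_{4r}}(x_0)=1$'') direction you proceed in essentially the same spirit as the paper's necessity argument, just in contrapositive form: both arguments compare $u_{E_r,B_{4r}}$ with the Perron solution of a cutoff boundary function on $\Omega\cap B_{4r}$ (your $\phi$ and the paper's $f(x)=(1-|x-x_0|/r)_+$ play identical roles), and the localization via Lemma~\ref{local_property_barrier} is used in both. So this half is essentially the paper's proof.

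The backward direction is where you take a genuinely different route. The paper does not build a barrier at all: it passes to $\hat u_{E_r,\mathcal{B}}$ via Lemma~\ref{lem1013sat}\ref{item_g}, invokes the strong maximum principle to find a quantitative margin $\delta=1-\sup_{\partial B_{2r}}\hat u_{E_r,\mathcal{B}}>0$, and directly traps the upper and lower Perron solutions between $f(x_0)\pm\epsilon$ and a multiple of $1-\hat u_{E_r,\mathcal{B}}$. You instead construct an explicit barrier $w=\sum_k 2^{-k}\bigl(1-\hat u_{E_{\rho_k},B_{4R}}\bigr)$ at $x_0$ for $B_R\cap\Omega$. Your key step---bounding each tail term via $\hat u_{E_{\rho_k},B_{4R}}\le \hat u_{\overline{B_{\rho_k}},B_{4R}}\le G(\cdot,x_0)/m_k$ with $m_k=\inf_{\partial B_{\rho_k}}G(\cdot,x_0)$, and then inserting the two-sided logarithmic estimate from Theorem~\ref{thm_green_function}---is sound: the required inequality $\hat u_{\overline{B_s}(y_0),B_{4r}}\le G(\cdot,y_0)/m$ is exactly the estimate \eqref{eq1840tue} in Proposition~\ref{capa_B_r}, and since $\log(3R/\rho_k)=\log3+k\log2\to\infty$ the tail of your series gives the uniform lower bound $w\ge 2^{-K(\delta)}$ off $B_\delta(x_0)$. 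Each summand is $L$-harmonic on $B_R\cap\Omega$, the series converges uniformly, and $\hat u_{E_{\rho_k},B_{4R}}(x)\to1$ at $x_0$ by lower semicontinuity plus the hypothesis (upgraded via Lemma~\ref{lem1013sat}\ref{item_g}), so conditions (i)--(iii) of the barrier definition are all met. Your approach is more quantitative and more explicitly in the barrier framework (and closer in flavor to the weighted-series idea behind Lemma~\ref{lem2119thu}); the paper's approach is shorter because it avoids the barrier construction entirely and instead exploits the soft strong-maximum-principle margin $\delta>0$. Both are valid proofs.
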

\begin{proof}
\noindent
\textbf{(Necessity)}
Suppose that $\hat{u}_{E_r,B_{4r}}(x_0)<1$ for some $r \in (0,r_0)$.
Define the function
\[
f(x)=\left(1-\abs{x-x_0}/r\right)_+.
\]
Let $u$ be the lower Perron solution to the Dirichlet problem
\[
Lu=0\;\text{ in }\;\Omega\cap B_{4r},\quad  u=f\;\text{ on }\;\partial(\Omega\cap B_{4r}).
\]
By Definition \ref{def_rfn} and Lemma \ref{lem1013sat}(d), we have
\[
u \le u_{E_r, B_{4r}}=\hat u_{E_r, B_{4r}}\;\text{ in }\;\Omega\cap B_{4r}.
\]
This contradicts the regularity at $x_0$, since
\[
\liminf_{x\to x_0,\,x\in \Omega\cap B_{4r}}\ u(x) \le  \liminf_{x\to x_0}\, \hat u_{E_r, B_{4r}}(x)= \hat u_{E_r, B_{4r}}(x_0) < 1=f(x_0),
\]
where we applied Lemma \ref{lem1013sat}(e) and the fact that
\[
\liminf_{x \to x_0} \hat u_{E_r, B_{4r}}(x)=\hat u_{E_r, B_{4r}}(x_0),
\]
which is due to $u$ is an $L$-supersolution in $B_{4r}$ (see \cite[Lemma 3.21(c)]{DKK25}).
Thus, by Lemma \ref{local_property_barrier}, the point $x_0$ is not regular.
 
\medskip
\noindent
\textbf{(Sufficiency)}
Suppose that $\hat{u}_{E_r,B_{4r}}(x_0)=1$ for every $r\in(0,r_0)$.
Observe that
\[
1= \hat{u}_{E_r,B_{4r}}(x_0) \leq \hat{u}_{E_r,\mathcal{B}}(x_0) \leq 1,
\]
which implies that $\hat{u}_{E_r,\mathcal{B}}(x_0)=1$ for all $r \in (0,r_0)$.

Let $f$ be a continuous function on $\partial \Omega$.
Then, for every $\epsilon>0$, there exists $r \in (0,r_0)$ such that $\abs{f(x) - f(x_0)}< \epsilon$ for all $x \in \partial \Omega$ satisfying $\abs{x-x_0}<2r$. 

On the other hand, since
\[
L\hat{u}_{E_r,\mathcal{B}}=0\;\text{ in }\;\mathcal{B} \setminus \overline B_{3r/2},
\]
the strong maximum principle implies the existence of $\delta \in (0,1)$ such that
\[
\sup_{\partial B_{2r}} \hat{u}_{E_r,\mathcal{B}}=1-\delta.
\]
Define $M:=\sup_{\partial \Omega} \,\abs{f-f(x_0)}$ and consider the function
\[
v:=f(x_0)+\epsilon + \frac{M}{\delta}(1- \hat{u}_{E_r,\mathcal{B}}).
\]
Note that $v \ge f(x_0)+\epsilon \ge f$ on $\partial \Omega \cap B_{2r}(x_0)$, and $v \ge f(x_0)+M \ge  f$ on $\partial\Omega \setminus B_{2r}(x_0)$, since the maximum principle ensures that $\hat{u}_{E_r,\mathcal{B}} \le 1-\delta$ in $\mathcal{B}\setminus B_{2r}(x_0)$.
Thus, we conclude that $v \ge f$ on $\partial \Omega$.
Let $\overline H_f$ be the upper Perron solution of the problem
\[
Lu=0\;\text{ in }\;\Omega,\quad  u=f\;\text{ on }\;\partial \Omega.
\]
It is clear that $\overline H_f \le v$ in $\Omega$.
Thus, we obtain
\[
\limsup_{x \to x_0,\, x \in \Omega} \overline H_f(x) \le \limsup_{x \to x_0,\, x \in \Omega}  v(x) = f(x_0) + \epsilon + \frac{M}{\delta}(1-\liminf_{x \to x_0} \hat{u}_{E_r,\mathcal{B}}(x))=f(x_0) + \epsilon,
\]
where we used Lemma \ref{lem1013sat}(e).
Similarly, consider 
\[
v:=f(x_0)-\epsilon + \frac{M}{\delta}(\hat{u}_{E_r,\mathcal{B}}-1).
\]
Observing that $v \le f$ on $\partial \Omega$, we obtain
\[
\liminf_{x \to x_0,\, x \in \Omega} \underline H_f(x) \ge \liminf_{x \to x_0,\, x \in \Omega} v(x) = f(x_0)-\epsilon+\frac{M}{\delta}(\liminf_{x \to x_0} \hat{u}_{E_r,\mathcal{B}}(x)-1)=f(x_0)-\epsilon.
\]
Since $\epsilon>0$ is arbitrary, we conclude that $x_0$ is a regular point.
\end{proof}

\begin{theorem}			\label{thm1929}
Let $K$ be a compact subset of $\mathcal{D}$.
There exists a Borel measure $\mu_{K, \mathcal{D}}$, called the capacitary measure of $K$ relative to $\mathcal{D}$, which is supported in $\partial K$ and satisfies
\[
\hat u_{K,\mathcal{D}}(x)=\int_{K} G_{\mathcal{D}}(x, y)\,d\mu_{K, \mathcal{D}} (y), \quad \forall x \in \mathcal{D}.
\]
\end{theorem}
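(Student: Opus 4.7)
The plan is to define $\mu_{K,\mathcal{D}}$ as the distribution $-L\hat u_{K,\mathcal{D}}$, verify via Riesz--Markov that it is a nonnegative Borel measure supported on $\partial K$, and then derive the integral representation by duality against the adjoint problem characterized in Theorem \ref{thm2.3}.

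First, I would invoke Lemma \ref{lem1013sat} to record that $\hat u := \hat u_{K,\mathcal{D}}$ is a bounded potential: $0\le \hat u\le 1$, $\hat u$ vanishes on $\partial\mathcal{D}$, $L\hat u=0$ classically on $\mathcal{D}\setminus\overline{K}$, and also on $\mathrm{int}(K)$ where $\hat u\equiv 1$ (using $c\equiv 0$ throughout this section). Define the distribution
\[
\langle \mu, \phi\rangle := -\int_{\mathcal{D}} \hat u\, L^*\phi\, dx, \qquad \phi\in C^\infty_c(\mathcal{D}).
\]
By the preceding observations, $\langle \mu, \phi\rangle = 0$ whenever $\mathrm{supp}\,\phi \Subset \mathcal{D}\setminus\partial K$, so any Borel representative will automatically be supported on $\partial K$.

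Next, I would show $\langle \mu,\phi\rangle\ge 0$ for every nonnegative $\phi$, upgrading the Perron $L$-supersolution property of $\hat u$ to a distributional inequality. My plan is to approximate $\hat u$ from below by an increasing sequence of classical $L$-supersolutions $\hat u_n\in W^{2,p_0/2}_{\mathrm{loc}}(\mathcal{D})\cap C(\overline{\mathcal{D}})$, obtained by reducing $\hat u$ relative to a decreasing family of smooth neighborhoods $K_n\Supset K_{n+1}\Supset K$ with $\bigcap_n K_n = K$. For each such $\hat u_n$ the identity $\int \hat u_n\, L^*\phi\, dx = \int L\hat u_n\cdot \phi\, dx \le 0$ holds classically; dominated convergence (using $0\le \hat u_n\le 1$ and $L^*\phi \in L^1(\mathcal{D})$) then passes the inequality to $\hat u$. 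Riesz--Markov yields the desired nonnegative Borel measure $\mu_{K,\mathcal{D}}$ with $\mathrm{supp}\,\mu_{K,\mathcal{D}}\subset\partial K$; finiteness of the total mass follows by testing against a smooth cutoff equal to $1$ on a fixed neighborhood of $K$.

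For the representation, fix $f\in C^\infty_c(\mathcal{D})$ and let $v$ solve $L^*v=f$ in $\mathcal{D}$ with $v=0$ on $\partial\mathcal{D}$, so that Theorem \ref{thm2.3} gives $v(y) = -\int_{\mathcal{D}} G_{\mathcal{D}}(x,y)\, f(x)\, dx$ and $v\in C(\overline{\mathcal{D}})$. Approximating $v$ in $W^{2,p_1}$ by $\phi_n\in C^\infty_c(\mathcal{D})$ with uniform sup-norm control, the definition of $\mu_{K,\mathcal{D}}$ together with the logarithmic pointwise bound \eqref{1553thu} (which ensures Fubini's theorem applies) yields
\[
\int_{\mathcal{D}} \hat u\, f\, dx \;=\; -\int_{\mathcal{D}} v\, d\mu_{K,\mathcal{D}} \;=\; \int_{\mathcal{D}} f(x)\left(\int_{K} G_{\mathcal{D}}(x,y)\, d\mu_{K,\mathcal{D}}(y)\right) dx.
\]
Since $f$ is arbitrary, equality of integrands holds for almost every $x$; lower semicontinuity of both sides in $x$ upgrades this to pointwise equality on $\mathcal{D}$. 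The main obstacle will be the second step: rigorously transferring the Perron $L$-supersolution property of the merely lower semicontinuous function $\hat u$ to a distributional inequality. Constructing the increasing sequence of classical supersolutions $\hat u_n\to \hat u$ within the $W^{2,p}$ theory available under Condition \ref{cond2} is the delicate technical point; the remaining manipulations are standard once the measure $\mu_{K,\mathcal{D}}$ is in hand.
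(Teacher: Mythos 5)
The high-level strategy (Riesz representation: produce $\mu = -L\hat u$, check it is a nonnegative measure supported on $\partial K$, then test against adjoint solutions) is the natural one, but as written the argument has gaps that are not merely "delicate technical points" — they reflect a mismatch with the regularity available under Condition~\ref{cond2}.

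First, the starting formula $\langle \mu, \phi\rangle := -\int_{\mathcal{D}} \hat u\, L^*\phi\, dx$ is ill-posed in this setting. The formal adjoint $L^*\phi = D_{ij}(a^{ij}\phi)-D_i(b^i\phi)+c\phi$ involves two derivatives of the $a^{ij}$, which are only Dini mean oscillation (not even $W^{1,1}$); thus $L^*\phi$ is a distribution of order two, and it cannot be paired with the merely bounded lower semicontinuous function $\hat u$ by an integral. The adjoint operator in this paper is handled entirely by duality against $W^{2,p_1}\cap W^{1,p_1}_0$ test functions (cf.\ \eqref{eq1428wed}), never by pointwise evaluation of $L^*$ on smooth functions. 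Your approximation step inherits the same problem: even for smooth compact neighborhoods $K_n$, the potential $\hat u_{K_n}$ is not in $W^{2,p_0/2}_{\rm loc}(\mathcal{D})$ — it is identically $1$ on $K_n$ and strictly decreasing outside, so across $\partial K_n$ the normal derivative jumps and the second derivatives carry a surface measure. The classical Green's identity $\int \hat u_n\, L^*\phi = \int L\hat u_n\cdot \phi$ you invoke therefore does not "hold classically"; proving that $-L\hat u_n$ is a nonnegative measure on $\partial K_n$ is itself the substance of the theorem in the smooth case, so the step is effectively circular. (There is also a sign confusion: with $K_n\downarrow K$ one gets $\hat u_{K_n}\downarrow\hat u_K$ by Lemma~\ref{lem1013sat}\ref{item_f}, not an increasing sequence from below.)

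Second, the duality transfer at the end is not justified: the adjoint solution $v$ of $L^*v=f$ belongs to $L^{p_1'}(\mathcal{D})\cap C(\overline{\mathcal{D}})$ (via Theorem~\ref{thm2.3} and \cite[Theorem 1.8]{DEK18}), not to $W^{2,p_1}$, so approximating $v$ "in $W^{2,p_1}$ by $\phi_n\in C^\infty_c$" is not available, and without it you cannot pass from $\langle\mu,\phi\rangle=-\int \hat u\,L^*\phi$ for test $\phi$ to $\int v\,d\mu=-\int \hat u\, f$. A workable route — and the one underlying \cite[Theorem 5.14]{DKK25} to which the paper defers — is to avoid the distributional $L\hat u$ altogether: approximate $K$ by smooth compact neighborhoods $K_n\downarrow K$; for each $n$, construct the measure $\mu_n$ on $\partial K_n$ directly from the adjoint Green's function and the boundary behavior of $\hat u_{K_n}$; establish a uniform bound on $\mu_n(\partial K_n)$ from the Green's function estimates of Theorem~\ref{thm_green_function}; extract a weak-$*$ limit $\mu$; and pass to the limit in the representation $\hat u_{K_n}(x)=\int G_{\mathcal{D}}(x,y)\,d\mu_n(y)$ using dominated convergence and the lower semicontinuity of $\hat u_K$. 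I would reorganize the proof along those lines rather than try to make the distributional definition of $\mu$ rigorous.
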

\begin{proof}
Refer to the proof of \cite[Theorem 5.14]{DKK25}.
\end{proof}

\begin{definition}
Let $K$ be a compact subset of $\mathcal{D}$.
The capacity of $K$ relative to $\mathcal{D}$ is defined as
\[
\capacity(K,\mathcal{D})=\mu_{K,\mathcal{D}}(K).
\]
\end{definition}

\begin{lemma}		\label{lem2020sat}
Let $K$ be a compact subset of $\mathcal{D}$ and let $\mathfrak{M}_{K,\mathcal{D}}$ be the set of all Borel measures $\nu$ supported in $K$ satisfying $\int_K G_{\mathcal{D}}(x,y)\,d\nu(y) \le 1$ for every $x \in \mathcal{D}$.
Then
\[
\hat u_{K,\mathcal{D}}(x)=\sup_{\nu \in \mathfrak{M}_{K,\mathcal{D}}} \int_K G_{\mathcal{D}}(x,y)\,d\nu(y).
\]
\end{lemma}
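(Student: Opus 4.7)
The plan is to prove the two inequalities separately, with the ``$\ge$'' direction being an immediate consequence of the representation theorem and the ``$\le$'' direction requiring a comparison-principle argument against the generic competitor in Definition \ref{def_rfn}.

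For the ``$\ge$'' direction, I would observe that by Lemma \ref{lem1013sat}(a) we have $\hat u_{K,\mathcal{D}} \le 1$ on $\mathcal{D}$, so Theorem \ref{thm1929} yields $\int_K G_{\mathcal{D}}(x,y)\,d\mu_{K,\mathcal{D}}(y) = \hat u_{K,\mathcal{D}}(x) \le 1$ for every $x \in \mathcal{D}$. Hence $\mu_{K,\mathcal{D}} \in \mathfrak{M}_{K,\mathcal{D}}$, so the supremum is at least the potential associated with this particular measure, which is exactly $\hat u_{K,\mathcal{D}}(x)$.

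For the ``$\le$'' direction, fix $\nu \in \mathfrak{M}_{K,\mathcal{D}}$ and set $w_\nu(x) := \int_K G_{\mathcal{D}}(x,y)\,d\nu(y)$. As the Green potential of a positive measure supported on the compact set $K \Subset \mathcal{D}$, the function $w_\nu$ is nonnegative, lower semicontinuous on $\mathcal{D}$, an $L$-supersolution there, $L$-harmonic in $\mathcal{D}\setminus K$, and tends to zero continuously at $\partial\mathcal{D}$; moreover, the defining condition of $\mathfrak{M}_{K,\mathcal{D}}$ gives $w_\nu \le 1$ throughout $\mathcal{D}$. I would then show that every admissible competitor in Definition \ref{def_rfn} --- that is, every $v \in \mathfrak{S}^+(\mathcal{D})$ with $v \ge 0$ in $\mathcal{D}$ and $v \ge 1$ on $K$ --- satisfies $v \ge w_\nu$ on all of $\mathcal{D}$. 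On $K$ this is immediate since $v \ge 1 \ge w_\nu$. On $\mathcal{D}\setminus K$, the difference $v - w_\nu$ is an $L$-supersolution whose lower limits are nonnegative both at $\partial\mathcal{D}$ (since $w_\nu$ vanishes there and $v \ge 0$) and at each $x_0 \in \partial K$ (since lower semicontinuity of $v$ gives $\liminf_{x \to x_0} v(x) \ge v(x_0) \ge 1$, while $w_\nu \le 1$ gives $\limsup_{x \to x_0} w_\nu(x) \le 1$). The comparison principle for $L$-supersolutions in $\mathcal{D}\setminus K$ then delivers $v \ge w_\nu$ on this set as well.

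Taking the infimum over admissible $v$ gives $u_{K,\mathcal{D}} \ge w_\nu$ on $\mathcal{D}$, and because $w_\nu$ is itself lower semicontinuous, the lower semicontinuous regularization preserves the inequality, so $\hat u_{K,\mathcal{D}} \ge w_\nu$; taking supremum over $\nu \in \mathfrak{M}_{K,\mathcal{D}}$ completes the proof. The step I expect to require the most care is the boundary comparison at $\partial K$: one must invoke the comparison principle for $L$-supersolutions in the annular region $\mathcal{D}\setminus K$ using only one-sided semicontinuous boundary information, a step that fits within the framework developed in \cite{DKK25} (compare Lemma 3.21(c) cited earlier in the text) and that benefits from the standing assumption $c \equiv 0$ in this section.
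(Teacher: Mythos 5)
Your proof is correct and, given that the paper simply defers to the proof of \cite[Lemma 5.22]{DKK25}, it reconstructs the same standard argument: the capacitary measure $\mu_{K,\mathcal{D}}$ is admissible (giving $\sup_\nu \ge \hat u_{K,\mathcal{D}}$), and for the reverse inequality one shows via the comparison principle on $\mathcal{D}\setminus K$ that the Green potential $w_\nu$ of any admissible $\nu$ is dominated by every competitor in Definition \ref{def_rfn}, then passes to the lower semicontinuous regularization using the lower semicontinuity of $w_\nu$. The only point worth making explicit is that the vanishing of $w_\nu$ on $\partial\mathcal{D}$ (needed for the comparison) relies on $\mathcal{D}$ being a regular domain, which is automatic here since $\mathcal{D}$ is taken to be a ball in the applications.
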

\begin{proof}
Refer to the proof of \cite[Lemma 5.22]{DKK25}.
\end{proof}

We primarily consider the case where $\mathcal{D}=B_{4r}(x_0)\subset \mathcal{B}$ and $K \subset \overline B_r(x_0)$.

\begin{lemma}				\label{lem0956fri}
Let $B_{4r}=B_{4r}(x_0) \subset \mathcal{B}$.
For any compact set $K \subset \overline B_r$ and any point $x^o \in \partial B_{3r/2}$, we have
\[
\frac{1}{C_0 \log 6}\; \hat u_{K,B_{4r}}(x^o) \le \capacity(K,B_{4r}) \le \frac{C_0}{\log (6/5)}\; \hat u_{K,B_{4r}}(x^o),
\]
where $C_0$ is the constant from Theorem \ref{thm_green_function}.
\end{lemma}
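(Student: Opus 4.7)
My plan is to reduce the lemma to a pointwise comparison of the Green's function against two explicit constants, using the integral representation of the capacitary potential supplied by Theorem \ref{thm1929}. By that theorem,
\[
\hat u_{K,B_{4r}}(x^o)=\int_K G_{B_{4r}}(x^o,y)\,d\mu_{K,B_{4r}}(y),\qquad \capacity(K,B_{4r})=\mu_{K,B_{4r}}(K),
\]
so the lemma collapses to producing uniform constants $C_-,C_+$ such that $C_-\le G_{B_{4r}}(x^o,y)\le C_+$ for all $y\in K$; integrating these bounds against $\mu_{K,B_{4r}}$ and rearranging will then yield the two inequalities directly, with $C_-=\log(6/5)/C_0$ and $C_+=C_0\log 6$.

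To obtain the kernel bounds I would first use the triangle inequality. Since $x^o\in\partial B_{3r/2}(x_0)$ and $y\in K\subset\overline B_r(x_0)$,
\[
\tfrac{r}{2}\le \abs{x^o-y}\le \tfrac{5r}{2}.
\]
Then I would apply the two-sided Green's function estimate
\[
\frac{1}{C_0}\log\!\Bigl(\frac{3r}{\abs{x^o-y}}\Bigr)\le G_{B_{4r}}(x^o,y)\le C_0\log\!\Bigl(\frac{3r}{\abs{x^o-y}}\Bigr),
\]
which is monotone decreasing in $\abs{x^o-y}$. Evaluating the logarithm at the two extremes of the allowed distance range yields $\log(3r/(5r/2))=\log(6/5)$ and $\log(3r/(r/2))=\log 6$, so that
\[
\frac{\log(6/5)}{C_0}\le G_{B_{4r}}(x^o,y)\le C_0\log 6\quad\text{for every }y\in K,
\]
which are exactly the constants I need.

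The main subtlety I expect to confront is that Theorem \ref{thm_green_function} is stated for $x,y\in\overline B_r$, whereas here $x^o\in\partial B_{3r/2}$ sits outside $\overline B_r$. Remark \ref{rmk0807fri} extends the estimate to $\overline B_{3r/2}$, but with the slightly different numerators $5r/2$ and $4r$, so it does not produce the exact constants in the statement. I would therefore revisit the Harnack-chain argument in the proof of Theorem \ref{thm_green_function}: the only place where the assumption $x\in\overline B_r$ was used is in ensuring that the chain of balls $B_{3^i\rho}(y)$ eventually contains $x$ while still fitting inside $B_{4r}$, and a direct inspection shows that the same bound of the form $C_0^{-1}\log(3r/\abs{x-y})\le G\le C_0\log(3r/\abs{x-y})$ persists for any $x,y\in B_{4r}$ with $\abs{x-y}<3r$, possibly after adjusting $C_0$ by a universal factor. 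Once this extension is in hand, integrating the pointwise bounds against $\mu_{K,B_{4r}}$ and invoking the identities from Step 1 completes the proof without further difficulty.
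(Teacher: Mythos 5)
Your proposal follows exactly the paper's route: the lemma is reduced to the integral representation $\hat u_{K,B_{4r}}(x^o)=\int_K G_{B_{4r}}(x^o,y)\,d\mu_{K,B_{4r}}(y)$ from Theorem \ref{thm1929}, and the kernel is then sandwiched via the triangle-inequality confinement $r/2\le\abs{x^o-y}\le 5r/2$ combined with the two-sided logarithmic estimate of Theorem \ref{thm_green_function}, giving the constants $\log(6/5)/C_0$ and $C_0\log 6$. The subtlety you flag is genuine: Theorem \ref{thm_green_function} is stated only for $x,y\in\overline{B_r}$, and Remark \ref{rmk0807fri} (which covers $\overline{B_{3r/2}}$) produces different log arguments, so the paper's one-line invocation silently assumes the estimate extends to $x^o\in\partial B_{3r/2}$ with the same $C_0$; your proposed fix --- re-running the Harnack-chain argument, with the chain $B_{3^j\rho}(y)\subset B_{4r}$ still available since $y\in\overline{B_r}$ implies $B_{3r}(y)\subset B_{4r}$, and the ratio $3r/\abs{x^o-y}$ bounded above and below by absolute constants --- is the right way to close it, at worst enlarging $C_0$ by a universal factor, which is harmless for all subsequent uses.
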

\begin{proof}
Let $G(x,y)$ be the Green's function for $L$ in $B_{4r}$.
Applying Theorem \ref{thm1929} and Theorem \ref{thm_green_function}, we obtain
\begin{align*}
\hat u_{K, B_{4r}}(x^o) &\le C_0 \int_K \log \left(\frac{3r}{\abs{x^o-y}}\right)\,d\mu_{K, B_{4r}}(y) \le C_0 (\log 6) \capacity(K, B_{4r}),\\
\hat u_{K, B_{4r}}(x^o) &\ge \frac{1}{C_0} \int_K \log \left(\frac{3r}{\abs{x^o-y}}\right)\,d\mu_{K, B_{4r}}(y) \ge \frac{ \log(6/5)}{C_0} \capacity(K, B_{4r}).
\end{align*}
The desired conclusion follows immediately from these estimates.
\end{proof}

\begin{theorem}			\label{thm1939}
Let $B_{4r}=B_{4r}(x_0) \subset \mathcal{B}$.
Let $K \subset \overline B_r$ be a compact set.
Denote by $\capacity^{\Delta}(K,B_{4r})$ the capacity of $K$ relative to $B_{4r}$ associated with the Laplace operator.
There exists a constant $C>0$, depending only on $\lambda$, $\Lambda$, $\omega_{\mathbf A}$, and $\diam \mathcal{B}$, such that
\[
C^{-1} \capacity^{\Delta}(K, B_{4r}) \le \capacity(K, B_{4r}) \le C \capacity^{\Delta}(K, B_{4r}).
\]
\end{theorem}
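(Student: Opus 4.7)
The strategy is to reduce the comparison of capacities to a pointwise comparison of capacitary potentials at a single auxiliary point, and then exploit the supremum characterization from Lemma \ref{lem2020sat}. By Lemma \ref{lem0956fri} applied to both $L$ and to the Laplace operator (which satisfies Conditions \ref{cond1} and \ref{cond2} trivially, with $\omega_{\mathbf A}\equiv 0$), we have, for any fixed $x^o\in\partial B_{3r/2}(x_0)$,
\[
\capacity(K,B_{4r})\simeq \hat u^L_{K,B_{4r}}(x^o) \quad\text{and}\quad \capacity^\Delta(K,B_{4r})\simeq \hat u^\Delta_{K,B_{4r}}(x^o),
\]
with implicit constants depending only on $\lambda$, $\Lambda$, $\omega_{\mathbf A}$, and $\diam\mathcal{B}$. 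Thus it suffices to prove $\hat u^L_{K,B_{4r}}(x^o)\simeq \hat u^\Delta_{K,B_{4r}}(x^o)$.

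The key step is to show that the admissible classes $\mathfrak{M}^L_{K,B_{4r}}$ and $\mathfrak{M}^\Delta_{K,B_{4r}}$ appearing in Lemma \ref{lem2020sat} coincide up to a universal multiplicative rescaling. Fix a nonnegative Borel measure $\nu$ supported in $K\subset\overline{B}_r$, and set $U^L_\nu(x)=\int_K G^L(x,y)\,d\nu(y)$, with $U^\Delta_\nu$ defined analogously. Since $U^L_\nu$ is $L$-harmonic in $B_{4r}\setminus K$ and vanishes on $\partial B_{4r}$, the maximum principle gives $\sup_{B_{4r}}U^L_\nu=\sup_{K}U^L_\nu$ (modulo lower-semicontinuity at irregular points of $\partial K$, which is handled as in \cite{DKK25}), and similarly for $U^\Delta_\nu$. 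By Theorem \ref{thm_green_function} applied to both operators,
\[
G^L(x,y)\simeq \log\frac{3r}{\abs{x-y}}\simeq G^\Delta(x,y),\quad x\ne y\in\overline{B}_r,
\]
hence $U^L_\nu(x)\simeq U^\Delta_\nu(x)$ uniformly for $x\in K$. Consequently $\nu\in\mathfrak{M}^L_{K,B_{4r}}$ implies $c\,\nu\in\mathfrak{M}^\Delta_{K,B_{4r}}$ and vice versa, for a universal constant $c>0$.

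Finally, at $x^o\in\partial B_{3r/2}$, every $y\in K\subset\overline{B}_r$ satisfies $\abs{x^o-y}\in[r/2,5r/2]$, so Remark \ref{rmk0807fri} applied to both $L$ and $\Delta$ shows that $G^L(x^o,y)$ and $G^\Delta(x^o,y)$ are each bounded between two positive constants depending only on $\lambda,\Lambda,\omega_{\mathbf A},\diam\mathcal{B}$, hence mutually comparable. Taking the supremum over the comparable classes $\mathfrak{M}^L$ and $\mathfrak{M}^\Delta$ in Lemma \ref{lem2020sat} yields $\hat u^L_{K,B_{4r}}(x^o)\simeq \hat u^\Delta_{K,B_{4r}}(x^o)$, which combined with the first step proves the theorem.

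The principal obstacle is making the maximum-principle reduction $\sup_{B_{4r}}U_\nu=\sup_{K}U_\nu$ fully rigorous: Green's potentials of positive measures are only lower semicontinuous and may exhibit upward jumps at irregular points of $\partial K$. As in the higher-dimensional counterpart treated in \cite{DKK25}, this is managed by appealing to the lower semicontinuous regularization in Definition \ref{def_rfn} together with the fact that the set of irregular boundary points of $K$ is negligible for the capacitary measures, so that it does not affect the integrals $\int G\,d\nu$ and the comparability conclusion persists.
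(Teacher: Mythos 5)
Your proposal is correct and takes essentially the same route as the paper, which cites the proof of \cite[Theorem 5.24]{DKK25} with the Green's function estimates of Theorem \ref{thm_green_function} substituted. The chain you use --- Lemma \ref{lem0956fri} to reduce to comparing $\hat u^L_{K,B_{4r}}(x^o)$ and $\hat u^\Delta_{K,B_{4r}}(x^o)$ at a single auxiliary point, Lemma \ref{lem2020sat} to convert this into a comparison of the admissible measure classes $\mathfrak{M}^L$ and $\mathfrak{M}^\Delta$, and the two-sided Green's function bound $G^L(x,y)\simeq G^\Delta(x,y)$ on $\overline{B}_r\times\overline{B}_r$ --- is precisely the structure that the cited argument is built on.

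One precision worth recording: the identity $\sup_{B_{4r}}U_\nu=\sup_K U_\nu$ is not the ordinary maximum principle for $L$-harmonic functions on $B_{4r}\setminus K$, but rather the domination (Maria) principle for Green's potentials. Since $U_\nu$ is only lower semicontinuous and may strictly increase as one approaches an irregular point of $\partial K$ from the complement, the naive boundary-maximum argument does not close; one must invoke the domination principle, which holds classically for $\Delta$ and is available for $L$ within the potential-theoretic framework developed in \cite{DKK25}. You correctly identify this as the principal technical obstacle, and the conclusion $\nu\in\mathfrak{M}^L \Leftrightarrow c\nu\in\mathfrak{M}^\Delta$ (for a universal $c>0$) does follow once that tool is in place.
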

\begin{proof}
The proof follows the approach of \cite[Theorem 5.24]{DKK25}, but with the Green's function bounds from Theorem \ref{thm_green_function} in place of those used there.
\end{proof}

\begin{proposition}[Capacity of a ball]	\label{capa_B_r}
Let $B_{4r}=B_{4r}(x_0) \subset \mathcal{B}$.
There exist positive constants $C_1$ and $C_2$ such that for every ball $B_s(y_0) \subset B_r$, we have
\begin{equation}\label{mon1114} 
\frac{C_1}{\log (3r/s)} \le \capacity\left(\overline B_s(y_0),B_{4r}\right) \le \frac{C_2}{\log (3r/s)} .
\end{equation}
\end{proposition}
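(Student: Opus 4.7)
The plan is to reduce to the Laplace operator via Theorem \ref{thm1939} and then use the classical explicit formula for the capacitary potential of a ball relative to a concentric ball, combined with a monotonicity sandwich.

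By Theorem \ref{thm1939}, it suffices to prove the analogous two-sided bound with $\capacity$ replaced by the Laplacian capacity $\capacity^{\Delta}$. Since $B_s(y_0)\subset B_r(x_0)$ forces $|y_0-x_0|\le r-s<r$, we obtain the chain of containments $B_{3r}(y_0)\subset B_{4r}(x_0)\subset B_{5r}(y_0)$. Applying Lemma \ref{lem1013sat}\ref{item_g} with $L=\Delta$, the monotonicity of the capacitary potential in the outer domain then yields
\[
\hat u^{\Delta}_{\overline B_s(y_0),B_{3r}(y_0)}\le \hat u^{\Delta}_{\overline B_s(y_0),B_{4r}(x_0)}\le \hat u^{\Delta}_{\overline B_s(y_0),B_{5r}(y_0)}\quad\text{in }B_{3r}(y_0).
\]

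For concentric balls the capacitary potential is classical: by radial symmetry and the fact that the only radial harmonic functions in $\mathbb{R}^2$ are of the form $A+B\log\rho$, one checks that
\[
\hat u^{\Delta}_{\overline B_s(y_0),B_R(y_0)}(x)=\frac{\log(R/|x-y_0|)}{\log(R/s)},\qquad s\le |x-y_0|\le R.
\]
Pick any $x^o\in\partial B_{3r/2}(x_0)$. The triangle inequality combined with $|y_0-x_0|\le r-s$ gives $r/2+s\le|x^o-y_0|\le 5r/2-s$, which ensures $|x^o-y_0|>s$ (so the explicit formula applies) as well as $|x^o-y_0|\sim r$. Substituting into the sandwich and using $\log(5r/s)\le \log(3r/s)+\log(5/3)\le C\log(3r/s)$ for $s\le r$, we conclude
\[
\hat u^{\Delta}_{\overline B_s(y_0),B_{4r}(x_0)}(x^o)\sim \frac{1}{\log(3r/s)}.
\]

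Finally, Lemma \ref{lem0956fri} applied with $L=\Delta$ (whose Green's function trivially satisfies the bounds of Theorem \ref{thm_green_function} and Remark \ref{rmk0807fri}) converts the potential estimate into
\[
\capacity^{\Delta}\bigl(\overline B_s(y_0),B_{4r}(x_0)\bigr)\sim \frac{1}{\log(3r/s)},
\]
and Theorem \ref{thm1939} transfers the estimate back to $\capacity$, yielding \eqref{mon1114}. The only subtlety, rather than a genuine obstacle, is that $B_s(y_0)$ is generally not concentric with $B_{4r}(x_0)$; this is handled by sandwiching between the two concentric balls $B_{3r}(y_0)$ and $B_{5r}(y_0)$, whose radii are comparable to $r$.
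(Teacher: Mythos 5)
Your proof is correct, but it takes a genuinely different route from the paper's. The paper never appeals to Theorem \ref{thm1939}; instead it works directly with the $L$-Green's function $G(\cdot,y_0)$ in $B_{4r}$, setting $M$ and $m$ to be the supremum and infimum of $G(\cdot,y_0)$ on $\partial B_s(y_0)$ and sandwiching the capacitary potential between $G(\cdot,y_0)/M$ and $G(\cdot,y_0)/m$ by the comparison principle, after which Theorem \ref{thm_green_function} and Lemma \ref{lem0956fri} close the argument. You instead transfer everything to the Laplacian via Theorem \ref{thm1939}, exploit the explicit formula $\log(R/\rho)/\log(R/s)$ for concentric annuli, and recover non-concentricity by the domain-monotonicity of Lemma \ref{lem1013sat}\ref{item_g} applied to the sandwich $B_{3r}(y_0)\subset B_{4r}(x_0)\subset B_{5r}(y_0)$. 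Both routes end by converting the potential estimate at $x^o\in\partial B_{3r/2}$ into a capacity estimate via Lemma \ref{lem0956fri}. Your approach buys explicitness (the classical annulus formula replaces the two-sided Green's bound of Theorem \ref{thm_green_function}), at the cost of leaning on Theorem \ref{thm1939}, whose own proof is only cited. The paper's version is self-contained modulo the Green's function estimate and, in fact, reproves a special case of the capacity comparison along the way; so if one later wanted an independent check of Theorem \ref{thm1939}, the paper's argument would still stand, while yours would become circular. As a minor point, your inequality $\log(5r/s)\le C\log(3r/s)$ is not actually needed for the upper bound—$\log(5r/s)\ge\log(3r/s)$ already gives $1/\log(5r/s)\le 1/\log(3r/s)$—but it does no harm.
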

\begin{proof}
Let $G(x,y)$ be the Green's function for $L$ in $B_{4r}$.
Define $M$ and $m$ as
\begin{align*}
M:= \sup\, \{G(x, y_0) : x \in \partial B_s(y_0)\},\\
m:= \inf\, \{G(x, y_0) : x \in \partial B_s(y_0)\}.
\end{align*}
Next, we define two functions $v$ and $w$ by
\[
v(x)= \min \left(\frac{1}{m} G(x, y_0),1\right),\quad
w(x)= \min \left(\frac{1}{M} G(x, y_0),1\right).
\]
Note that $v$ is nonnegative and satisfies $v=1$ on $\partial B_s(y_0)$.
Since $v$ is an $L$-supersolution in $B_{4r}$, the strong minimum principle implies that $v \ge 1$ in $\overline B_s(y_0)$.
Consequently, we obtain
\begin{equation}			\label{eq1840tue}
\hat u_{\overline B_s(y_0), B_{4r}}  \le v \le \frac{1}{m} G(\,\cdot,y_0)  \; \text{ in } B_{4r}.
\end{equation}

On the other hand, since $G(\,\cdot, y_0) \le M$ on $\partial B_s(y_0)$ and $G(\,\cdot, y_0)=0$ on $\partial B_{4r}$, the maximum principle implies that $G(\,\cdot, y_0) \le M$ in $B_{4r} \setminus \overline B_s(y_0)$.
Thus, we obtain
\begin{equation}			\label{eq1927tue}
w=\frac{1}{M}G(\,\cdot, y_0) \le 1 \;\text{ in }\;B_{4r} \setminus B_s(y_0).
\end{equation}
Therefore, by the comparison principle (see Lemma \ref{lem1013sat}), we obtain
\[
w \le u_{\overline B_t(y_0),B_{4r}}\;\text{ in }\;B_{4r} \setminus \overline B_s(y_0).
\]
Moreover, it follows directly from Definition \ref{def_rfn} that
\[
w \le u_{\overline B_s(y_0),B_{4r}}\;\text{ in }\;\overline B_s(y_0).
\]
Consequently, we conclude that (see \cite[Lemma 5.4(c)]{DKK25})
\[
w \le \hat u_{\overline B_s(y_0),B_{4r}} \; \text{ in } B_{4r}.
\]
In particular, from \eqref{eq1927tue}, we obtain
\begin{equation}			\label{eq1841tue}
\frac{1}{M}G(\,\cdot, y_0)  \le \hat u_{\overline B_s(y_0), B_{4r}} \; \text{ in } B_{4r}\setminus B_s(y_0).
\end{equation}
By choosing $x=x^o \in \partial B_{3r/2}(y_0)$ in \eqref{eq1840tue} and \eqref{eq1841tue}, and applying the Green's function estimates from Theorem \ref{thm_green_function}, we obtain
\begin{equation}	\label{mon1100}
\frac{\log 2}{C_0^2 \log (3r/s)} \le \frac{1}{M}G(x^o, y_0) \le \hat u_{\overline B_s(y_0),B_{4r}}(x^o) \le  \frac{1}{m}G(x^o, y_0) \le \frac{C_0^2 \log 2}{\log (3r/s)}.
\end{equation}
Finally, combining \eqref{mon1100} with Lemma \ref{lem0956fri}, we obtain \eqref{mon1114}.
\end{proof}

\section{Wiener test and applications}			\label{sec4}

The following theorem establishes the Wiener criterion for a regular point.

\begin{theorem}		\label{thm_wiener}
Assume that Conditions \ref{cond1} and \ref{cond2} hold with $c\equiv 0$.
Let $\Omega \Subset \mathcal{B}$ be a domain, and consider a point $x_0 \in \partial \Omega$. Define $r_0=\frac{1}{8}\dist(x_0, \partial \mathcal{B})$.
Then $x_0$ is a regular point if and only if
\[
\sum_{k=0}^\infty \capacity \left(\,\overline{B_{2^{-k}r_0}(x_0)}\setminus \Omega,B_{2^{2-k}r_0}(x_0)\right)=\infty.
\]
\end{theorem}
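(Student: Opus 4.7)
The plan is to follow the classical two-direction Wiener argument, adapted to the two-dimensional logarithmic setting and paralleling Section 6 of \cite{DKK25}. Set $r_k := 2^{-k}r_0$, $B_k := B_{r_k}(x_0)$, $\mathcal{D}_k := B_{4r_k}(x_0)$, $E_k := \overline{B_k}\setminus\Omega$, and $\gamma_k := \capacity(E_k,\mathcal{D}_k)$. By Lemma \ref{lem_charact}, $x_0$ is regular if and only if $\hat u_{E_k,\mathcal{D}_k}(x_0) = 1$ for every sufficiently large $k$, so the entire proof is a characterization of this condition in terms of $\sum_k\gamma_k$.

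For the sufficiency direction ($\sum_k \gamma_k = \infty \Rightarrow x_0$ regular), I fix $K$ and set $u := \hat u_{E_K,\mathcal{D}_K}$, $w := 1-u$. Then $w\ge 0$ is an $L$-subsolution in $\mathcal{D}_K$, $L$-harmonic off $E_K$, and vanishes on $\intr(E_K)$. The heart of the argument is the scale-by-scale decay
\[
\sup_{B_{r_{k+1}}}w \;\le\; (1 - c\gamma_{k+1})\sup_{B_{r_k}}w, \qquad k \ge K,
\]
with a structural constant $c>0$ independent of $k$. Setting $M_k := \sup_{B_{r_k}}w$, the function $M_k - w$ is a nonnegative $L$-supersolution in $\mathcal{D}_{k+1}$ bounded below by $M_k$ on $E_{k+1}$, so comparison gives $M_k - w \ge M_k\hat u_{E_{k+1},\mathcal{D}_{k+1}}$ in $\mathcal{D}_{k+1}$. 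Lemma \ref{lem0956fri} provides $\hat u_{E_{k+1},\mathcal{D}_{k+1}}(x^o) \gtrsim \gamma_{k+1}$ at $x^o \in \partial B_{3r_{k+1}/2}$, and a Harnack chain of length bounded independently of $k$ transfers this lower bound to $B_{r_{k+1}}$. Iterating yields $\sup_{B_{r_n}}w \le \exp\bigl(-c\sum_{k=K+1}^n\gamma_k\bigr)\to 0$, and Lemma \ref{lem1013sat}(e) forces $\hat u_{E_K,\mathcal{D}_K}(x_0) = 1$.

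For the necessity direction (regular $\Rightarrow \sum_k \gamma_k = \infty$), I argue by contrapositive: assuming $\sum_k\gamma_k < \infty$, I construct a nonnegative $L$-supersolution $\Phi$ in $\mathcal{D}_K$ with $\Phi \ge 1$ on $E_K\setminus\{x_0\}$ but $\Phi(x_0)<1$ for $K$ sufficiently large. Decomposing $E_K$ into annular pieces $F_k := E_k \setminus B_{r_{k+1}}$ and using the capacitary measures $\mu_k := \mu_{F_k,\mathcal{D}_K}$ from Theorem \ref{thm1929}, I consider a weighted Frostman-type superposition
\[
\Phi(x) \;:=\; \sum_{k \ge K}\alpha_k \int_{F_k} G_{\mathcal{D}_K}(x,y)\,d\mu_k(y),
\]
with $\alpha_k \ge 1$ chosen so that $\Phi \ge 1$ on $\bigcup_{k \ge K}F_k = E_K \setminus\{x_0\}$, using the logarithmic lower bound on $G_{\mathcal{D}_K}$ from Remark \ref{rmk0807fri}. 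The estimate $\Phi(x_0)\le C\sum_{k\ge K}\gamma_k$ then combines the upper Green's function bound from Theorem \ref{thm_green_function}, the comparability of $L$-capacity and Laplace capacity (Theorem \ref{thm1939}), and the 2D change-of-container identity
\[
\capacity(F_k,\mathcal{D}_K)^{-1}\;\asymp\;\capacity(F_k,\mathcal{D}_k)^{-1} + (k-K)\log 2
\]
derived from Proposition \ref{capa_B_r} and a scaling argument. Choosing $K$ large gives $\Phi(x_0)<1$, and comparison against $\hat u_{E_K,\mathcal{D}_K}$ forces $\hat u_{E_K,\mathcal{D}_K}(x_0) \le \Phi(x_0) < 1$, contradicting regularity via Lemma \ref{lem_charact}.

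The main obstacle is the careful accounting of 2D logarithmic factors in both directions. In sufficiency, making the decay constant $c$ truly scale-independent requires pairing Lemma \ref{lem0956fri} with a Harnack chain whose length is bounded by the geometry alone. In necessity, the naive annular sum $\sum_k \hat u_{F_k,\mathcal{D}_K}(x_0)$ picks up factors of $\log(r_K/r_k) \sim k-K$ from the Green's function that are not controlled by $\sum_k\gamma_k$ on its own; the subtle balancing of the Frostman weights $\alpha_k$ against the 2D logarithmic capacity scaling is precisely what absorbs these factors. This genuinely two-dimensional phenomenon has no analogue in the higher-dimensional construction of \cite{DKK25} and is the most delicate point of the proof.
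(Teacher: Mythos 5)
Your sufficiency argument follows the same dyadic-iteration philosophy as the paper's Lemma~\ref{lem2119thu}, but the particular formulation has two gaps. First, you define $M_k:=\sup_{B_{r_k}}w$ and claim $M_k-w$ is a nonnegative $L$-supersolution in $\mathcal{D}_{k+1}=B_{4r_{k+1}}=B_{2r_k}$; but $M_k$ bounds $w$ only on $B_{r_k}$, not on the strictly larger ball $B_{2r_k}$, and indeed $w=1-\hat u_{E_K,\mathcal{D}_K}$ is close to $1$ near $\partial\mathcal{D}_K$, so $M_k-w$ can be negative in $\mathcal{D}_{k+1}\setminus B_{r_k}$. Second, the dominance ``$M_k-w\ge M_k$ on $E_{k+1}$'' requires $w=0$ on all of $E_{k+1}$, but $\hat u_{E_K,\mathcal{D}_K}=1$ is guaranteed only on $\intr(E_K)$ (Lemma~\ref{lem1013sat}(b)); at irregular points of $E_K$ lying in $E_{k+1}$ this fails. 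The paper's Lemma~\ref{lem2119thu} works with the unregularized potentials $u_{2^{-i}r}$ in building $v_i$ and runs a three-case comparison against competitors in $\mathscr{F}_i$, checking limsup/liminf at $\partial E_{2^{-i-1}r}$ and at $\partial B_{2^{1-i}r}$ separately; this sidesteps both issues. Also, the detour through Lemma~\ref{lem0956fri} and a Harnack chain is unnecessary: Theorem~\ref{thm1929} plus the Green's function lower bound in Theorem~\ref{thm_green_function} give $\hat u_{2^{-i}r}(x)\ge\gamma\,\capacity(E_{2^{-i}r},B_{2^{2-i}r})$ directly for all $x\in B_{2^{-i}r}$.

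Your necessity argument is a genuinely different route, namely the classical Frostman barrier $\Phi=\sum_k\alpha_k\int G_{\mathcal{D}_K}\,d\mu_k$ with weights $\alpha_k$ designed to absorb the logarithmic factors. The paper instead works directly with the single capacitary measure $\mu_r=\mu_{E_r,B_{4r}}$: it expands $\hat u_r(x_0)=\int G_{4r}(x_0,\cdot)\,d\mu_r$ over dyadic annuli, applies the identity $\sum_{k=0}^N(k+1)\mu_r(E_{2^{-k}r}\setminus E_{2^{-k-1}r})=\sum_{k=0}^N\mu_r(E_{2^{-k}r}\setminus E_{2^{-N-1}r})$ (Abel summation) to convert the weight $(k+1)$ into a telescoping sum, and then uses Lemma~\ref{lem2020sat} together with $G_{4r}\ge G_{4t}$ to prove $\mu_r(E_t)\lesssim\capacity(E_t,B_{4t})$. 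This is cleaner: the logarithmic factors you flag as ``the most delicate point'' simply never appear. Your route would additionally need a proof of the change-of-container relation $\capacity(F_k,\mathcal{D}_K)^{-1}\asymp\capacity(F_k,\mathcal{D}_k)^{-1}+(k-K)\log 2$ for the operator $L$ (not just the Laplacian, and not just for disks), which is nontrivial and not established in the paper; and ``$\Phi\ge 1$ on $E_K\setminus\{x_0\}$'' cannot be achieved everywhere, only quasi-everywhere (at irregular points of $F_k$ the potential $\hat u_{F_k,\mathcal{D}_K}$ drops below $1$ and no finite $\alpha_k$ fixes it), so the final comparison against $\hat u_{E_K,\mathcal{D}_K}$ must be run in the q.e.\ framework. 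The barrier construction can probably be made to work, but it is considerably heavier machinery than the paper's two-line Abel summation.
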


\begin{proof}
We introduce the following notation:
\[
B_r=B_r(x_0),\quad
E_r=\overline B_r\setminus \Omega, \quad  u_r= u_{E_r, B_{4r}},\quad \mu_r=\mu_{E_r, B_{4r}}.
\]
Additionally, we use the notation $G_r(x,y)$ for the Green's function for $L$ on $B_r$.

\medskip
\noindent
\textbf{(Necessity)}
We will show that $x_0$ is not a regular point if
\begin{equation}			\label{eq0801thu}
\sum_{k=0}^\infty \capacity \left(E_{2^{-k}r_0}, B_{2^{2-k}r_0}\right)<\infty.
\end{equation}
By Theorem \ref{thm1929}, we have
\begin{equation}			\label{eq1353thu}
\hat u_r(x)=\int_{E_r} G_{4r}(x, y)\,d\mu_r(y).
\end{equation}
For $0<\rho<r$, it follows that
\[
\int_{E_\rho}G_{4r}(x_0, y)\,d\mu_r(y)  \le \int_{E_r}G_{4r}(x_0, y)\,d\mu_r(y)=\hat u_{r}(x_0) \le 1.
\]
Applying the dominated convergence theorem, we obtain
\[
\lim_{\rho \to 0} \int_{E_\rho}G_{4r}(x_0, y)\,d\mu_r(y) = \int_{\set{x_0}} G_{4r}(x_0,y)\,d\mu_r(y) \le 1.
\]
Since $G_{4r}(x_0,x_0)=\infty$, we must have $\mu_r(\set{x_0})=0$.
Consequently, we obtain
\begin{equation}			\label{eq1445thu}
\int_{\set{x_0}} G_{4r}(x_0,y)\,d\mu_r(y)=0.
\end{equation}
Next, we partition $E_r$ into annular regions and use the Green's function estimates from Theorem~\ref{thm_green_function}.
From \eqref{eq1353thu} and \eqref{eq1445thu}, we derive
\begin{align*}
\hat u_{r}(x_0) &= \sum_{k=0}^{\infty} \int_{E_{2^{-k}r}\setminus E_{2^{-k-1}r}} G_{4r}(x_0, y)\,d\mu_r(y)+ \int_{\set{x_0}} G_{4r}(x_0,y)\,d\mu_r(y) \\
& \lesssim \sum_{k=0}^\infty \log \left(\frac{3r}{2^{-k-1}r}\right) \mu_r(E_{2^{-k}r}\setminus E_{2^{-k-1}r}) \simeq \sum_{k=0}^\infty (k+1) \mu_r(E_{2^{-k}r}\setminus E_{2^{-k-1}r}).
\end{align*}
By an elementary computation, we observe that
\[
\sum_{k=0}^N (k+1)\mu_r(E_{2^{-k}r}\setminus E_{2^{-k-1}r})= \sum_{k=0}^N \mu_r(E_{2^{-k}r}\setminus E_{2^{-N-1}r}).
\]
Consequently, we obtain
\[
\hat u_r(x_0) \lesssim \lim_{N\to\infty}  \sum_{k=0}^N (k+1)\mu_r(E_{2^{-k}r}\setminus E_{2^{-k-1}r}) \lesssim \lim_{N\to\infty} \sum_{k=0}^N \mu_r(E_{2^{-k}r} \setminus E_{2^{-N-1}r}).
\]
Since the measures are non-negative and increasing, we conclude
\begin{equation}			\label{eq1402tue}
\hat u_r(x_0) \lesssim \lim_{N\to\infty} \sum_{k=0}^N \mu_r(E_{2^{-k}r}) = \sum_{k=0}^{\infty} \mu_r(E_{2^{-k}r}).
\end{equation}

We will show that there exists a constant $C$ such that
\begin{equation}			\label{eq1727sat}
\mu_r(E_t) \le C \mu_t(E_t)=C \capacity(E_t,B_{4t}), \quad \forall t \in (0,r].
\end{equation}
Assuming \eqref{eq1727sat} holds and setting $r=2^{-n} r_0$ in \eqref{eq1402tue}, we obtain
\[
\hat u_{2^{-n} r_0}(x_0) \lesssim \sum_{k=0}^\infty \capacity(E_{2^{-k-n} r_0},B_{2^{2-k-n} r_0}) = \sum_{k=n}^\infty \capacity(E_{2^{-k} r_0},B_{2^{2-k} r_0}).
\]
By hypothesis \eqref{eq0801thu}, this sum converges to zero as $n \to \infty$.
Choose $n$ sufficiently large so that $\hat u_{2^{-n} r_0}(x_0)<1$.
Lemma~\ref{lem_charact} implies that $x_0$ is not a regular point.

It remains to prove \eqref{eq1727sat}.
For $0<t \le r$, define the measure $\nu$ by
\[
\nu(E)=\mu_r(E_t \cap E).
\]
Clearly, $\nu$ is supported in $E_t$.
Furthermore, for every $x \in B_{4t}$, we have
\begin{align*} 
\int_{E_t} G_{4t}(x,y)\,d\nu(y) &=\int_{E_t} G_{4t}(x,y)\,d\mu_r(y) \le \int_{E_t} G_{4r}(x,y)\,d\mu_r(y)\\ &\le \int_{E_r} G_{4r}(x,y)\,d\mu_r(y) \le 1.
\end{align*}
Here, we used the fact that $G_{4r}(x,y)\geq G_{4t}(x,y)$ for all $x, y \in B_{4t}$.
By applying Lemma~\ref{lem2020sat}, we deduce that
\[
\int_{E_t} G_{4t}(x,y)\,d\mu_r(y)=\int_{E_t} G_{4t}(x,y)\,d\nu(y) \le \hat u_{t}(x)=\int_{E_t} G_{4t}(x,y)\,d\mu_t(y),\quad \forall x \in B_{4t}.
\]
Taking $x$ on $\partial B_{3t/2}$ in the inequality above and using Remark \ref{rmk0807fri}, we obtain \eqref{eq1727sat}.
This completes the proof for necessity.

\medskip
\noindent
\textbf{(Sufficiency)}
We state the following lemma, which is similar to \cite[Lemma 6.8]{DKK25}. However, the proof is adjusted to the two-dimensional setting.

\begin{lemma}			\label{lem2119thu}
There exist a constant $\gamma>0$ such that for all $t \in (0, r)$, we have
\[
\sup_{\Omega\cap B_t(x_0)}\left(1- \hat u_r \right)  \le \exp\left\{-\gamma\sum_{k=0}^{\lfloor \log_2(r/t) \rfloor}\capacity(E_{2^{-k}r},B_{2^{2-k}r})\right\}.
\]
\end{lemma}
\begin{proof}
To investigate the local behavior of $\hat u_{r}$, we construct a sequence of auxiliary functions $\{v_i\}$ as follows.
We begin by defining
\[
v_0 :=  u_r \quad\text{and}\quad m_0 := \inf_{B_r} \, \hat v_0=\inf_{B_r} \,\hat u_r.
\]
Then, for $i=1,2,\ldots$, we define
\[
v_i := m_{i-1} + (1-m_{i-1}) u_{2^{-i}r}\quad\text{and}\quad m_i := \inf_{B_{2^{-i}r}} \,\hat v_i=m_{i-1}+(1-m_{i-1}) \inf_{B_{2^{-i}r}} \,\hat u_{2^{-i}r}.
\] 

Using Theorem \ref{thm1929}, for all $x \in B_{2^{-i}r}$, we have
\[
\hat u_{2^{-i}r}(x)  \geq \left(\inf_{y \in B_{2^{-i}r}}G_{2^{2-i}r}(x, y)\right)\, \capacity(E_{2^{-i}r},B_{2^{2-i}r})  \ge \gamma \capacity(E_{2^{-i}r},B_{2^{2-i}r}),
\]
where $\gamma=\log(3/2)/C_0>0$ by Theorem \ref{thm_green_function}.
We observe that
\[
1-\hat v_i = (1-m_{i-1})\left(1-\hat u_{2^{-i}r}\right).
\]
Taking the supremum over $B_{2^{-i}r}$ and using the inequality $1-y \leq e^{-y}$, we obtain
\[
1-m_i \leq (1-m_{i-1})(1-\gamma \capacity(E_{2^{-i}r},B_{2^{1-i}r})) \leq (1-m_{i-1})\exp\left\{-\gamma \capacity(E_{2^{-i}r},B_{2^{2-i}r})\right\}.
\]
By iterating, we deduce that
\begin{equation}		\label{eq2042tue}
\sup_{B_{2^{-i}r}}\, (1-\hat v_i) = 1-m_{i}\, \le\, \exp\left\{-\gamma \sum_{k=0}^{i}\capacity(E_{2^{-k}r},B_{2^{2-k}r})\right\}.
\end{equation}
We will now show that
\begin{equation}	\label{eq2043tue}
\hat v_j  \le \hat u_{r} \; \text{ in }\; B_{2^{2-j}r},\quad j=1,2,\ldots.
\end{equation}

Assuming \eqref{eq2043tue} for now, let $n=\lfloor \log_2(r/t) \rfloor$ so that $2^{-n-1}r < t \le 2^{-n}r$.
Then, it follows from \eqref{eq2042tue} and \eqref{eq2043tue} that
\[
\sup_{\Omega\cap B_t}\left(1- \hat u_{r}\right)  \leq \, \sup_{B_{2^{-n}r}}\,(1-\hat v_n)  \le  \exp\left\{-\gamma\sum_{k=0}^{\lfloor \log_2(r/t) \rfloor}\capacity(E_{2^{-k}r},B_{2^{2-k}r})\right\}.
\]

To prove \eqref{eq2043tue}, it is sufficient to show that $\hat v_i \ge \hat v_{i+1}$ in $B_{2^{1-i}r}$, for $i=0,1,2,\ldots$, since $\hat v_0= \hat u_r$.
We note that $v_{i+1}$ is characterized as follows:
\[
v_i(x)=\inf_{w \in \mathscr{F}_i} w(x),\quad x \in B_{2^{2-i}r},
\]
where
\[
\mathscr{F}_i=\left\{ w \in \mathfrak{S}^+(B_{2^{2-i}r}):  w\ge m_i\;\text{ in }\;B_{2^{2-i}r},\;\; w \ge 1\; \text{ in }\;E_{2^{-i}r}\right\}.
\]

It then follows that $\hat v_{i+1}$ satisfies $L \hat v_{i+1} =0$ in $B_{2^{1-i}r}\setminus E_{2^{-i-1}r}$ (see Lemma \ref{lem1013sat}(d)).
Moreover, for $w \in \mathscr{F}_i$, we make the following observations:
\begin{enumerate}[label=(\roman*)]
\item
$\hat v_{i+1}(x) \le 1 \le w(x)$ for all $x \in E_{2^{-i-1}r}$.
\item
For $x \in \partial E_{2^{-i-1}r}$, we have
\[
\limsup_{B_{2^{1-i}r} \setminus E_{2^{-i-1}r}\, \ni y \to x} \hat v_{i+1}(y) \le 1 \le w(x) \le \liminf_{B_{2^{1-i}r} \setminus E_{2^{-i-1}r}\,\ni y \to x} w(y).
\]
\item
Suppose $x \in \partial B_{2^{1-i}r}$.
Since $\hat u_{2^{-i-1}r}$ vanishes on $\partial B_{2^{1-i}r}$, it follows from the identity $\hat v_{i+1}- m_i = (1-m_i)\hat u_{2^{-i-1}r}$ that
\[
\limsup_{B_{2^{1-i}r} \setminus E_{2^{-i-1}r}\, \ni y \to x} \hat v_{i+1}(y) = m_i \le \liminf_{B_{2^{1-i}r} \setminus E_{2^{-i-1}r}\, \ni y \to x} w(y). 
\]
\end{enumerate}

Thus, the comparison principle implies that $\hat v_{i+1} \le w$  in $B_{2^{1-i}r} \setminus E_{2^{-i-1}r}$.
Since $\hat v_{i+1} \le w$ also holds on $E_{2^{-i-1}r}$, we have $\hat v_{i+1} \le w$ throughout $B_{2^{1-i}r}$.
By taking the infimum over $w \in \mathscr F_i$ and applying lower semicontinuous regularization, we obtain
\[
\hat v_{i+1}\le \hat v_i \;\text{ in }\;B_{2^{1-i}r}.
\]
Therefore, \eqref{eq2043tue} is proven, and the proof of the lemma is complete.
\end{proof}

Lemma~\ref{lem2119thu} establishes that $\liminf_{x \rightarrow x_0} \hat u_{r}(x)=1$ for all sufficiently small $r>0$.
Consequently, by Lemma~\ref{lem_charact}, $x_0$ is a regular point.

This concludes the proof for necessity.
The proof is now complete.
\qedhere
\end{proof}

The following theorems are immediate consequences of the Wiener's test.

\begin{theorem}		\label{thm0800sat}
Assume Conditions \ref{cond1} and \ref{cond2}, and let $\Omega$ be a bounded open domain in $\mathbb{R}^2$.
A point $x_0 \in \partial \Omega$ is a regular point for $L$ if and only if $x_0$ is a regular point for the Laplace operator.
\end{theorem}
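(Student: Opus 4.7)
The approach is to derive the theorem as an immediate corollary of the Wiener criterion (Theorem~\ref{thm_wiener}) combined with the capacity comparison (Theorem~\ref{thm1939}), after reducing the case $c \le 0$ to the special case $c \equiv 0$ to which Section~\ref{sec3} and Theorem~\ref{thm_wiener} apply.

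For the reduction I would employ the standard ``divide-by-a-positive-$L$-solution'' trick. First fix $\varphi \in C(\overline{\mathcal B}) \cap W^{2,p_0/2}_{\rm loc}(\mathcal B)$ solving $L\varphi = 0$ in $\mathcal B$ with $\varphi \equiv 1$ on $\partial\mathcal B$. Existence follows from the Perron method once one checks that every point of $\partial\mathcal B$ is regular for $L$, which can be done directly on the ball by an explicit barrier construction, independently of the present theorem. The strong minimum principle (using $c \le 0$) then yields $\varphi > 0$ on $\overline{\mathcal B}$, and the interior $C^1$-regularity under Condition~\ref{cond2}, provided by \cite{DK17, DEK18}, gives $D\varphi \in L^\infty(\overline\Omega)$ since $\Omega \Subset \mathcal B$. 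Setting $u = \varphi v$ and computing,
\[
Lu = \varphi\,\tilde L_0 v, \qquad \tilde L_0 := a^{ij} D_{ij} + \tilde b^i D_i, \qquad \tilde b^i := b^i + 2 \varphi^{-1} a^{ij} D_j \varphi,
\]
so that $\tilde L_0$ satisfies Conditions~\ref{cond1}--\ref{cond2} with zero zero-order coefficient and $\tilde b^i \in L^{p_0}(\Omega)$. Because $\varphi$ is continuous and bounded above and below on $\overline\Omega$, the correspondence $u \leftrightarrow v = u/\varphi$ preserves nonnegativity, the supersolution inequality, and boundary limits; consequently it carries $L$-barriers at $x_0$ bijectively to $\tilde L_0$-barriers at $x_0$, so $x_0$ is regular for $L$ if and only if it is regular for $\tilde L_0$.

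With the reduction in place the conclusion is a one-line chain: Theorem~\ref{thm_wiener} applied to $\tilde L_0$ together with Theorem~\ref{thm1939} gives
\[
x_0\ \text{regular for}\ \tilde L_0 \iff \sum_{k=0}^{\infty} \capacity^{\tilde L_0}\!\left(E_{2^{-k}r_0}, B_{2^{2-k}r_0}\right) = \infty \iff \sum_{k=0}^{\infty} \capacity^{\Delta}\!\left(E_{2^{-k}r_0}, B_{2^{2-k}r_0}\right) = \infty,
\]
and the final condition is the classical Wiener criterion for regularity of $x_0$ with respect to $\Delta$.

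The main obstacle I anticipate is the $L^{p_0}$-integrability of the modified drift $\tilde b^i$, which boils down to boundedness of $D\varphi/\varphi$ on $\overline\Omega$; this relies on the $C^1$-regularity of $\varphi$ from the Dini mean oscillation assumption via \cite{DK17, DEK18} together with strict positivity from the strong minimum principle. A secondary concern is ensuring that the construction of $\varphi$ itself is not circular, i.e.\ that regularity of $\partial\mathcal B$ for $L$ is established without invoking the present theorem, which is handled by a direct barrier construction on the ball.
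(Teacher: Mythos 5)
Your proposal follows essentially the same route as the paper. The paper's proof of Theorem~\ref{thm0800sat} simply refers to \cite[Theorem 6.14]{DKK25}, which proceeds exactly as you describe: reduce $c\le 0$ to $c\equiv 0$ by dividing by a strictly positive $L$-solution (the paper uses the auxiliary function $\zeta$ from \cite[Proposition 3.3]{DKK25}, which is precisely your $\varphi$; compare the identical change-of-unknown computation appearing in the proof of Proposition~\ref{prop_holder} here), and then conclude via the Wiener criterion (Theorem~\ref{thm_wiener}) together with the capacity comparison (Theorem~\ref{thm1939}). One small point of presentation: rather than invoking ``the classical Wiener criterion'' as an external fact in the final step, it is cleaner to observe that $\Delta$ itself satisfies Conditions~\ref{cond1}--\ref{cond2} with $c\equiv 0$, so Theorem~\ref{thm_wiener} applies directly to $\Delta$ and closes the equivalence chain internally. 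Also, to apply Theorem~\ref{thm_wiener} to $\tilde L_0$ you should fix a slightly smaller ball $\mathcal B'$ with $\overline\Omega\subset\mathcal B'\Subset\mathcal B$ so that $\varphi$ is bounded below and $D\varphi$ is bounded (hence $\tilde b^i\in L^{p_0}(\mathcal B')$) on $\mathcal B'$, since the interior $C^1$-estimate from \cite{DK17,DEK18} degenerates at $\partial\mathcal B$.
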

\begin{proof}
Refer to the proof of \cite[Theorem 6.14]{DKK25}.
\end{proof}

A domain $\Omega$ is called a \textit{regular domain} if every point on its boundary $\partial \Omega$ is a regular point with respect to the Laplace operator.

\begin{theorem}		\label{thm0802sat}
Assume that Conditions \ref{cond1} and \ref{cond2} hold.
Let $\Omega$ be a bounded regular domain in $\mathbb{R}^2$.
For $f \in C(\partial\Omega)$, the Dirichlet problem,
\[
Lu=0 \;\text{ in }\;\Omega, \quad u=f \;\text{ on }\;\partial\Omega,
\]
has a unique solution $u \in W^{2, p_0/2}_{\rm loc}(\Omega) \cap C(\overline \Omega)$.
\end{theorem}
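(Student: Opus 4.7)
The plan is to construct the solution via Perron's method and then upgrade interior regularity by identifying the Perron solution with a strong solution on small interior balls. Set $u := H_f$, where $H_f = \overline H_f = \underline H_f$ by the lemma proved at the beginning of Section~\ref{sec3}. Since $\Omega$ is a regular domain, every $x_0 \in \partial\Omega$ is regular for $\Delta$, and Theorem~\ref{thm0800sat} promotes this to regularity for $L$. By the defining property of regular points,
\[
\lim_{\Omega \ni x \to x_0} u(x) = f(x_0), \qquad \forall x_0 \in \partial\Omega,
\]
so $u \in C(\overline\Omega)$ with $u|_{\partial\Omega} = f$.

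For the interior regularity, I would fix an arbitrary ball $B = B_\rho(x_1) \Subset \Omega$ and note that $u|_{\partial B}$ is continuous. Arguing exactly as in the proof of Theorem~\ref{thm2.3} (applying \cite[Theorem~4.2]{Krylov21} together with $L^p$ estimates to absorb the $b^i D_i$ and $c$ contributions), one obtains a strong solution $v \in W^{2,p_0/2}(B) \cap C(\overline B)$ of $Lv = 0$ in $B$ with $v = u$ on $\partial B$. Since every boundary point of $B$ is trivially regular for $\Delta$, Theorem~\ref{thm0800sat} again gives regularity for $L$, so the Perron solution on $B$ with continuous data $u|_{\partial B}$ is unique. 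A standard lifting argument --- replace $u|_B$ by this Perron solution inside $\Omega$, observe that the resulting function is still a Perron-admissible competitor for the original Dirichlet problem on $\Omega$ with boundary data $f$, and invoke uniqueness of $H_f$ on $\Omega$ --- yields $u|_B = v$, hence $u \in W^{2,p_0/2}(B)$. Since $B \Subset \Omega$ was arbitrary, $u \in W^{2, p_0/2}_{\loc}(\Omega)$.

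Uniqueness is a direct consequence of the maximum principle: if $u_1, u_2$ both satisfy the stated conclusion, then $w := u_1 - u_2 \in W^{2, p_0/2}_{\loc}(\Omega) \cap C(\overline\Omega)$ solves $Lw = 0$ in $\Omega$ with $w = 0$ on $\partial\Omega$, and the Aleksandrov--Bakelman--Pucci maximum principle (which applies under Condition~\ref{cond1} in view of $c \le 0$ and $p_0 > 2$) forces $w \equiv 0$.

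The main obstacle I anticipate is the identification $u = v$ on each interior ball: it requires simultaneously invoking the uniqueness of Perron solutions on the regular ball $B$ and the strong $L^{p_0/2}$ solvability from \cite{Krylov21}, glued together by the lifting argument on $\Omega$. All other ingredients --- continuous attainment of boundary values, maximum principle, and the reduction of the whole Dirichlet problem to a Perron construction --- follow the same template as the higher-dimensional case \cite{DKK25}, with the two-dimensional specifics handled by the Green's function and capacity estimates developed in Sections~\ref{sec2} and~\ref{sec3}.
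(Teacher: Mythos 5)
Your overall strategy --- Perron construction, continuous boundary attainment via Theorem~\ref{thm0800sat}, interior regularity by identifying $H_f$ locally with a strong solution on balls, and uniqueness by a maximum principle --- is the route the paper adopts; the paper's proof defers entirely to \cite[Theorem~6.16]{DKK25}, which follows this same template. Two steps in your sketch, however, need repair. The lifting argument as written conflates the strong solution $v$ on $B$ with ``this Perron solution'' before you have shown they agree with $H_f|_B$. The standard way to separate the steps: (i) the Poisson modification over $B$ of the Perron family for the problem on $\Omega$ does not change $H_f$, so $H_f|_B$ is the Perron solution on the regular ball $B$ with continuous data $H_f|_{\partial B}$; (ii) the strong solution $v$ from \cite[Theorem~4.2]{Krylov21} is also a Perron solution on $B$ with that same data, so by uniqueness of Perron solutions on the regular domain $B$ one gets $H_f|_B = v$, and hence the claimed interior $W^{2,p_0/2}$ regularity.

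The more serious gap is the appeal to ABP for uniqueness. In $\bR^d$ the classical Aleksandrov--Bakelman--Pucci maximum principle requires $w \in W^{2,d}_{\loc}$, i.e.\ $W^{2,2}_{\loc}$ here, but the class in the statement is $W^{2,p_0/2}_{\loc}$ and Condition~\ref{cond1} only gives $p_0 > 2$, so $p_0/2$ may be arbitrarily close to $1$; there is no guarantee it lies above the Escauriaza threshold for the refined ABP, and your parenthetical remark that $p_0>2$ and $c\le 0$ suffice is not justified. The cleaner fix --- which the Perron machinery already supplies --- is to observe that any $w \in W^{2,p_0/2}_{\loc}(\Omega)\cap C(\overline\Omega)$ solving $Lw=0$ with $w|_{\partial\Omega}=f$ is simultaneously an admissible upper and lower Perron function for the data $f$, hence $\underline H_f \le w \le \overline H_f$; since the lemma at the start of Section~\ref{sec3} gives $\overline H_f = \underline H_f = H_f$, uniqueness follows without any invocation of ABP.
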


\begin{proof}
Refer to the proof of \cite[Theorem 6.16]{DKK25}.
\end{proof}

We construct the Green's function for $L$ in any regular two-dimensional domain.

\begin{theorem}			\label{thm1127sat}
Under Conditions \ref{cond1} and \ref{cond2}, let $\Omega \subset \bR^2$ be a bounded regular domain contained in $\mathcal{B}$.
Then, there exists a Green's function $G(x,y)$ in $\Omega$, which satisfies the following pointwise bound:
\[
0\le G(x,y) \le C \left\{1+ \log \left(\frac{\diam \Omega}{\abs{x-y}}\right)\right\},\quad x\neq y \in \Omega.
\]
where $C$ is a constant depending only on $d$, $\lambda$, $\Lambda$, $p_0$, $\diam \mathcal{B}$.
\end{theorem}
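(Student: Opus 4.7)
The plan is to construct $G$ as the ball Green's function minus its $L$-harmonic boundary correction. Pick any $y_0\in\Omega$ and choose a ball $B$ with $\overline\Omega\subset B\subset\mathcal B$ whose diameter is comparable to $\diam\Omega$: if $\diam\Omega$ is already comparable to $\diam\mathcal B$, set $B=\mathcal B$; otherwise set $B=B_{2\diam\Omega}(y_0)$, which lies inside $\mathcal B$ once $\diam\Omega$ is small enough relative to $\diam\mathcal B$. Let $G_B(x,y)$ be the Green's function for $L$ on $B$ supplied by Theorem~\ref{thm2.3}, which is nonnegative and satisfies $G_B(x,y)\le C(1+\log(\diam B/\abs{x-y}))$. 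For each $y\in\Omega$, the map $x\mapsto G_B(x,y)$ is continuous on $\partial\Omega\subset\overline B\setminus\{y\}$, and since $\Omega$ is a regular domain Theorem~\ref{thm0802sat} provides a unique $h^y\in W^{2,p_0/2}_{\rm loc}(\Omega)\cap C(\overline\Omega)$ solving
\[
Lh^y=0\;\text{ in }\;\Omega,\qquad h^y=G_B(\,\cdot\,,y)\;\text{ on }\;\partial\Omega.
\]
I then set $G(x,y):=G_B(x,y)-h^y(x)$ for $x\in\Omega\setminus\{y\}$.

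I would next verify the elementary properties of $G$. Since $h^y$ is continuous on $\overline\Omega$ and hence bounded, $G(\cdot,y)$ inherits the logarithmic blow-up of $G_B(\cdot,y)$ at $y$, satisfies $LG(\cdot,y)=0$ in $\Omega\setminus\{y\}$, and vanishes continuously on $\partial\Omega$ by construction. Two applications of the maximum principle (valid since $c\le 0$) give nonnegativity: $h^y\ge 0$ in $\Omega$ because its boundary values $G_B(\cdot,y)$ are nonnegative, and then $G(\cdot,y)$, being an $L$-solution on $\Omega\setminus\overline{B_\epsilon(y)}$ that is nonnegative on $\partial\Omega$ and tends to $+\infty$ at $y$, is nonnegative there as well; letting $\epsilon\to 0$ yields $G\ge 0$ on $\Omega\setminus\{y\}$. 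The pointwise upper bound is then immediate from $h^y\ge 0$:
\[
0\le G(x,y)\le G_B(x,y)\le C\Bigl(1+\log\frac{\diam B}{\abs{x-y}}\Bigr)\le C'\Bigl(1+\log\frac{\diam\Omega}{\abs{x-y}}\Bigr),
\]
where the final step absorbs the bounded ratio $\log(\diam B/\diam\Omega)$ into the constant.

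The step requiring the most care, and in my view the main obstacle, is verifying that $G$ qualifies as the Green's function for $L$ in $\Omega$ in the same sense used in Theorem~\ref{thm2.3}: for every $f\in C^\infty_c(\Omega)$, the adjoint solution $v$ of $L^*v=f$ in $\Omega$ with $v=0$ on $\partial\Omega$ should satisfy $v(y)=-\int_\Omega G(x,y)\,f(x)\,dx$. The argument should mirror the proof of Theorem~\ref{thm2.3}: via the distributional identity $\int_\Omega fw=\int_\Omega v\,Lw$ one tests $v$ against both $h^y$ and the sign-inverted approximate ball Green's function $G_B^\epsilon(\cdot,y)$, then passes to the limit $\epsilon\to 0$ using an analogue of Lemma~\ref{lem1631wed}. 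The obstruction, absent in the ball case of Theorem~\ref{thm2.3}, is that the adjoint theory on a general regular domain---namely unique solvability of $L^*v=f$ in $\Omega$ with zero boundary value together with $v\in C(\overline\Omega)$---is not directly available from the $L^p$-machinery of \cite{Krylov21, DEK18, DK21}, which is developed on smooth balls. I would resolve this by adapting those results to $\Omega$, leveraging the boundary regularity furnished by Theorem~\ref{thm_wiener} and the solvability of the forward problem in Theorem~\ref{thm0802sat}, perhaps via an exhaustion of $\Omega$ by smooth subdomains. Once this adjoint piece is in place, the duality identity translates essentially verbatim from the ball case.
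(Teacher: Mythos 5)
Your construction — take a ball $B$ with $\Omega\subset B\subset\mathcal B$ and $\diam B\simeq\diam\Omega$, set $G(x,y)=G_B(x,y)-h^y(x)$ where $h^y$ is the Perron solution with boundary data $G_B(\cdot,y)|_{\partial\Omega}$, then use nonnegativity of $h^y$ to inherit the logarithmic bound — is exactly the approach the paper takes: the paper chooses $B_r\supset\Omega$ with $r\simeq\diam\Omega$ and then "follows the same proof as in [DKK25, Theorem~7.3], utilizing Theorem~\ref{thm2.3}." The one piece you flag as the main obstacle, verifying the representation formula for adjoint solutions on a merely regular (non-smooth) domain, is precisely what is deferred to [DKK25, Theorem~7.3] in the paper's one-line proof, so your caution there is well placed but does not represent a divergence of method; the proposal is correct and essentially identical.
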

\begin{proof}
We choose $B_r=B_r(x_0) \subset \mathcal{B}$ such that $\Omega \subset B_r$ and $2r=\diam \Omega$.
Following the same proof as in \cite[Theorem 7.3]{DKK25} and utilizing Theorem \ref{thm2.3}, we construct the Green's function in $\Omega$.
This yields the desired upper bound as well.
\end{proof}

For additional comments on the Green's function, see Remark \ref{rmk2001mon}.
The following lemma provides a quantitative estimate for the oscillation of a solution to the Dirichlet problem in terms of the capacity at a point $x_0\in \partial\Omega$.

\begin{lemma}		\label{lem_bdyest}
Assume that Conditions \ref{cond1} and \ref{cond2} hold with $c\equiv 0$.
For $f \in C(\partial \Omega)$, let $u$ be the Perron solution to the Dirichlet problem
\[
Lu=0 \;\text{ in }\;\Omega, \quad u=f \;\text{ on }\;\partial\Omega.
\]
For a point $x_0\in \partial \Omega$, set $B_r=B_r(x_0)$.
Then, for all $t \in (0,r)$, the following estimate holds:
\begin{equation}	\label{bdry_osc}
\osc_{\Omega \cap B_t} u \le \osc_{\partial \Omega \cap B_{4r}} f + 2\left(\sup_{\partial \Omega} \,\abs{f}\right)\exp\left\{-\gamma\sum_{k=0}^{\lfloor \log_2(r/t) \rfloor}\capacity\left(\overline B_{2^{-k}r}\setminus \Omega, B_{2^{2-k}r}\right)\right\}.
\end{equation}
\end{lemma}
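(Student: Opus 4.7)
The plan is to construct upper and lower barriers for $u$ built from the capacitary potential $w := \hat u_{E_r, B_{4r}}$ (with $E_r = \overline B_r(x_0) \setminus \Omega$), and then apply the quantitative decay estimate of Lemma \ref{lem2119thu} to $1-w$. Write $M^+ := \sup_{\partial\Omega \cap B_{4r}} f$, $M^- := \inf_{\partial\Omega \cap B_{4r}} f$, and $K := \sup_{\partial\Omega}\abs{f}$, so that $\osc_{\partial\Omega \cap B_{4r}} f = M^+ - M^-$ and $-K \le M^- \le M^+ \le K$.

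First I would introduce the upper barrier
\[
v_+(x) = \begin{cases} M^+ + (K - M^+)\bigl(1 - w(x)\bigr), & x \in \Omega \cap B_{4r},\\ K, & x \in \Omega \setminus B_{4r}. \end{cases}
\]
Since $w$ vanishes continuously on $\partial B_{4r}$ (Lemma \ref{lem1013sat}(c)), the two pieces of $v_+$ agree on $\Omega \cap \partial B_{4r}$, so $v_+$ is continuous on $\Omega$. On $\partial\Omega \cap B_{4r}$ one has $v_+ \ge M^+ \ge f$ because $K - M^+ \ge 0$ and $1-w \ge 0$; on $\partial\Omega \setminus B_{4r}$ one has $v_+ = K \ge f$. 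The Perron-type comparison used in the sufficiency half of Lemma \ref{lem_charact} then gives $u \le v_+$ in $\Omega$. Analogously, the lower barrier
\[
v_-(x) = \begin{cases} M^- - (M^- + K)\bigl(1 - w(x)\bigr), & x \in \Omega \cap B_{4r},\\ -K, & x \in \Omega \setminus B_{4r}, \end{cases}
\]
satisfies $v_- \le f$ on $\partial\Omega$, yielding $u \ge v_-$ in $\Omega$.

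Subtracting the two pointwise estimates on $\Omega \cap B_t \subset \Omega \cap B_{4r}$ gives
\[
\osc_{\Omega \cap B_t} u \le (M^+ - M^-) + \bigl(2K - (M^+ - M^-)\bigr)\sup_{\Omega \cap B_t}(1-w) \le \osc_{\partial\Omega \cap B_{4r}} f + 2K \sup_{\Omega \cap B_t}(1-w),
\]
so that applying Lemma \ref{lem2119thu} to bound $\sup_{\Omega \cap B_t}(1-w)$ by the exponential factor yields \eqref{bdry_osc} at once.

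The main technical point will be justifying the comparisons $u \le v_+$ and $u \ge v_-$ for the piecewise-defined barriers. Inside $B_{4r}$, $v_\pm$ is a combination of constants and the capacitary potential $w$, which is an $L$-supersolution by Lemma \ref{lem1013sat}(c); outside $B_{4r}$ the barrier is simply the constant $\pm K$, which is $L$-harmonic since $c \equiv 0$. The continuous matching on $\Omega \cap \partial B_{4r}$ together with the boundary inequalities on $\partial\Omega$ is exactly the structure needed to run the Perron-class comparison developed in the sufficiency direction of Lemma \ref{lem_charact}, so no new estimates beyond Lemma \ref{lem2119thu} are required.
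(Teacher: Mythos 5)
Your proof takes essentially the same route as the paper: build barriers for $u$ near $x_0$ out of the capacitary potential $\hat u_{E_r,B_{4r}}$, compare, and feed the result into Lemma \ref{lem2119thu}. The only organizational differences are that the paper writes a single upper barrier for $\underline{H}_f$ and then obtains the lower bound for $\overline{H}_f$ by the symmetry $\underline{H}_f = -\overline{H}_{-f}$, while you write $v_+$ and $v_-$ explicitly (with slightly different but equivalent coefficients), and, more importantly, the paper performs the comparison only in $\Omega\cap B_{4r}$ (using the two pieces of $\partial(\Omega\cap B_{4r})$ separately), whereas you extend $v_\pm$ piecewise to all of $\Omega$.

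That piecewise extension is where the proposal is a bit too quick. You invoke ``the Perron-class comparison developed in the sufficiency direction of Lemma \ref{lem_charact},'' but in that lemma the barrier is built from $\hat u_{E_r,\mathcal B}$, which is a single $L$-supersolution on a ball $\mathcal B$ containing $\overline\Omega$ --- no gluing is needed. Your $v_\pm$ uses $\hat u_{E_r,B_{4r}}$ extended by a constant across $\Omega\cap\partial B_{4r}$, and for the global comparison you would need to check that the resulting kink has the correct sign to keep $v_+$ an $L$-supersolution (equivalently, that $\hat u_{E_r,B_{4r}}$ extended by $0$ is an $L$-subsolution in $\Omega$). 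This does hold, because $\hat u_{E_r,B_{4r}}$ is $L$-harmonic in $\Omega\cap B_{4r}$ (Lemma \ref{lem1013sat}(d)), nonnegative, and vanishes continuously on $\partial B_{4r}$, so the outward normal derivative jump is nonnegative; but this is an extra Hopf-type verification that you assert away. It is cleaner --- and unnecessary to extend at all --- to do exactly what the paper does: since you only need the bound on $\Omega\cap B_t\subset\Omega\cap B_{4r}$, run the comparison in $\Omega\cap B_{4r}$, checking $u\le v_+$ on the two boundary pieces $\partial\Omega\cap B_{4r}$ (using $v_+\ge M^+\ge f$ there) and $\Omega\cap\partial B_{4r}$ (using the global maximum principle for $u$ to get $u\le\sup_{\partial\Omega}f\le K=v_+$ there). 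With that small repair your argument is complete and matches the paper's.
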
 

\begin{proof}
We recall from \cite{DKK25} that lower Perron solution $\underline H_f$ is defined by
\[
\underline H_f(x)=\sup_{w \in \mathfrak{L}_f} w(x),
\]
where
\[
\mathfrak{L}_f=\left\{w : w \text{ is an $L$-subsolution in $\Omega$,}\; \;\limsup_{y\to x, \,y \in \Omega}\, w(y) \le f(x),\;\forall x\in \partial\Omega\right\}.
\]

Define the function $v$ by
\[
v:= \left(\sup_{\partial \Omega}f - \sup_{\partial \Omega \cap B_{4r}} f\right) (1-\hat u_{E_{r}, B_{4r}}) + \sup_{\partial \Omega \cap B_{4r}}f.
\]
Clearly, $v$ is an $L$-subsolution in $B_{4r}$.

For any $w \in \mathfrak{L}_f$, we verify that $w \leq v$ on $\partial(\Omega \cap B_{4r})$ as follows:
\begin{enumerate}[label=(\roman*)]
\item
Boundary on $\partial \Omega \cap B_{4r}$:
Since $w \in \mathfrak{L}_f$, we have
\[
\limsup_{\Omega\cap B_{4r} \ni y \to x}w(y) \,\leq\, \sup_{\partial \Omega \cap B_{4r}}f \,\leq\, v(x)\quad\text{for }\;x \in \partial \Omega \cap B_{4r}.
\]
\item
Boundary on $\Omega \cap \partial B_{4r}$:
By the maximum principle, it follows that
\[
\limsup_{\Omega\cap B_{4r} \ni y \to x}w(y) \,\leq\, \sup_{\overline \Omega }w \leq \sup_{\partial \Omega }f = v(x)\quad\text{for }\;x \in \Omega \cap \partial B_{4r}.
\]
\end{enumerate}

By the comparison principle, we conclude that $w \leq v$ in $\Omega \cap B_{4r}$.
Taking the supremum over $\mathfrak{L}_f$, we obtain
\[
\underline H_f \le v \quad\text{ in }\;\Omega \cap B_{4r}.
\]
Taking the supremum over $\Omega \cap B_t$ in this inequality and applying Lemma \ref{lem2119thu}, we get 
\begin{equation}	\label{bdry_upper}
\sup_{\Omega \cap B_t} \underline{H}_f \leq \sup_{\partial \Omega \cap B_{4r}} f + \left(\sup_{\partial \Omega}f -\sup_{\partial \Omega \cap B_{4r}}f\right)\exp\left\{-\gamma\sum_{k=0}^{\lfloor \log_2(r/t) \rfloor}\capacity(\overline B_{2^{-k}r}\setminus \Omega, B_{2^{2-k}r})\right\}.
\end{equation}
Using $\underline H_{f}=-\overline H_{-f}$, we also obtain
\begin{equation}	\label{bdry_lower}
\inf_{\Omega \cap B_t} \overline{H}_f \geq \inf_{\partial \Omega \cap B_{4r}} f + \left(\inf_{\partial \Omega}f  - \inf_{\partial \Omega \cap B_{4r}}f \right) \exp\left\{-\gamma\sum_{k=0}^{\lfloor \log_2(r/t) \rfloor}\capacity(\overline B_{2^{-k}r}\setminus \Omega, B_{2^{2-k}r})\right\}.
\end{equation}
Finally, combining \eqref{bdry_upper} and \eqref{bdry_lower} yields the desired bound \eqref{bdry_osc}.
\end{proof}

The following  proposition establishes H\"older continuity estimates for solutions to the Dirichlet problem at a boundary point $x_0$ when the boundary data is H\"older continuous at $x_0$ and a certain capacity condition is satisfied. It also provides global H\"older continuity estimates under a stronger assumption on the capacity.
The proof uses an argument originally due to Maz'ya \cite{Mazya1963}.

\begin{proposition}		\label{prop_holder}
Assume that Conditions \ref{cond1} and \ref{cond2} hold.
Let $f \in C(\partial \Omega)$, let $u$ be the Perron solution to the Dirichlet problem
\[
Lu=0 \;\text{ in }\;\Omega, \quad u=f \;\text{ on }\;\partial\Omega.
\]
If $x_0 \in \partial\Omega$ satisfies the capacity condition
\begin{equation}	\label{eq_rc1}
\liminf_{t \searrow 0}\, \capacity \left(\overline B_t(x_0) \setminus \Omega,\, B_{4t}(x_0)\right)>0,
\end{equation}
and if $f$ is H\"older continuous at $x_0$, then $u$ is also H\"older continuous at $x_0$ (possibly with a different H\"older exponent).
Furthermore, if there exist constants $\varepsilon_0$, $r_0>0$ such that
\begin{equation}	\label{eq_rc2}
\capacity \left(\overline B_t(x_0) \setminus \Omega, B_{4t}(x_0)\right) \ge \varepsilon_0,\quad \forall t \in (0, r_0),
\end{equation}
for all $x_0 \in \partial\Omega$, and if $f \in C^{\beta}(\partial\Omega)$ for some $\beta \in (0,1)$, then 
$u \in C^\alpha(\overline\Omega)$ for some $\alpha \in (0,\beta)$.
\end{proposition}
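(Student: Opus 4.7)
The plan is to derive both parts directly from the oscillation estimate in Lemma~\ref{lem_bdyest}, reducing first to the case $c\equiv 0$ (to which the lemma applies); the extension to general $c\le 0$ proceeds by the same reduction used in Theorem~\ref{thm0800sat} (\cite[Theorem~6.14]{DKK25}), so I will focus on the $c\equiv 0$ case.

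For part (i), the hypothesis \eqref{eq_rc1} yields constants $\varepsilon_0>0$ and $r_1>0$ such that $\capacity(\overline B_t(x_0)\setminus\Omega, B_{4t}(x_0))\ge \varepsilon_0$ for all $t\in(0,r_1)$. Thus for any $0<t<r\le r_1$ the sum in Lemma~\ref{lem_bdyest} satisfies
\[
\sum_{k=0}^{\lfloor \log_2(r/t)\rfloor}\capacity\bigl(\overline B_{2^{-k}r}\setminus\Omega,\,B_{2^{2-k}r}\bigr)\ge \varepsilon_0\log_2(r/t),
\]
so the exponential factor is bounded by $(t/r)^{\alpha_0}$ with $\alpha_0:=\gamma\varepsilon_0/\log 2$. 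Using the H\"older continuity of $f$ at $x_0$, $\osc_{\partial\Omega\cap B_{4r}} f\le C r^{\beta}$, Lemma~\ref{lem_bdyest} gives
\[
\osc_{\Omega\cap B_t(x_0)} u \le C r^{\beta}+C(t/r)^{\alpha_0}.
\]
I will then choose $r=t^{\alpha_0/(\alpha_0+\beta)}$ to balance the two terms, which produces
\[
\osc_{\Omega\cap B_t(x_0)} u \le C t^{\alpha},\qquad \alpha:=\frac{\alpha_0\beta}{\alpha_0+\beta}\in(0,\beta).
\]
Since the Perron solution $u$ is continuous at $x_0$ with $u(x_0)=f(x_0)$, this translates to $|u(x)-u(x_0)|\le C|x-x_0|^{\alpha}$ for $x\in\overline\Omega$ near $x_0$.

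For part (ii), the uniform capacity bound \eqref{eq_rc2} and $f\in C^\beta(\partial\Omega)$ let me apply the argument of part (i) at every boundary point with constants independent of $x_0$, yielding
\[
\osc_{\Omega\cap B_t(x_0)} u \le C t^{\alpha},\qquad \forall x_0\in\partial\Omega,\; \forall t\in(0,t_0),
\]
for some fixed $\alpha\in(0,\beta)$. To pass from this uniform boundary estimate to global $C^\alpha(\overline\Omega)$ regularity, I will carry out the standard gluing with the interior H\"older estimate for $L$-solutions (Krylov--Safonov, applicable under Condition~\ref{cond1}). Given $x,y\in\overline\Omega$ with, say, $d(x):=\dist(x,\partial\Omega)\le d(y)$, the two regimes to handle are $|x-y|\ge d(x)/2$ (reduce to the boundary H\"older at the nearest $x_0\in\partial\Omega$, noting $x,y\in B_{3|x-y|}(x_0)$) and $|x-y|<d(x)/2$ (apply interior H\"older on $B_{d(x)/2}(x)$ using that $\osc_{B_{d(x)/2}(x)} u\le C d(x)^{\alpha}$ by the boundary estimate, then scale to get $\osc_{B_{|x-y|}(x)} u\le C|x-y|^{\alpha'} d(x)^{\alpha-\alpha'}$, and take $\alpha':=\min(\alpha,\alpha_{KS})$ so that the $d(x)$-factor is nonnegative).

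The main obstacle is this interior-to-boundary patching: one must choose the final H\"older exponent as the minimum of the boundary exponent produced above and the Krylov--Safonov exponent, and verify that the two estimates combine cleanly across all length-scale regimes with a constant independent of $x,y$. The reduction from $c\le 0$ to $c\equiv 0$ is a secondary technical point handled by subtracting the Newtonian potential of $cu$ against $G$ and invoking the already-established solvability from Theorem~\ref{thm0802sat}, exactly as in \cite{DKK25}.
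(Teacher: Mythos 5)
Your proposal follows the same overall strategy as the paper's proof: invoke Lemma~\ref{lem_bdyest}, use the capacity lower bound to make the exponential factor $\lesssim (t/r)^{\mu}$ with $\mu=\gamma\varepsilon_0/\log 2$, balance exponents with $r=t^{\mu/(\beta+\mu)}$ to get the pointwise H\"older estimate, and then glue with an interior estimate via the standard two-regime split on $|x-y|$ versus $d(x)$. Two minor deviations from the paper are worth flagging. First, for the interior ingredient you invoke Krylov--Safonov, whereas the paper uses the Sobolev embedding for the already-available $u\in W^{2,p_0/2}_{\rm loc}(\Omega)$ regularity to get $[u]_{C^{2-4/p_0}(B_r)} \lesssim r^{-(2-4/p_0)}\norm{u}_{L^\infty(B_{2r})}$, and takes $\alpha=\min(\alpha_0,\,2-4/p_0)$; both routes produce a uniformly scaled interior H\"older estimate, though the paper's is more immediate given the hypotheses on $b,c$. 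Second, for the $c\le 0$ reduction you propose subtracting the Newtonian potential of $cu$ against the Green's function; this would require a separate verification that the potential has H\"older regularity near $\partial\Omega$, and it is not what is done in \cite{DKK25}. The paper instead passes to $v=\zeta u$ with $\zeta$ from \cite[Proposition~3.3]{DKK25}, which transforms the equation to one with zero $c$ and modified drift at the level of the PDE itself, avoiding any new boundary-regularity check. Your core argument for the $c\equiv 0$ case is correct; the $c\not\equiv 0$ reduction as described is underspecified and should be replaced by the $\zeta$-transformation (or its Newtonian-potential alternative would need the extra regularity step supplied).
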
 

\begin{proof}
The assumption \eqref{eq_rc1} implies that there exist constants $\varepsilon_0$, $r_0>0$ such that \eqref{eq_rc2} holds.
By Lemma \ref{lem_bdyest} and \eqref{eq_rc2}, we obtain
\begin{align*}
\osc_{\Omega \cap B_t(x_0)} u &\le \osc_{\partial \Omega \cap B_{4r}(x_0)} f + 2\left(\sup_{\partial \Omega} \,\abs{f}\right)\exp\left\{-\gamma \sum_{k=0}^{\lfloor \log_2(r/t) \rfloor} \capacity\left(\overline B_{2^{-k}r}(x_0) \setminus \Omega, B_{2^{2-k}r}(x_0)\right)\right\}\\
& \le \osc_{\Omega \cap B_{4r}(x_0)} f + C\sup_{\partial \Omega}\, \abs{f}\, \left(\frac{t}{r}\right)^\mu,\qquad 0<t<r<r_0,
\end{align*}
where $\mu=\gamma \varepsilon_0/\log 2>0$.

Suppose $f$ is $\beta$-H\"older continuous at $x_0$.
By taking $r = t^{\mu/(\beta+\mu)}$, we conclude that $u$ is H\"older continuous at $x_0$ with exponent $\alpha_0=\frac{\beta\mu}{\beta+\mu}$.

If \eqref{eq_rc2} holds uniformly for all $x_0 \in \partial\Omega$, then the preceding argument yields
\begin{equation}		\label{eq0806sat}
\osc_{\Omega \cap B_t(x_0)} u \le C \norm{f}_{C^\beta(\partial\Omega)} \, t^{\alpha_0}, \quad \forall x_0\in \partial\Omega.
\end{equation}
Initially, \eqref{eq0806sat} holds for $0<t<r_0$;  however, by allowing $C$ to depend on $r_0$, we extend its validity to all $0<r<\diam \Omega$.
Furthermore, since $u \in W^{2,p_0/2}_{\rm loc}(\Omega)$, it follows that $u$ is locally H\"older continuous in $\Omega$ with exponent $2-4/p_0$.
Define
\[
\alpha=\min(\alpha_0, 2-4/p_0).
\]
For $B_{2r}\subset \Omega$, applying standard $L^p$ estimates,  the Sobolev embedding theorem, interpolation inequalities, and the maximum principle, we obtain
\begin{equation}		\label{eq1440sat}
r^{\alpha} [u]_{C^\alpha(B_r)} \le r^{2-4/p_0} [u]_{C^{2-4/p_0}(B_r)} \le C \norm{u}_{L^\infty(B_{2r})} \le C \norm{f}_{C^\beta(\partial\Omega)}.
\end{equation}

The global H\"older estimates follow by combining \eqref{eq0806sat} and \eqref{eq1440sat} as follows.
Let $x$, $y \in \Omega$ and denote $d(x)= \dist(x,\partial\Omega)$.
Let $x_0 \in \partial \Omega$ be such that $d(x)=\abs{x-x_0}$.
Temporarily assuming $c \equiv 0$, we may, without loss of generality, set $u(x_0)=0$ by replacing $u$ with $u-u(x_0)$ and $f$ with $f-f(x_0)$.
Note that
\[
\norm{f-f(x_0)}_{C^\beta(\partial\Omega)} \le 2 \norm{f}_{C^\beta(\partial\Omega)}.
\]

We consider two cases:

\medskip
\noindent
\textbf{Case 1:} $d(x)>2 \abs{x-y}$.
\smallskip

Applying the interior estimate \eqref{eq1440sat} in $B_r(x)$ with $r=\frac{1}{2}d(x)$ gives
\begin{equation}			\label{eq1011sun}
\frac{\abs{u(x)-u(y)}}{\abs{x-y}^{\alpha}} \le C r^{-\alpha} \norm{u}_{L^\infty(B_{2r}(x))} .
\end{equation}
By \eqref{eq0806sat} and recalling that $\alpha\le \alpha_0$, we obtain
\[
r^{-\alpha} \norm{u}_{L^\infty(B_{2r}(x))} \le r^{-\alpha} \osc_{B_{4r}(x_0)} u \le C  \norm{f}_{C^\beta(\partial\Omega)}.
\]
Combining this with \eqref{eq1011sun}, we obtain
\[
\frac{\abs{u(x)-u(y)}}{\abs{x-y}^{\alpha}} \le C \norm{f}_{C^\beta(\partial\Omega)}.
\]

\medskip
\noindent
\textbf{Case 2:} $d(x)\le 2 \abs{x-y}$.
\smallskip

Setting $r=\abs{x-y}$, note that $x$, $y \in B_{3r}(x_0)$.
By \eqref{eq0806sat}, we obtain
\[
\frac{\abs{u(x)-u(y)}}{\abs{x-y}^{\alpha}} \le r^{-\alpha} \osc_{\Omega \cap B_{3r}(x_0)} u \le  C \norm{f}_{C^\beta(\partial\Omega)}.
\]

\smallskip
Therefore, considering both cases and talking the limits on $\partial\Omega$, we obtain 
\[
[u]_{C^\alpha(\overline \Omega)} \le C \norm{f}_{C^\beta(\partial\Omega)}.
\]

Now, we consider the general case when $c \not\equiv 0$.
Let $v=\zeta u$, where $\zeta$ is as defined in \cite[Proposition 3.3]{DKK25}.
Note that $v$ satisfies (see \cite[(3.10) -- (3.11)]{DKK25})
\[
a^{ij}D_{ij}v+(b^i+2a^{ij}D_j \zeta/\zeta)D_i v=0\;\text{ in }\;\Omega,\quad v=f/\zeta \;\text{ on }\;\partial\Omega.
\]
Using the preceding argument and the properties of $\zeta$, we conclude that
\[
u =v/\zeta \in C^\alpha(\overline\Omega).\qedhere
\] 
\end{proof}

We present examples that satisfy the hypotheses of Theorem \ref{prop_holder}. The following lemma will be useful for this purpose.
\begin{lemma}			\label{lem1329wed}
Let $B_{8r}(x_0) \subset \mathcal{B}$, and let $K$ be a compact subset of $\overline{B_r}(x_0)$. Then,
\[
\frac{1}{C} \capacity^\Delta (K,B_{4r}(x_0)) \le \capacity^\Delta(K,B_{8r}(x_0)) \le C \capacity^\Delta(K,B_{4r}(x_0)),
\]
where $C>0$ is a constant independent of $r$.
\end{lemma}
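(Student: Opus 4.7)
The plan is to reduce the problem to comparing the Laplacian capacitary potentials at two reference points supplied by Lemma \ref{lem0956fri}, and then to prove the two inequalities asymmetrically---one by monotonicity plus a Harnack chain, the other by an admissible test-measure argument via Lemma \ref{lem2020sat}. Write $B_s = B_s(x_0)$, and let $\mu^{4r}, \mu^{8r}$ denote the Laplacian capacitary measures of $K$ relative to $B_{4r}, B_{8r}$, with corresponding Green's functions $G^\Delta_{4r}, G^\Delta_{8r}$ and capacitary potentials $\hat u^\Delta_{K, B_{4r}}, \hat u^\Delta_{K, B_{8r}}$. Applying Lemma \ref{lem0956fri} first to $B_{4r}$ and then to $B_{8r}$ with $r$ replaced by $2r$ (legitimate since $K \subset \overline{B_r} \subset \overline{B_{2r}}$) produces
\[
\capacity^\Delta(K, B_{4r}) \simeq \hat u^\Delta_{K, B_{4r}}(x^\circ), \quad \capacity^\Delta(K, B_{8r}) \simeq \hat u^\Delta_{K, B_{8r}}(y^\circ),
\]
for fixed $x^\circ \in \partial B_{3r/2}$ and $y^\circ \in \partial B_{3r}$; it thus suffices to compare these two values.

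For $\capacity^\Delta(K, B_{4r}) \leq C\,\capacity^\Delta(K, B_{8r})$, Lemma \ref{lem1013sat}\,(g) immediately yields $\hat u^\Delta_{K, B_{4r}} \leq \hat u^\Delta_{K, B_{8r}}$ on $B_{4r}$. Since $K \subset \overline{B_r}$, the function $\hat u^\Delta_{K, B_{8r}}$ is nonnegative and harmonic on the annulus $B_{4r}\setminus \overline{B_r}$, which contains both $x^\circ$ and $y^\circ$, so a standard Harnack chain in this annulus gives $\hat u^\Delta_{K, B_{8r}}(x^\circ) \simeq \hat u^\Delta_{K, B_{8r}}(y^\circ)$; combined with the displayed equivalences, this closes the direction.

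For the reverse inequality $\capacity^\Delta(K, B_{8r}) \leq C\,\capacity^\Delta(K, B_{4r})$, I would take $\mu^{8r}$ itself as a test measure for $\hat u^\Delta_{K, B_{4r}}$. The maximum principle for Dirichlet Green's functions on nested balls gives $G^\Delta_{4r}(x,y) \leq G^\Delta_{8r}(x,y)$ for $x, y \in B_{4r}$, hence
\[
\int_K G^\Delta_{4r}(x, y)\,d\mu^{8r}(y) \leq \int_K G^\Delta_{8r}(x, y)\,d\mu^{8r}(y) = \hat u^\Delta_{K, B_{8r}}(x) \leq 1, \quad x \in B_{4r},
\]
so $\mu^{8r} \in \mathfrak{M}_{K, B_{4r}}$. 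Lemma \ref{lem2020sat} then yields
\[
\hat u^\Delta_{K, B_{4r}}(x^\circ) \geq \int_K G^\Delta_{4r}(x^\circ, y)\,d\mu^{8r}(y) \geq c_0\,\mu^{8r}(K) = c_0\,\capacity^\Delta(K, B_{8r}),
\]
where the uniform lower bound $G^\Delta_{4r}(x^\circ, y) \geq c_0 > 0$ for $y \in K \subset \overline{B_r}$ is precisely the Green's function estimate at the reference point $x^\circ \in \partial B_{3r/2}$ that already powers the proof of Lemma \ref{lem0956fri} through Theorem \ref{thm_green_function}. The main technical point is the \emph{global} admissibility of $\mu^{8r}$ on $B_{4r}$ (not merely on $K$), which is exactly what the pointwise comparison $G^\Delta_{4r} \leq G^\Delta_{8r}$ combined with the bound $\hat u^\Delta_{K, B_{8r}} \leq 1$ secures.
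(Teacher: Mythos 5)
Your proof is correct, and your overall strategy (reduce via Lemma \ref{lem0956fri} to a pointwise comparison of relative capacitary potentials, dispatch the easy direction by the monotonicity in Lemma \ref{lem1013sat}(g)) matches the paper's. The point of genuine divergence is the harder direction $\capacity^\Delta(K,B_{8r})\lesssim\capacity^\Delta(K,B_{4r})$. You run a \emph{test-measure} argument: note $G^\Delta_{4r}\le G^\Delta_{8r}$ on $B_{4r}$, conclude $\mu^{8r}\in\mathfrak M_{K,B_{4r}}$, and then bound $\hat u^\Delta_{K,B_{4r}}(x^\circ)$ from below via Lemma \ref{lem2020sat} and the uniform positive lower bound for $G^\Delta_{4r}(x^\circ,\cdot)$ on $\overline{B_r}$. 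The paper instead runs a \emph{barrier} argument: setting $M=\sup_{\partial B_{4r}}\hat u_{K,B_{8r}}$, it compares on $\partial B_{4r}$ and on $K$ to get $\hat u_{K,B_{8r}}\le(1-M)\hat u_{K,B_{4r}}+M$ in $B_{4r}\setminus K$, then controls $M$ by comparing with the explicit harmonic function $v(x)=\tfrac{1}{\log 4}\log(8r/\abs{x-x_0})$, which pins $M\le\tfrac12\sup_{\partial B_{2r}}\hat u_{K,B_{8r}}$ and closes the inequality with constant exactly $2$. Your argument leans on the dual (energy-measure) characterization and the Green's function bound, whereas the paper's leans entirely on the maximum principle and the explicit radial harmonic barrier, thereby producing a clean numerical constant and avoiding Lemma \ref{lem2020sat} entirely. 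Both are valid; the paper's has the advantage of self-containment within the maximum-principle toolkit, while yours generalizes more transparently to settings where explicit barriers are unavailable. Two small caveats worth tracking: (i) the lower bound $G^\Delta_{4r}(x^\circ,y)\ge c_0$ for $y\in\overline{B_r}$ is used here with $x^\circ\in\partial B_{3r/2}$, so strictly speaking you need the extension of Theorem \ref{thm_green_function} to $\overline{B_{3r/2}}$ (Remark \ref{rmk0807fri}) rather than the theorem itself---or, since you are working with the Laplacian, an explicit scaling computation; Lemma \ref{lem0956fri} has the same issue, so this is a gloss the paper itself accepts. (ii) Your Harnack chain relating $\hat u^\Delta_{K,B_{8r}}(x^\circ)$ and $\hat u^\Delta_{K,B_{8r}}(y^\circ)$ should be taken inside an annulus compactly contained in $B_{8r}\setminus\overline{B_r}$ (for example $\{5r/4<\abs{z-x_0}<7r/2\}$) so that Harnack applies; this is routine but worth stating.
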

\begin{proof}
Denote $B_r=B_r(x_0)$.
By Theorems~\ref{thm_green_function} and \ref{thm1929}, it suffices to show the following (cf. Lemma \ref{lem0956fri}):
\begin{equation}	\label{claim250319}
\sup_{\partial B_{2r}} \hat u_{K,B_{4r}}  \le \sup_{\partial B_{2r}} \hat u_{K,B_{8r}} \le 2\sup_{\partial B_{2r}} \hat u_{K,B_{4r}}.
\end{equation}
The first inequality in \eqref{claim250319} follows directly from Definition \ref{def_rfn}.
Define
\[
M := \sup_{\partial B_{4r}}\hat{u}_{K,B_{8r}}.
\]
By the comparison principle, we obtain (by comparing on $\partial B_{4r}$ and $\partial K$)
\begin{equation}	\label{eq1204tue}
\hat u_{K,B_{8r}} \leq (1-M)\hat u_{K,B_{4r}} + M\quad \mbox{in }B_{4r}\setminus K.
\end{equation}
Define
\[
v(x) := \frac{1}{\log 4}\, \log\left(\frac{8r}{\abs{x-x_0}}\right).
\]
Note that $\Delta v=0$ in $B_{8r}\setminus B_{2r}$, with $v =0$ on $\partial B_{8r}$ and $v =1$ on $\partial B_{2r}$.
Applying the comparison principle, we obtain
\[
\hat u_{K,B_{8r}} \le \left( \sup_{\partial B_{2r}}\hat{u}_{K,B_{8r}}\right) v(x) \quad\mbox{on }\;  B_{8r}\setminus \overline{B_{2r}}.
\]
In particular, by taking supremum over $\partial B_{4r}$ in the above, we obtain
\begin{equation}		\label{eq1205tue}
M \le \frac{1}{2} \sup_{\partial B_{2r}} \hat{u}_{K,B_{8r}}.
\end{equation}
Taking supremum over $\partial B_{2r}$ in \eqref{eq1204tue} and using \eqref{eq1205tue}, we deduce
\[
\sup_{\partial B_{2r}} \hat{u}_{K,B_{8r}} \le \sup_{\partial B_{2r}} \hat u_{K,B_{4r}}+\frac{1}{2}\sup_{\partial B_{2r}}\hat{u}_{K,B_{8r}},
\]
which proves the second inequality of \eqref{claim250319}.
\end{proof}

\begin{definition}		
A subset $I \subset \bR^2$ is called a line segment if it can be expressed as
\[
I=\{x: x=x_0+t \vec e, \, 0\le t \le \ell\}
\]
for some point $x_0 \in \bR^2$, a unit vector $\vec e \in \bR^2$ with $\abs{\vec e}=1$, and a positive scalar $\ell>0$.
We say that the exterior line segment condition holds at $x_0 \in \partial\Omega$ if there exists a line segment $I \in \bR^2\setminus \Omega$ that starts at $x_0$, i.e., there exists
\[
I=\{x: x=x_0+t \vec e, \, 0\le t \le \ell\}\quad \text{with}\quad I \subset \bR^2\setminus \Omega.
\]
Furthermore, we say that $\Omega$ satisfies the uniform exterior line segment condition if there exists a uniform constant $r_0>0$ such that the exterior line segment condition holds at every boundary point $x_0\in\partial\Omega$ with a line segment of length $\ell \ge r_0$.
\end{definition}

\begin{proposition}		\label{prop0115}
Suppose the exterior line segment condition holds at $x_0 \in \partial \Omega$, then \eqref{eq_rc1} holds.
If $\Omega$ satisfies the uniform exterior line segment condition, the \eqref{eq_rc2} holds.
\end{proposition}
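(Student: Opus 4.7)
The plan is to reduce both assertions to a scale-invariant lower bound for the Laplacian capacity of a line segment. First, by Theorem \ref{thm1939}, it suffices to produce the desired lower bounds for the capacity $\capacity^\Delta$ associated with the Laplace operator. Let $I$ be the exterior segment at $x_0$, of length $\ell > 0$. For $0 < t \le \ell$, the truncated segment $I_t := I \cap \overline{B_t(x_0)}$ is a closed line segment of length $t$ with one endpoint at $x_0$, contained in $\overline{B_t(x_0)} \setminus \Omega$, so by the Laplacian analogue of Lemma \ref{lem1013sat}(f),
\[
\capacity^\Delta(\overline{B_t(x_0)} \setminus \Omega, B_{4t}(x_0)) \ge \capacity^\Delta(I_t, B_{4t}(x_0)).
\]

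Next I would exploit the dilation invariance specific to two dimensions. The Green's function $G^\Delta_{B_R(z)}(x,y)$ of $-\Delta$ on a disk satisfies
\[
G^\Delta_{B_{\lambda R}(\lambda z)}(\lambda x, \lambda y) = G^\Delta_{B_R(z)}(x, y),
\]
which follows from $\Delta(u\circ M_\lambda) = \lambda^2(\Delta u)\circ M_\lambda$ together with $\delta_0(\lambda \cdot) = \lambda^{-2}\delta_0$ in $\mathbb{R}^2$ (the two $\lambda^{\pm 2}$ factors cancel). The capacity, being defined via this Green's function through Theorem \ref{thm1929}, is therefore invariant under the composition of translations, rotations, and dilations. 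Applying such a transformation, one gets
\[
\capacity^\Delta(I_t, B_{4t}(x_0)) = \capacity^\Delta(I_1^o, B_4(0)) =: c_0,
\]
where $I_1^o := \{(s,0): 0 \le s \le 1\}$ is the canonical unit segment. Combining the above steps with Theorem \ref{thm1939} would yield
\[
\capacity(\overline{B_t(x_0)} \setminus \Omega, B_{4t}(x_0)) \ge c_0/C, \qquad 0 < t \le \ell,
\]
which proves \eqref{eq_rc1}. Under the uniform exterior line segment condition, $\ell \ge r_0$ can be chosen independently of the boundary point, so the same estimate holds for every $x_0 \in \partial\Omega$ and every $t \in (0, r_0]$ with $\varepsilon_0 := c_0/C$, giving \eqref{eq_rc2}.

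The main obstacle is the positivity $c_0 > 0$; everything else is bookkeeping with the scaling identity. This positivity is a classical fact in two-dimensional potential theory (the unit segment is a compact continuum of logarithmic capacity $1/4$, hence is non-polar, and so its condenser capacity with respect to any strictly larger disk is strictly positive). In a self-contained write-up one could either cite this or give a direct proof by exhibiting a nonzero admissible measure on $I_1^o$ whose $G^\Delta_{B_4(0)}$-potential is bounded above by $1$—for instance the pushforward of arc length on $\partial B_1(0)$ under the Joukowski parametrization of $I_1^o$, for which the potential admits an explicit expression and hence an elementary uniform bound.
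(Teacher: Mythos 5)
Your proof is correct, but it takes a genuinely different route from the paper's. You exploit the exact dilation invariance of the two-dimensional Laplacian condenser capacity---a feature specific to $d=2$, where $G^{\Delta}_{B_{\lambda R}(\lambda z)}(\lambda x,\lambda y)=G^{\Delta}_{B_R(z)}(x,y)$---to collapse all scales to a single constant $c_0=\capacity^{\Delta}(I_1^o,B_4(0))$, and then invoke the classical fact that a line segment is non-polar (equivalently, has positive logarithmic capacity), so $c_0>0$. The paper avoids appealing to that classical fact and instead makes a geometric detour: with the segment $I$ as one side of an equilateral triangle $T\subset B_r$, it uses monotonicity, subadditivity, and $\capacity^{\Delta}(T,B_{4r})=\capacity^{\Delta}(\partial T,B_{4r})$ (properties cited from Gr\"uter--Widman) to get
\[
\capacity^{\Delta}(D,B_{4r})\le\capacity^{\Delta}(T,B_{4r})\le\sum_{i=1}^3\capacity^{\Delta}(I_i,B_{4r}),
\]
where $D$ is the inscribed disk of radius comparable to $r$, so that Proposition \ref{capa_B_r} gives $\capacity^{\Delta}(D,B_{4r})\gtrsim 1$; it then equalizes the three sides $I_i$ with $I$ via translation/rotation invariance together with Lemma \ref{lem1329wed} and \eqref{eq1031mon}. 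The paper's route is slightly more self-contained, since the positive lower bound is traced back to its own Proposition \ref{capa_B_r} rather than to the non-polarity of $[0,1]$; your route is structurally cleaner and makes transparent exactly which 2D scaling identity is doing the work. One small point: your appeal to ``the Laplacian analogue of Lemma \ref{lem1013sat}(f)'' is a statement about potentials, not capacities; to pass from $\hat u_{I_t,B_{4t}}\le\hat u_{\overline{B_t}\setminus\Omega,B_{4t}}$ to the corresponding capacity inequality you should invoke Lemma \ref{lem0956fri} (up to a constant) or cite monotonicity of $\capacity^{\Delta}$ directly from \cite{GW82}, as the paper does. Both routes are sound.
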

\begin{proof}
For compact subsets $E_i$ and $K$ of $B$, the following properties hold (see \cite{GW82}):
\begin{enumerate}
\item $\capacity^\Delta(E_1,B) \le \capacity^\Delta(E_2,B)$ if $E_1 \subset E_2$.
\item $\capacity^\Delta(\bigcup_{i=1}^N E_i,B) \le \sum_{i=1}^N \capacity(E_i,B)$.
\item $\capacity^\Delta(K,B) = \capacity^\Delta(\partial K,B)$.
\end{enumerate}

Let $I$ be a line segment of length $r$ contained in $\overline{B_r}(x_0) \subset \bR^2$, and let $I_1$, $I_2$, $I_3$ be line segments congruent to $I$ that form the sides of an equilateral triangle $T$ inside $B_r=B_r(x_0)$.
Let $D$ be the closed disk inscribed in $T$.
By Lemma \ref{capa_B_r}, we obtain
\begin{equation}			\label{eq1745wed}
c \le \capacity^\Delta(D,B_{4r}) \le \capacity^\Delta(T,B_{4r}) = \capacity^\Delta(\partial T,B_{4r}) \le \sum_{i=1}^3\capacity^\Delta(I_i,B_{4r}),
\end{equation}
where $c>0$ is a constant independent of $r$.

Next, for any $y_0 \in B_r(x_0)$, observe that
\[
B_{4r}(x_0) \subset B_{6r}(y_0) \subset B_{8r}(x_0).
\]
Thus, applying Lemma \ref{lem1329wed} along with Lemmas \ref{lem1013sat} and \ref{lem0956fri}, we obtain that for any compact set $K \subset \overline{B_r}(x_0)$,
\[
\capacity^\Delta(K,B_{4r}(x_0))  \lesssim  \capacity^\Delta(K,B_{6r}(y_0)) \lesssim \capacity^\Delta(K,B_{8r}(x_0)) \lesssim \capacity^\Delta(K,B_{4r}(x_0)).
\]
This, together with Lemma \ref{lem1329wed}, implies that
\begin{equation}			\label{eq1031mon}
\capacity^\Delta(K,B_{4r}(x_0)) \simeq \capacity^\Delta(K,B_{6r}(y_0)) \simeq \capacity^\Delta(K,B_{4r}(y_0)).
\end{equation}
By the translation and rotation invariance of the Laplace operator, we conclude that
\[
\capacity^{\Delta}(I, B_{4r}) \simeq \capacity^{\Delta}(I_i, B_{4r}),\quad i=1,2,3.
\]
Applying \eqref{eq1745wed}, we conclude that $\capacity(I, B_{4r}) \ge C$ for some constant $C>0$ independent of $r$.
This completes the proof.
\end{proof}

Modifying a result from classical potential theory (see \cite[Theorem 5.5.9]{AG2001}), we can significantly generalize the (uniform) exterior line segment condition in Proposition \ref{prop0115} and obtain the following theorem.

\begin{theorem}		\label{thm0738}
Assume that Conditions \ref{cond1} and \ref{cond2} hold.
Let $\Omega$ be a bounded regular domain in $\mathbb{R}^2$, and let $f \in C(\partial \Omega)$.
Consider the Dirichlet problem
\[
Lu=0 \;\text{ in }\;\Omega, \quad u=f \;\text{ on }\;\partial\Omega,
\]
where $u \in W^{2, p_0/2}_{\rm loc}(\Omega) \cap C(\overline\Omega)$ is the solution.
\begin{enumerate}[leftmargin=*]
\item[(i)]
Suppose $x_0 \in \partial \Omega$ is in a connected component of $\mathbb{R}^2 \setminus \Omega$ that contains at least one other point distinct from $x_0$.
If $f$ is H\"older continuous at $x_0$, then $u$ is also H\"older continuous at $x_0$, possibly with a different H\"older exponent.
\item[(ii)]
Suppose there exists a constant $r_0>0$ such that every point $x_0 \in \partial \Omega$ belongs to a connected component of $\mathbb{R}^2 \setminus \Omega$ that contains a point at least  $r_0$ away from $x_0$.
If $f \in C^{\beta}(\partial\Omega)$ for some $\beta \in (0,1)$, then $u \in C^\alpha(\overline\Omega)$ for some $\alpha \in (0,\beta)$.
\end{enumerate}
\end{theorem}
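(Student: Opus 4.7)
The plan is to deduce both parts of the theorem from the abstract capacity criterion of Proposition \ref{prop_holder}, which by Theorem \ref{thm1939} can be verified equivalently for the Laplacian capacity $\capacity^\Delta$ in place of $\capacity$. Specifically, for part (i) it suffices to show that at the given point $x_0$,
\[
\capacity^\Delta(\overline{B_t(x_0)} \setminus \Omega,\, B_{4t}(x_0)) \geq c_0 > 0
\]
for all sufficiently small $t>0$, verifying \eqref{eq_rc1}; for part (ii), the same lower bound will hold uniformly in $x_0 \in \partial \Omega$ for $t \in (0, r_0)$, verifying \eqref{eq_rc2}. Once these capacity bounds are established, the H\"older conclusions follow directly from Proposition \ref{prop_holder}.

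The key technical input is the following scale-invariant lemma from classical planar potential theory: there exists an absolute constant $c_0 > 0$ such that whenever $K \subset \overline{B_t(x_0)}$ is a continuum meeting $\partial B_t(x_0)$, one has $\capacity^\Delta(K, B_{4t}(x_0)) \geq c_0$. This can be obtained by combining the classical estimate $\operatorname{cap}_{\log}(K) \geq (\diam K)/4$ for a planar continuum with the standard relation between logarithmic capacity and the relative Laplacian capacity on a disk; equivalently, one can adapt \cite[Theorem~5.5.9]{AG2001} directly to the relative-capacity setting used here, using the Green's function bounds of Theorem \ref{thm_green_function} together with Lemma \ref{lem0956fri} as the bridge.

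To apply this lemma, let $C$ denote the connected component of $\mathbb{R}^2 \setminus \Omega$ containing $x_0$, and let $y_0 \in C$ with $y_0 \neq x_0$ (in case (ii), with $\abs{y_0 - x_0} \geq r_0$). For $0 < t < \abs{y_0 - x_0}$, let $K_t$ denote the connected component of $C \cap \overline{B_t(x_0)}$ that contains $x_0$. I will invoke the standard topological fact that, in a compact metric space, a connected component of a closed set which fails to meet the boundary of a prescribed neighborhood admits a clopen separation from the remainder of the set; since $C$ connects $x_0$ to the point $y_0$ lying outside $\overline{B_t(x_0)}$, no such separation is possible, so $K_t$ must meet $\partial B_t(x_0)$. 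Hence $K_t \subset \overline{B_t(x_0)} \setminus \Omega$ is a continuum satisfying the hypotheses of the key lemma, which yields the desired uniform capacity lower bound, proving \eqref{eq_rc1} in case (i) and, after noting that the entire construction is uniform in $x_0$ when $\abs{y_0 - x_0} \geq r_0$, \eqref{eq_rc2} in case (ii).

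The principal obstacle I expect is the rigorous derivation of the continuum capacity lower bound in the relative-capacity normalization used in this paper: although the underlying fact is classical, standard references formulate it in terms of logarithmic capacity, so the conversion to $\capacity^\Delta(\cdot,\, B_{4r})$ must be carried out carefully. Everything else is either a short topological argument or a direct invocation of results already established in the paper.
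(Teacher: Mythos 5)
Your proposal is correct and reduces the theorem to the same ingredients the paper uses: the capacity criterion of Proposition \ref{prop_holder} (transported to $\capacity^\Delta$ via Theorem \ref{thm1939}), plus a lower bound on the relative Laplacian capacity of the continuum $\overline{B_t(x_0)} \setminus \Omega$. Your option (b) for the continuum bound — adapting \cite[Theorem 5.5.9]{AG2001} to the relative-capacity setting, bridged by Theorem \ref{thm_green_function} and Lemma \ref{lem0956fri} — is precisely what the paper does: it shows the circular projection onto a ray decreases the Green's function of the disk, hence $\capacity^\Delta(I, B) \leq \capacity^\Delta(E, B)$ for a segment $I$ obtained by projecting the continuum $E$, and then appeals to Proposition \ref{prop0115} for the line-segment bound. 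Your option (a), via the classical estimate $\operatorname{cap}_{\log}(K) \geq (\diam K)/4$ for a planar continuum together with the comparison between logarithmic capacity and condenser capacity, is a legitimate alternative route that avoids the circular-projection computation, at the cost of importing a normalization-conversion argument that the paper chooses to handle internally through its own Green's function estimates. Your topological boundary-bumping argument (showing the component of $C \cap \overline{B_t(x_0)}$ through $x_0$ must reach $\partial B_t(x_0)$) is actually spelled out more carefully than in the paper, which simply posits the existence of a compact connected $E \subset \mathbb{R}^2 \setminus \Omega$ containing $x_0$ and $x_1$.

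One small imprecision: your key lemma is stated for ``a continuum $K \subset \overline{B_t(x_0)}$ meeting $\partial B_t(x_0)$,'' but a continuum that merely touches $\partial B_t$ could have arbitrarily small diameter and hence small capacity. What you actually need — and actually have, since $K_t$ contains $x_0$ — is that $K$ joins the center to the boundary, so $\diam K \geq t$; state the lemma with that hypothesis. With that correction, the proposal is sound.
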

\begin{proof}
Let $x_1 \neq x_0$ be a point in $\bR^2 \setminus \Omega$.
Let $I$ be the line segment connecting $x_0$ and $x_1$, and let $E \subset \bR^2\setminus \Omega$ be a compact, connected set containing both $x_0$ and $x_1$.
We will show that
\[
\capacity^\Delta(I, B) \le \capacity^\Delta(E, B),
\]
where $B=B_{4r}(x_0)$ with $r\ge \abs{x_0-x_1}$.
Then, the theorem follows from Propositions \ref{prop_holder} and \ref{prop0115}.

Without loss of generality, assume $x_0=0$ and that $x_1$ lies on the nonnegative $x$-axis in the $xy$-coordinate system, with $0< \abs{x_1} \le \frac{1}{4}$ and $B=B_1(0)$.
Let $f$ be the circular projection map of $\bR^2$ onto the nonnegative $x$-axis, defined by
\[
f(r\cos \theta, r\sin \theta)=(r,0), \quad\text{where }\; r\in [0,\infty),\; \theta \in [0,2\pi).
\]
Clearly, $f$ is a contraction, i.e.,
\[
\abs{f(x)-f(y)} \le \abs{x-y}.
\]
We closely follow the proof of \cite[Theorem 5.5.9]{AG2001}.
Let $G(x,y)$ denote the Green's function in $B=B_1(0)$, given by
\[
G(x,y)=-\frac{1}{2\pi} \log \abs{y-x}+ \frac{1}{2\pi} \log ( \abs{x}\,\abs{y-x^*}),\quad\text{where }\; x^*=\frac{x}{\abs{x}^2}.
\]
To proceed, it suffices to show that
\begin{equation}			\label{eq1632mon}
G(f(x), f(y)) \ge G(x,y),\quad x \neq y \in B.
\end{equation}
If this holds, the proof follows the same structure as \cite[Theorem 5.5.9]{AG2001}, with the same arguments applying but with $G(x,y)$ in place of $\abs{x-y}^{2-d}$ throughout.

By the rotational invariance of the Green's function, we have $G(0,y)=G(0,f(y))$,  so \eqref{eq1632mon} is clear when  $x=0$ (or $y=0$ by symmetry).
Moreover, by rotational invariance, we may assume in \eqref{eq1632mon} that $x$ lies on the positive $x$-axis, so that $f(x)=x$.
Since 
\[
G(x,y)= -\frac{1}{2\pi} \log\left(\frac{\abs{y-x}}{\abs{y-x^*}} \right)+\frac{1}{2\pi} \log \abs{x},
\]
it suffices to show that if $x \in B_1(0)$ lies on the positive $x$-axis and $\hat y$ is the circular projection of $y \in B_1(0) \setminus \{0\}$ onto the positive $x$-axis, then
\begin{equation}			\label{eq1838mon}
\frac{\abs{y-x}}{\abs{y-x^*}} \ge  \frac{\abs{\hat y-x}}{\abs{\hat y-x^*}}.
\end{equation}
Since $x$ and $x^*=x/\abs{x}^2$ lie on the positive $x$-axis, \eqref{eq1838mon} follows from elementary plane geometry.
This completes the proof.
\end{proof}

\begin{remark}			\label{rmk2001mon}
Theorem \ref{thm0738} implies that if $x_0 \in \partial \Omega$ lies in a connected component of $\mathbb{R}^2 \setminus \Omega$ that contains at least one other point distinct from $x_0$, then the Green's function $G_\Omega(\,\cdot,y)$ is H\"older continuous at $x_0$.
In particular, if there exists a constant $r_0>0$ such that every point $x_0 \in \partial \Omega$ belongs to a connected component of $\mathbb{R}^2 \setminus \Omega$ that contains a point at least  $r_0$ away from $x_0$, then $G(\,\cdot, y)$ is H\"older continuous in $\overline \Omega \setminus B_r(y)$ for any $r>0$.
Moreover, in this setting, we have the estimate
\[
G(x,y) \le C \left(1\wedge \frac{d(x)}{\abs{x-y}}\right)^\alpha \left\{1+ \log \left(\frac{\diam \Omega}{\abs{x-y}}\right)\right\},
\]
for some $\alpha>0$. See the proof of \cite[Theorem 5.2]{CDKK24} for reference.
Furthermore, \cite[Theorem 1.1]{DK21} and the proof of Theorem \ref{thm1127sat} imply that $G(\,\cdot,y)$ is locally BMO.
\end{remark}


\end{document}